\newtheorem{theorem}{Theorem}
\newtheorem{definition}[theorem]{Definition}
\newtheorem{lemma}[theorem]{Lemma}
\newtheorem{corollary}[theorem]{Corollary}
\newtheorem{proposition}[theorem]{Proposition}
\newcommand{\vv}[1]{\overrightarrow{#1}}
\providecommand*{\cupdot}{%
  \mathbin{%
    \mathpalette\@cupdot{}%
  }%
}
\newcommand*{\@cupdot}[2]{%
  \ooalign{%
    $\m@th#1\cup$\cr
    \hidewidth$\m@th#1\cdot$\hidewidth
  }%
}
\title{Structure and enumeration of $K_4$-minor-free\\ links and link-diagrams\thanks{J.Ru\'e was partially supported by the Spanish MICINN project MTM2017-82166-P and the Mar\'ia de Maetzu research grant MDM-2014-0445 (BGSMath). D.M.Thilikos was supported by the projects ``ESIGMA" (ANR-17-CE23-0010) and ``DEMOGRAPH'' (ANR-17-CE23-0010). V.Velona was supported by the Spanish Ministry of Economy and Competitiveness, Grant MTM2015-67304-P and FEDER, EU. Partially supported by the MINECO project MTM2017-82166-P and the Mar\'ia de Maetzu research grant MDM-2014-0445 (BGSMath).}}
\author{
Juanjo Rué\thanks{Departament de Matemàtiques, Universitat Politècnica de Catalunya and Barcelona Graduate School of Mathematics, Barcelona, Spain. Email: {\tt juan.jose.rue@upc.edu}}
\and
Dimitrios M. Thilikos\thanks{LIRMM, Univ Montpellier, CNRS, Montpellier, France. Email: {\tt sedthilk@thilikos.info}}
\and
Vasiliki Velona\thanks{Department of Economics, Universitat Pompeu Fabra, and Departament de Matemàtiques, Universitat Politècnica de Catalunya, and Barcelona Graduate School of Mathematics, Spain.   Email: {\tt vasiliki.velona@upf.edu}}
}
\begin{document}

\maketitle

\begin{abstract}
 \noindent  We study the class  $\mathcal{L}$ of link-types that admit a $K_4$-minor-free diagram, i.e.,  they can be projected on the plane so that the resulting graph does not contain any subdivision of  $K_{4}$. We prove that $\mathcal{L}$ is  the closure of a subclass of torus links under the operation of connected sum. Using this structural result, we enumerate $\mathcal{L}$ (and subclasses of it), with respect to the minimum number of crossings or edges in a projection of $L\in \mathcal{L}$. Further, we obtain counting formulas and asymptotic estimates for the connected $K_4$-minor-free link-diagrams, minimal $K_4$-minor-free link-diagrams, and $K_4$-minor-free diagrams of the unknot.
\end{abstract}

\noindent{\bf Keywords:} series-parallel graphs, links,
knots, generating functions, asymptotic enumeration, map enumeration.

\section{Introduction}

The exhaustive generation of knots and links according to their crossing number is a well-established problem in low dimensional geometry. For an account, see~\cite[Chapter 5]{menasco2005handbook}. In the last decades, there has also been interest in properties of random knots and links and their models, as well as random generation of them; see for instance~\cite{chapman2017asymptotic,buck1994random,diao1994random,even2016invariants} or \cite[Chapter 25]{calvo2005physical}. In parallel, various combinatorial and algorithmic questions of more deterministic nature have been addressed, for example in \cite{adams2015knot,chang2017untangling,medina2017number}.

However, it appears that there are very few enumerative results of knots and links in the combinatorics literature. In fact, they are relatively recent and related to the enumeration of prime alternating links, such as~\cite{sundberg1998rate} and~\cite{kunz2001asymptotic}.
Moreover, it seems that there are no known results on interconnections between  graph-theoretic classes and link classes.

The present paper contributes in this direction. We present both enumerative and structural results, the latter relating in a precise way a fundamental class of links, torus links, with the family of \emph{series-parallel} graphs\footnote{We adopt the following definition for series parallel graphs: A graph is \emph{series-parallel} if it can be obtained
from a double edge after a series of edge subdivisions or duplications.} and, more generally, graphs that exclude $K_4$ as a minor ($K_4$-minor-free graphs). The latter is an extensively studied graph class. For instance, it is known that $K_4$-minor-free graphs are exactly the graphs with treewidth at most $2$, while a graph is  $K_{4}$-minor free if and only if all its non-trivial 2-connected 
 components are series-parallel graphs~\cite{bodlaender1998partial},\cite{brandstadt1999graph}. Enumerative results for series-parallel graphs are  available in~\cite{bodirsky2007enumeration}.

Before stating the results, let us give a few preliminary definitions. A \emph{knot} is a smooth embedding of the 1-dimensional sphere $\mathbb{S}^1$ in $\mathbb{R}^3.$
A \emph{link} is a finite disjoint union of knots. A standard way to associate links to graphs is to represent them via {\sl  link-diagrams} that are their
projections to the plane. That way, link-diagrams are  seen as  4-regular maps,
where each vertex corresponds to a crossing of the link with itself and where we mark  the pair of opposite edges that is overcrossing the other. Notice that link-diagrams may contain vertex-less edges.
Clearly, any link has arbitrarily many different link-diagrams. A {\em minimal} link-diagram is one with the minimum possible number of vertices, for the link $L$ it represents. This number is called {\em crossing number}
of $L$.

Our first result is a complete structural characterisation of  {$K_{4}$-minor-free} links, i.e., links that admit some {$K_{4}$-minor-free} link-diagram, via a decomposition theorem (Theorem~\ref{acceptance}) derived after a series of graph-theoretic lemmata. Using this decomposition and analytic techniques of generating functions, we are able to deal with a series of  enumeration problems.

Denote by ${\cal L}$ the set of all $K_{4}$-minor-free link-types. Among them, we distinguish the
subset $\overline{\cal L}$ of the non-split links (i.e., those without disconnected link-diagrams), the subset $\hat{\cal L}$ of links without trivial disjoint components (i.e., those without link-diagrams with vertex-less edges), and the subset ${\cal K}$ of  knots in ${\cal L}$.
For each object in a set of links, we denote by $n$ (resp. $m$) the number of vertices (resp. edges) of a minimal diagram and we define the combinatorial classes $({\cal L},m)$, $(\overline{\cal L},n)$, $(\hat{\cal L},n)$, and $({\cal K},n)$. $\mathcal{L}$ is considered with respect to the number of edges in a minimal diagram, in order to account for the number of trivial components that project to vertex-less edges.

Our enumerative results on link-types are the following. First, for both classes $(\overline{\cal L},n)$ and $(\hat{\cal L},n)$, it holds that the number of elements in the class with $n$ vertices
has asymptotic growth of the form
\begin{align*}
C \cdot n^{-5/2}\cdot \rho^{-n}.\label{unalloyed}
\end{align*}
In both cases $\rho\approx 0.44074$ and the constant $C$ is approximately equal to $ 9.92890$ and $24.96355$  for $\overline{\cal L}$ and  $\hat{\cal L}$ respectively (see Theorem~\ref{transform}).

For the class $({\cal L},m)$, we have to distinguish between even and odd $m$. In both cases the exponential growth is the same and equal to $\sqrt{\rho}$, but the constant $C'$ changes. For even $m$, $C'\approx 251.44816$, and for odd $m$, $C'\approx 166.93220$ (see Corollary~\ref{doublelife}).
%
Finally, the class $({\cal K},n)$ has asymptotic growth of the form
\begin{equation*}C''\cdot n^{-7/4}\cdot \exp (\beta n^{1/2}),\label{prospects}\end{equation*}
where $C''\approx 0.26275$ and $\beta \approx 2.56509$ (see Theorem~\ref{elsewhere}).
The latter follows by manipulation of asymptotic estimates of different classes of integer patitions.

Our next set of results concerns the enumeration of
link-diagrams. Let ${\cal M}$ be the set of all connected $K_{4}$-minor-free link-diagrams, ${\cal M}_{1}$ be the set all minimal connected $K_{4}$-minor-free link-diagrams, and let ${\cal M}_{2}$ be the set of all connected $K_{4}$-minor-free link-diagrams of the unknot. For a link-diagram $R$, we denote by $m(R)$ its number of edges.
We define then the combinatorial classes $({\cal M},m)$, $({\cal M}_1,m)$, and $({\cal M}_2,m)$. We obtain that all these three combinatorial classes follow an asymptotic growth of the form
\begin{align*}
\frac{1}{2m}\cdot C_{l}\cdot m^{-3/2}\cdot\rho^{-m},\label{pursuance}
\end{align*}
%
%
where the constants can be found in Table~\ref{results2}.
\begin{center}\label{results2}
\begin{tabular}{c|c|c}
  Family of diagrams & $\rho$  & $C_l$    \\
     \hline\\[-1em]
       All & 0.31184 & 0.85906  \\
  Minimal & 0.41456 &  0.45938  \\
  Unknot & 0.23626 &  0.95896\\
\end{tabular}
\end{center}

Our strategy to obtain these results has as starting point the equations given in \cite{noy2019enumeration} for 4-regular graphs in the rooted map context. We refine significantly these equations in order to tackle the main difficulty here, which is the crossing structure of our link-diagrams. We first obtain defining systems of equations for the rooted analogues of the aforementioned classes and analyse the corresponding asymptotic behaviour. Then we use techniques from~\cite{richmond1995almost} and~\cite{bender1992submaps}, in order to transfer these results to the unrooted map classes under study.

\paragraph{Structure of the paper.} In Section \ref{universal} we introduce all topological notions and definitions in knot theory that we will use in the rest of the paper. Similarly, in Section \ref{underrate} we state the preliminaries needed for combinatorial enumeration, and in Section~\ref{analysis} we resume most of the analytic tools needed to provide asymptotic estimates. In Section~\ref{monuments}
we prove our structural result for $K_4$-free links and in Section~\ref{enumsection} their enumeration, both exact (by means of generating functions) and asymptotic. In Section \ref{according} we provide enumerative formulas for different kinds of rooted link-diagrams, using tools from map enumeration.
In Section \ref{unrooting} we transfer these results in the unrooted setting. Finally, in Section 7 a list of open problems is discussed.

\paragraph{Note.} All the computations in this paper were performed in \texttt{Maple}. The \texttt{Maple} session can be found in \texttt{https://mat-web.upc.edu/people/juan.jose.rue/research.html}.

\color{black}

\section{Preliminaries}

\subsection{Graph-theoretic Preliminaries}\label{underrate}

All graphs in this paper are multigraphs, i.e., they may have loops or multiple edges.
Given a graph $G=(V,E)$ and a vertex $v\in V$, we denote by $N_G(v) \subseteq V$ the set of neighbors of $v$.
For a vertex subset $A\subseteq V$ we denote by $G-A$ the graph obtained from $G$ by removing the vertices in $A$ and all edges incident with vertices in $A$. Similarly, for a set $B \subseteq E$, we denote by $G-B$ the graph obtained from $G$ by removing the edges in $B$.

A graph $G$ is \emph{$k$-vertex connected} (or shortly, \emph{$k$-connected}) if it has more than $k$ vertices and, if $A$ is a subset of $V$ of size strictly smaller than $k$, then $G-A$ is always connected.
Similarly, a graph $G$ is \emph{$k$-edge connected} if it has more than $k$ edges and, if $B$ is a subset of $E$ of size strictly smaller than $k$, then $G-B$ is always connected.

We say that a graph $H$ is a {\em subdivision} of a graph $G$ if $G$ can be obtained from $G$ by replacing some of its edges with paths having the same endpoints.
Given two graphs $H$ and $G$, we say that $H$ is a \emph{topological minor} of $G$ if it contains as a subgraph some subdivision of $H$.  If $G$ does not contain $H$ as a topological minor, then we say that $G$ is {\em $H$-topological minor free}. $H$ is a {\em minor} of $G$
if $H$ can be obtained from some subgraph of $G$
after contracting edges. It is easy to see that $K_{4}$
is contained as a minor if and only if it is contained as a topological minor. Therefore, $K_{4}$-topological minor free graphs are exactly the $K_{4}$-minor free graphs.

A graph is \emph{outerplanar} if it can be embedded in the plane in such a way that all vertices lie on the outer face. Equivalently, it does not contain a subdivision of $K_4$ or $K_{2,3}$. (see~\cite{harary6graph}).

For every $n\geq 3$, we denote by $\hat{C}_{n}$ the multigraph
obtained if in a cycle of $n$ vertices we replace all edges by double edges. We extend this definition so that $\hat{C}_{2}$ is the graph consisting of two vertices connected with an edge of multiplicity 4, $\hat{C}_{1}$ is a vertex with a double loop, and by convention we say that $\hat{C}_{0}$ is the vertex-less edge (that is the edge without endpoints).

\subsection{Preliminaries for knots and links}\label{universal}

A \emph{knot} $K$ is a smooth embedding of the 1-dimensional sphere $\mathbb{S}^1$ in $\mathbb{R}^3.$
A \emph{link} is a finite union of knots that are pairwise non-intersecting $L=K_1\cup \cdots \cup K_\mu$.
In this situation, each knot $K_i$ is called a \emph{component} of the link $L$. 
Note that there are alternative formulations in the literature \cite[Ch. 1]{cromwell2004knots}, either using polygonal knots or the notion of local flatness, which are equivalent to the one we use.

Two links $L_1$ and $L_2$ are said to be \emph{ambient isotopic} (or equivalent) if there is a continuous map
$h:\mathbb{R}^3\times[0,1]\rightarrow \mathbb{R}^3$, such that, for all $t\in [0,1]$, $h(x,t)$ is a homeomorphism and $h(L_1,0)=L_1$, $h(L_1,1)=L_2$. We then say that $L_1$ and $L_2$ have the same \emph{type} and write $L_1 = L_2$. Note that ambient isotopies preserve the orientation of $\mathbb{R}^3$.

A link equivalent to a set of disjoint circles in the plane is called a \textit{trivial link}. Likewise, a knot equivalent to a circle is called the \textit{trivial knot} or the \textit{unknot}. Two components $C_1,C_2$ of a link $L$ will be called {\em equivalent} if there is an ambient isotopy that maps $L$ to itself and $C_1$ to $C_2$. The latter is an equivalence relation on the components of the link.

\paragraph{Decomposition of links.}

Given two links $L_1,L_2$, their \emph{disjoint sum} is obtained by embedding $L_1$ in the interior of a standard sphere and $L_2$ in the exterior. We denote the resulting link by $L_1 \cupdot L_2$ and call each $L_1,L_2$ a \emph{disjoint component} of $L$. A link -- and, accordingly, all members of its equivalence class -- is \emph{split} if it is the disjoint sum of two links.
\begin{figure}[h!]
\centering \includegraphics[scale=.7]{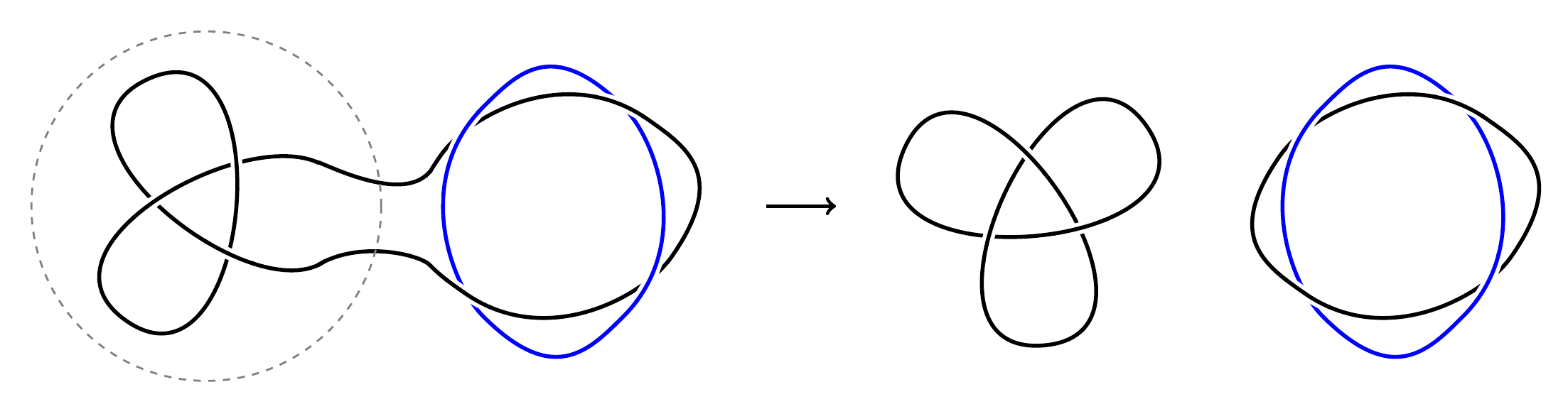}
\caption{A connected sum, corresponding to the links $T(3)$ and $T(-4)$.}\label{frivolity}
\end{figure}
Consider a link $L$ and the sphere $\mathbb{S}^2$ embedded in such a way that it meets the link transversely in exactly two points $P_1$ and $P_2$.
Then we discern two different links $L_1,L_2$ when connecting $P_1$ and $P_2$. The first corresponds to the part of $L$ in the interior of the sphere and the second to the part in the exterior. We then say that $L$ is a \textit{connected sum} with \textit{factors} $L_1,L_2,$ denoted $L_1\# L_2$ (see Figure~\ref{frivolity} for an example). A factor of a link is a \emph{proper factor} if it is not the trivial knot and is not equivalent to the link itself. A link with proper factors is called \emph{composite}. Otherwise, it is called \emph{locally trivial}. Finally, a link is called \emph{prime} if it is non-trivial, non-split, and locally trivial.

  The following two theorems are well known in Knot Theory:
\begin{theorem}[{\cite[Theorem 3.2.1]{kawauchi2012survey}}]
Let $L$ be a link such that $L=L_1\#L_2$ for two links $L_1$ and $L_2$. Then $L$ is trivial if and only if both links $L_1$ and $L_2$ are trivial.
\end{theorem}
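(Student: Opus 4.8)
The plan is to prove the two implications separately: the forward implication is elementary, while the reverse one is the classical ``innermost disk'' argument underlying the uniqueness of the trivial factor of a connected sum.

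For the ``if'' direction, suppose both $L_1$ and $L_2$ are trivial. Realise $L_1$ as a union of $\mu_1$ round circles in a plane, positioned so that the decomposing ball meets the relevant component in a round subarc and is disjoint from the other components, and do the same for $L_2$; performing the connected sum construction then visibly produces the $(\mu_1+\mu_2-1)$-component trivial link. Since the connected sum of two fixed links along fixed components is well defined up to equivalence, it follows that every connected sum of trivial links is trivial.

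For the ``only if'' direction, assume $L=L_1\#L_2$ is trivial and fix the decomposing sphere $S$ from the definition: $S$ meets $L$ transversely in two points $P_1,P_2$ and bounds a ball $B$, with $S^3=B\cup B'$, such that $L_1=(L\cap B)\cup\beta$ for some unknotted ``capping'' arc $\beta\subseteq B'$ joining $P_1$ to $P_2$ (and symmetrically for $L_2$). The points $P_1,P_2$ lie on a single component of $L$, since each component meets the separating sphere $S$ an even number of times; because $L$ is trivial it bounds a system $\Delta=D_1\sqcup\dots\sqcup D_\mu$ of pairwise disjoint embedded disks, and after relabelling we may assume $P_1,P_2\in\partial D_1$, while $\partial D_i\cap S=\emptyset$ for $i\ge 2$. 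Put $\Delta$ in general position with respect to $S$, so that $S\cap\Delta$ is a finite union of simple closed curves lying in the interiors of the disks, together with exactly one properly embedded arc $\alpha\subseteq D_1$ with $\partial\alpha=\{P_1,P_2\}$. Now apply the innermost-disk exchange: if $S\cap\Delta$ contains a circle, choose one that is innermost on $S$; it bounds a disk $E\subseteq S$ whose interior is disjoint from $\Delta$, and it also bounds a subdisk $E'$ of some $D_i$, and replacing $E'$ by a pushoff of $E$ yields a new system of pairwise disjoint embedded disks spanning $L$ with strictly fewer circles of intersection with $S$. Iterating, we may assume $S\cap\Delta$ is exactly the single arc $\alpha$ and that $S$ is disjoint from $D_2,\dots,D_\mu$. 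The arc $\alpha$ cuts $D_1$ into two subdisks; let $D_1'$ be the one lying in $B$. Pushing $\alpha$ slightly off $S$ into $B'$ gives a boundary-parallel, hence unknotted, arc joining $P_1$ to $P_2$, which may therefore serve as the capping arc $\beta$ in the definition of $L_1$; with this choice
\[
L_1=(L\cap B)\cup\alpha=\partial D_1'\ \sqcup\ \bigsqcup_{\substack{i\ge 2\\ D_i\subseteq B}}\partial D_i,
\]
a disjoint union of boundaries of the pairwise disjoint embedded disks $D_1'$ and the $D_i\subseteq B$. Hence $L_1$ is a trivial link, and the symmetric argument with $B'$ shows that $L_2$ is trivial.

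The step I expect to require the most care is making the innermost-disk surgery rigorous: working in the smooth (or PL) category, justifying general position and finiteness of $S\cap\Delta$, verifying that each exchange keeps the disk system embedded and pairwise disjoint while strictly decreasing the number of intersection circles, and checking that the procedure terminates in the single-arc configuration. One also uses the essentially definitional facts that a link is trivial exactly when it bounds a disjoint union of embedded disks and that an embedded $2$-disk in $S^3$ has unknotted boundary. We note that for knots one can instead invoke additivity of the Seifert genus under connected sum; this shortcut is unavailable for links, since a nontrivial link can bound a connected Seifert surface of genus zero, so the surface-and-sphere argument above seems to be needed in the stated generality.
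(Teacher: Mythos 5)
The paper does not prove this statement: it is imported verbatim as Theorem 3.2.1 of Kawauchi's \emph{A survey of knot theory}, so there is no in-paper argument to compare yours against. Judged on its own, your proof is the standard one and looks correct: the ``if'' direction via an explicit planar model plus well-definedness of the connected sum (legitimate here because trivial links are invertible and all their components are equivalent), and the ``only if'' direction via general position and innermost-disk exchanges to arrange that the spanning disks meet the decomposing sphere in a single arc $\alpha$, after which $\alpha$ serves as the capping arc and each factor visibly bounds a disjoint union of embedded disks. The points you flag yourself are indeed the only places needing care --- in particular, that each exchange keeps the system embedded and pairwise disjoint (the innermost disk $E$ has interior disjoint from all of $\Delta$, including the arc component, so this works), that the arc $\alpha$ locally separates $D_1$ into a $B$-side and a $B'$-side because $D_1$ crosses $S$ transversely along $\alpha$, and the standard characterization of trivial links as exactly those bounding disjoint embedded disks. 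None of these is a gap, and the argument is in the stated generality of links rather than knots, as required.
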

\begin{theorem}[{\cite[Theorem 3.2.6]{kawauchi2012survey}}]
A non-split link can be decomposed into finitely many prime links with respect to the connected sum. Moreover, the decomposition is unique in the following sense: If $L_1\# L_2\# \cdots \# L_m = L^{'}_1\# L^{'}_2\# \cdots \# L^{'}_n$ for prime links $L_i\,(i = 1,2,\dots , m)$ and $L^{'}_j \,(j = 1,2, \dots , n)$, then we have $m = n$ and for each $i\in[n]$ and some permutation $\sigma$ of $[n]$,
$L_i = L^{'}_{\sigma(i)} $.\label{masochism}
\end{theorem}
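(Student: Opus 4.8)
This is the link version of Schubert's prime decomposition theorem, and the plan is to give the classical proof via sphere systems and innermost-disk surgery, compactifying $\mathbb{R}^3$ to $\mathbb{S}^3$ throughout. Call a smoothly embedded $2$-sphere $S\subset\mathbb{S}^3$ meeting $L$ transversely in two points a \emph{decomposing sphere}, and call it \emph{essential} if neither resulting factor is the trivial knot. For existence I would argue that a non-split composite $L$ has, by definition, an essential decomposing sphere, and that cutting along it and capping the two sides with trivial arcs yields factors $L_1,L_2$ of strictly smaller complexity; termination of the recursion then follows from additivity of a suitable invariant under $\#$ --- for instance $b(L_1\#L_2)=b(L_1)+b(L_2)-1$ for the bridge number, or additivity of the Seifert genus, or the fact that an essential sphere strictly decreases the minimal crossing number --- so the process stops at a finite prime decomposition.

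For uniqueness I would realise the two decompositions by families of pairwise disjoint essential decomposing spheres $\mathcal{S}=\{S_1,\dots,S_{m-1}\}$ and $\mathcal{S}'=\{S'_1,\dots,S'_{n-1}\}$ in the pair $(\mathbb{S}^3,L)$, put them in general position, and isotope $\mathcal{S}'$ --- keeping it transverse to $\mathcal{S}$ and to $L$ --- so as to minimise the number of circles in $\mathcal{S}\cap\mathcal{S}'$ (these circles are automatically disjoint from $L$). The aim is to show this minimum is $0$. Assuming not, let $c\subset\mathcal{S}\cap\mathcal{S}'$ be innermost on its sphere $S'\in\mathcal{S}'$, bounding a disk $D\subset S'$ with $\mathrm{int}(D)\cap\mathcal{S}=\emptyset$; an innermost reduction inside $D$ lets one also assume $D\cap L=\emptyset$, using that any $2$-sphere meets the closed $1$-manifold $L$ in an even number of points, so a subdisk cutting off at most one puncture bounds a product region that can be pushed away. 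The circle $c$ bounds two disks $D',D''$ on the sphere $S\in\mathcal{S}$ through it, carrying $2$ punctures between them; pick $D'$ with $|D'\cap L|\le 1$, hence $=0$ by parity. Then $\Sigma=D\cup_c D'$ is a $2$-sphere disjoint from $L$, so by the Schoenflies theorem it bounds balls on both sides and, $L$ being non-split, bounds a ball $B$ missing $L$. Isotoping $S$ across $B$ removes $c$ without creating new intersection circles, contradicting minimality; hence $\mathcal{S}\cap\mathcal{S}'=\emptyset$.

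With $\mathcal{S}$ and $\mathcal{S}'$ disjoint, each sphere of $\mathcal{S}'$ lies in one closed region of $\mathbb{S}^3\setminus\mathcal{S}$, and such a region, capped off, is one of the prime factors $L_i$; by primeness of $L_i$ an essential decomposing sphere inside it must be parallel to a bounding sphere of the region. So after discarding parallel duplicates, $\mathcal{S}$ and $\mathcal{S}'$ are isotopic as families, which forces $m=n$ and $L_i=L'_{\sigma(i)}$ for a suitable permutation $\sigma$ of $[n]$; combined with existence this proves the theorem.

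The step I expect to be the main obstacle is the puncture bookkeeping in the surgery: arranging that $D$ can be taken disjoint from $L$ and that the chosen half $D'$ of the $\mathcal{S}$-sphere is unpunctured, and then verifying that the isotopy across $B$ really does not increase $|\mathcal{S}\cap\mathcal{S}'|$. The final paragraph also rests on the fact that a decomposing sphere in a prime link is unique up to isotopy --- this is exactly where primeness is used, and it is the point that would need the most care in a complete write-up.
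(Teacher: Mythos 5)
First, a point of order: the paper does not prove this statement. It is imported verbatim from Kawauchi~\cite{kawauchi2012survey} (the link version of Schubert's prime decomposition theorem, due originally to Schubert and Hashizume) and used as a black box, so there is no in-paper argument to measure yours against. What follows is therefore an assessment of your sketch on its own terms.

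Your architecture is the standard one --- essential decomposing spheres plus an additive complexity for existence, then an innermost-circle argument to make two sphere systems disjoint and primeness to match them up --- and as an outline it is sound. Two concrete problems, though. (a) For termination, the option ``an essential sphere strictly decreases the minimal crossing number'' is not available: additivity of the crossing number under connected sum is a well-known open problem, and this very paper has to cite Diao separately just to get it for torus links. Seifert genus also fails for links (the Hopf link is nontrivial of genus zero, so genus need not strictly decrease). Bridge number is the one that works. (b) The innermost-disk step has a genuine hole in the one-puncture case. If the innermost disk $D\subset S'$ meets $L$ in exactly one point, then parity forces \emph{both} disks $D',D''\subset S$ bounded by $c$ to carry exactly one puncture each, so your ``pick $D'$ with $|D'\cap L|\le 1$, hence $=0$ by parity'' breaks down, and $\Sigma=D\cup_c D'$ is a decomposing sphere rather than a sphere disjoint from $L$; the Schoenflies/non-splitness argument then says nothing. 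Your proposed fix --- pushing the puncture off $D$ by an ``innermost reduction'' --- is not legitimate, since $D$ is innermost only with respect to intersection circles and $L$ genuinely crosses $S'$ at that point. The standard repair is different in kind: $\Sigma$ lies inside a single complementary region of $\mathcal{S}$, i.e.\ inside one prime factor $L_i$, so by primeness it cuts off an unknotted spanning arc and bounds a ball across which one can isotope to remove $c$. Thus primeness of the pieces, not non-splitness of $L$, is what disposes of the one-puncture case, and it is a separate branch of the argument rather than a refinement of the zero-puncture one. With those two repairs your sketch is a faithful outline of the classical proof.
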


Note that the connected sum of two given knots is only well-defined for oriented knots. However, if they are \emph{invertible}, i.e., are equivalent to themselves with opposite orientation, then it is well defined (see the relevant discussion in~\cite[Ch. 4.6]{cromwell2004knots}). In this case, the connected sum between links is also well-defined, if one specifies the equivalence classes of the components that get connected.

\begin{definition}
Let $\mathcal{L}$ a family of links. We denote by ${\bf dcl}(\mathcal{L})$ the set of finite disjoint sums of links in $\mathcal{L}$. By ${\bf ccl}(\mathcal{L})$, we denote the set of finite connected sums of links in $\mathcal{L}$ that are non-split.
\end{definition}

\paragraph{Maps and link-diagrams.}
We use the term {\em maps} for multigraphs that are embedded
in the sphere and we say that they are {\em 4-regular}
when each vertex is incident to 4 half-edges.

Given a map $G$, we denote its vertex set by $V(G)$ and its edge set by $E(G)$.
Let $G$ a 4-regular map and let $v\in V(G)$.
We denote by $\overline{e}$ the set of points of the plane
corresponding to an edge $e\in E(G)$ and we pick a point
$x_{e}\in \overline{e}$. We call the two connected components of $\overline{e}\setminus \{x_{e}\}$ {\em half-edges of $G$}
corresponding to the edge $e$. We also use the notation $\hat{E}(G)$ to denote the set of half-edges of the
embedding of $G$.
 For every $v\in V(G)$ we denote by $\hat{E}_{v}$ the set of half-edges
containing $v$ in their boundary. Notice  that $\hat{E}_{v}$
is cyclically ordered as indicated by the embedding of $G$.
Two half-edges in $\hat{E}_{v}$ are called {\em opposite}
if they are non-consecutive in this cyclic ordering. Clearly, $\hat{E}_{v}$ contains two pairs of opposite half-edges. A \emph{corner} on a map is the region between two consecutive half-edges around a given vertex.

Two maps are considered to be the same if the first is obtained from the second by a homeomorphism of the sphere which preserves its orientation.
For enumerative purposes, we consider \emph{rooted} maps: a {\em rooted map} is a map with a marked corner; the incident vertex is called the \emph{root vertex}, and the edge following the marked corner in clockwise order around the root vertex is called the \emph{root edge}. Finally, the face that contains the marked corner is the \emph{root face} of the map. Equivalently, a rooted map is defined by orienting an edge in the map (the root vertex corresponds to the initial vertex of the edge) and choosing the root face as the one on the left of the rooted edge.

The next definition introduces the type of enriched maps that we will study in this paper:
\begin{definition}
A {\em link-diagram} is a triple $D=(V,E,\sigma)$, where $G=(V,E)$ is a connected 4-regular map and $\sigma: V(G)\to{\hat{E}(G)\choose 2}$, such that for every $v\in V(G)$, $\sigma(v)$ is a set of two opposite half-edges  of the embedding of $G$.
\end{definition}
We say that a link-diagram $(V,E,\sigma)$ is {\em reduced} if the graph $G=(V,E)$
does not contain any cut-vertex.
Notice that each link-diagram $D=(V,E,\sigma)$ corresponds to a link-type which we denote by $L(D)$.
The link-diagram $D$ is obtained from $L(D)$ by projecting it on the sphere (or equivalently, on the plane).
Moreover, it is a standard fact that for each link-type $L$
there is at least one link-diagram $D$ where $L(D)=L$ (see~\cite[Ch. 3]{cromwell2004knots}).
Given a link-type $L$, we denote by ${\cal D}_{L}$
the set containing every diagram $D$ such that $L(D)=L$.
Let $L$ be a link-type and $D$ a diagram of $L$ with the minimum number of vertices, $n$, over all the  link-diagrams in ${\cal D}_{L}$. $D$ is called a \emph{minimal diagram} and $n$ is called the \emph{crossing number} of $L$.

Finally, we can apply certain local moves on link-diagrams, called \textit{Reidemeister moves}, that do not alter the type of the link.
It is known that, given two link-diagrams that correspond to the same knot, one can be obtained from the other by a sequence of Reidemeister moves \cite{Reidemeister1927}. In Figure~\ref{criterion}, there is a depiction of these moves.

\begin{figure}[h!]
\centering \includegraphics[scale=1.7]{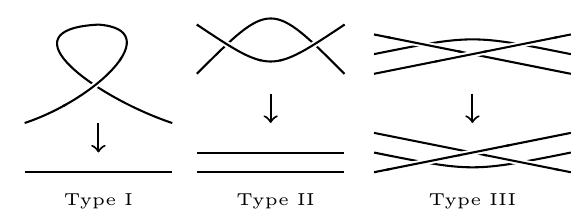}
\caption{The 3 types of Reidemeister moves.}\label{criterion}
\end{figure}

\paragraph{Torus links.}

\emph{Torus links} are links that can be embedded in the standard torus. They are a well-studied class of invertible links. We will give a brief exposition of their properties that we need. For rigorous proofs and general background, the reader is deferred to~\cite[Chapter 7]{murasugi2007knot},~\cite[Chapter 10]{cromwell2004knots},~\cite{kawauchi2012survey}.

Torus links can be classified into types $T(p,q)$, $p,q\in \mathbb{Z}$. We will be interested in types $T(q):=T(2,q)$, which have the following properties. When $q=\pm 1$ or $q=0$, $T(q)$ is the unknot. Otherwise, for $q$ odd it is a prime knot and for $q$ even it is a prime link. In the first case, the crossing number is equal to $q$, while for $q$ even the links have exactly two components. Links of type $T(q)$ can be defined alternatively as links that are embeddable on the torus and cross the meridian cycle two times and the longitude cycle $|q|$ times. The sign of $q$ indicates the two different ways in which the crossings occur, and for all $q>2$, $T(q)\neq T(-q)$. See Figure~\ref{frivolity} for an example of the links $T(3) $ and $T(-4)$.
Note that for every $q$ there is a link-diagram of $T(q)$ with graph $\hat{C}_{|q|}$, which helps to obtain a first visual representation of them.  Moreover, the crossing number of connected sums of torus links is additive~\cite{diao2004additivity}.


\subsection{Analytic tools for combinatorial enumeration}\label{analysis}

Most of the preliminaries in this section can be extensively found in the reference book \cite{flajolet2009analytic}.
\paragraph{Symbolic Method.}
A \emph{combinatorial class} is a pair $(\mathcal{A},|\cdot|)$, where $\mathcal{A}$ is a set of objects and $|\cdot|$ is the \emph{size} of the object. In our setting, the objects will be graphs or maps, and the size will be typically the number of vertices or the number of edges. The latter will also be called the \emph{atoms} of an object. 
We restrict ourselves to the case where the number of elements in $(\mathcal{A},|\cdot|)$ with a prescribed size is finite.
Under this assumption, we define the formal power series $A(z)=\sum_{a\in
\mathcal{A}}z^{|a|}=\sum_{n=0}^\infty a_n z^n$, and conversely, $[z^n]
A(z)= a_n$. $A(z)$ is called the \emph{generating function}
(or shortly the \emph{GF}) associated to the combinatorial class
$(\mathcal{A},|\cdot|)$. 
We will omit the size function whenever it is clear by the context.
Also, we will write $\mathcal{A}_n$ for the set of elements in $\mathcal{A}$ with size $n$, and $|\mathcal{A}_n|=a_n$. The simplest combinatorial class is the \emph{atomic} one, denoted by $\mathcal{Z}$, that contains one object of size one.

The \emph{Symbolic Method} provides a systematic tool to translate combinatorial class constructions
into algebraic relations between their GFs.
The basic constructions are the following.
The \emph{(disjoint) union} $\mathcal{A} \cup \mathcal{B}$ of two
classes $\mathcal{A}$ and $\mathcal{B}$ is defined in the obvious way. The cartesian product $\mathcal{A}
\times \mathcal{B}$ of two classes $\mathcal{A}$ and $\mathcal{B}$ is the set of pairs
$(a,b)$ where $a\in \mathcal{A}$ and $b\in \mathcal{B}$. The size of $(a,b)$ is additive, i.e. it is the
sum of the sizes of $a$ and $b$.
We can define the sequence and the multiset construction of a set $\mathcal{A}$, defined as the set of sequences (resp. multisets) of elements in $\mathcal{A}$, with size again defined additively. Finally, the composition of combinatorial classes $\mathcal{A}\circ \mathcal{B}$ corresponds to the substitution of objects of $\mathcal{B}$ into atoms of objects in $\mathcal{A}$. The size is additive in the objects of $\mathcal{B}$ that are used. The Pointed class $\mathcal{A}^\bullet$ corresponds to  $\mathcal{A}$ where one atom from each object is distinguished (we also call it \emph{rooted} class).
In Table \ref{exchanged} we include all the encodings into generating functions. Note that, in order for the GF encoding of the composition to work, one needs to assume that the atoms are distinguishable. 
\begin{table}[htb]
\begin{center}
\begin{tabular}{c c|c}
  Construction &  & Generating function \\\hline
  Union & $\mathcal{A}\cup\mathcal{B}$ & $A(z)+B(z)$ \\
  Product & $\mathcal{A}\times\mathcal{B}$ & $A(z)\cdot B(z)$ \\
  Sequence & $\mathrm{Seq}(\mathcal{A})$ & $\left(1-A(z)\right)^{-1}$ \\
  Multiset & $\mathrm{Mset}(\mathcal{A})$& $\prod_{a\in \mathcal{A}}(1-z^{|a|})^{-1}=\exp\left(\sum_{r=1}^{\infty}\frac{1}{r}A(z^r)\right)$\\
Composition& $\mathcal{A}\circ \mathcal{B}$ & $A(B(z))$\\
Pointing& $\mathcal{A}^\bullet$ & $\partial _zA(z)$
\end{tabular}
\caption{Table of combinatorial constructions, and their generating function counterparts.}\label{exchanged}
\end{center}
\end{table}	
\paragraph{Complex analysis and generating functions.}
We apply singularity analysis over generating functions to obtain asymptotic estimates.
The main reference here is again \cite{flajolet2009analytic}.
We say that a domain in $\mathbb{C}$ is \emph{dented} at a value $\rho>0$ if it is a set of the form $\Delta(\theta,R,\rho)=\{z\in \mathbb{C}: |z|<R,\,\arg(z-\rho)\notin [-\theta,\theta]\}$ for some real number $R>\rho$ and some positive angle $0<\theta<\pi/2$. We call \emph{dented disk} at $\rho$ a set of the form $\Delta(\theta,R,\rho)\cap B(\rho, r)$, where $B(\rho, r)$ is a ball of radius $r$ around $\rho$.

Let $f(z)$ be a generating function which is analytic in a dented domain at $z=\rho$.  The singular expansions we encounter in this paper are of the form
\begin{equation*}\label{sing}
f(z) = f_0 +f_1Z+f_2 Z^2 + f_3 Z^3 +f_4 Z^4 + \cdots  + f_{2k}Z^{2k} + f_{2k+1}Z^{2k+1} + O\left(Z^{2k+2}\right),
\end{equation*}
where $Z = \sqrt{1 - z/\rho}$ and $k=0$ or $k=1$.
 That is, $2k + 1$ is the smallest odd integer $i$ such that $f_i \ne 0$. The even powers of $Z$ are
analytic functions and do not contribute to the asymptotic behaviour of $[z^n]f(z)$. The number $\alpha = (2k+1)/2$ is called the
\textit{singular exponent}. If there is no other complex value of the same or smaller modulus on which such an expansion holds, we can apply the Transfer Theorem~\cite[Corollary VI.1]{flajolet2009analytic}
and obtain the estimate
\begin{equation}\label{everybody}
[z^n]f(z) \sim c \cdot n^{-\alpha-1}\cdot \rho^{-n},
\end{equation}
where $c = f_{2k+1}/\Gamma(-\alpha)$ and $\Gamma$ denotes the Gamma function. If there is a finite number of singularities $\rho _i$ of minimum modulus and corresponding expansions in dented disks at $\rho _i$, the same estimates apply and the contributions are added~\cite[Theorem VI.5]{flajolet2009analytic}. In order to obtain such expansions, we will use the following theorem.
\begin{theorem}[{\hspace{-.02cm}\cite[Proposition 1, Lemma 1]{drmota1997systems}}]\label{drmota}
Suppose that $F(z, y)$ is an analytic function in $z,y$ such that $F(0, y)\equiv 0$, $F(z, 0)\not\equiv 0$ and all Taylor coefficients of $F$ around $0$ are real and nonnegative. Then, the unique solution $y=y(z)$ of the functional equation $y=F(z,y)$ with $y(0)=0$ is analytic around $0$ and has nonnegative Taylor coefficients $y_k$ around $0$. Assume that the region of convergence of $F(z,y)$ is large enough such that there exist nonnegative solutions $x=\rho$ and $y=s$ of the system of equations
$\{ y = F(z,y), 1 = \frac{\partial }{\partial y}F(z,y),\}$
where $\frac{\partial }{\partial z}F(\rho,s)\neq0$ and $\frac{\partial }{\partial yy}F(\rho,s)\neq 0$.  Then, $y(z)$ has a representation of the form
\begin{equation} y(x)=
q(z)+h(z)\sqrt{\bigg( 1-\frac{z}{\rho} \bigg)}. \label{singular}\end{equation}
in a dented disk around $z=\rho$.
If also $y_k>0$ for large enough $k$, then $\rho $ is the unique singularity of $f$ on its radius of convergence and there exist functions $q(z), h(z)$ which are analytic around $z=\rho$, such that $y(z)$ is analytically continuable in a dented domain at $\rho $.
\end{theorem}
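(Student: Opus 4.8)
Since this is the classical square-root singularity schema for a positive implicit equation (the single-equation case of the Drmota--Lalley--Woods framework), the plan is to combine the analytic implicit function theorem, a Pringsheim-type positivity argument, and a local quadratic branch-point analysis. First I would establish existence, analyticity and positivity near $0$. Writing $F=\sum_{i,j}f_{ij}z^iy^j$, the hypothesis $F(0,y)\equiv 0$ forces $i\ge 1$ in every monomial (in particular $f_{01}=0$), while $F(z,0)\not\equiv 0$ gives some $f_{i0}>0$ with $i\ge 1$. Setting $\Phi(z,y)=y-F(z,y)$, we have $\Phi(0,0)=0$ and $\partial_y\Phi(0,0)=1\ne 0$, so the analytic implicit function theorem produces a unique analytic $y(z)$ with $y(0)=0$ solving $y=F(z,y)$. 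Nonnegativity of the coefficients $y_k$ then follows from the fixed-point iteration $y^{[0]}=0$, $y^{[n+1]}=F(z,y^{[n]})$, which is coefficientwise nonnegative, nondecreasing, and convergent to $y(z)$.

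Next I would locate the dominant singularity. Let $R$ be the radius of convergence of $y$; by Pringsheim's theorem $z=R$ is singular, and $y$ is increasing on $[0,R)$ since its coefficients are nonnegative. The solution extends analytically past any $z_0$ with $\partial_y\Phi(z_0,y(z_0))=1-\partial_yF(z_0,y(z_0))\ne 0$, so the only obstruction at $R$ is the equation $\partial_yF(z,y(z))=1$. Using the hypothesis that the convergence region of $F$ is large enough, I would show that $y(R^-)$ is finite, say equal to $\tau$, that $(R,\tau)$ solves the characteristic system $\{y=F(z,y),\ 1=\partial_yF(z,y)\}$, and --- comparing with the assumed nonnegative solution $(\rho,s)$ and exploiting monotonicity of $z\mapsto y(z)$ and of $y\mapsto\partial_yF(\rho,y)$ --- conclude that $R=\rho$ and $\tau=s$. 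Pinning down that the singularity sits \emph{exactly} at the characteristic point and that $y$ stays finite there is the step I expect to be the main obstacle, since this is precisely where the positivity of the coefficients and the size of the convergence domain of $F$ are both genuinely used.

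Then I would carry out the local branch-point analysis at $(\rho,s)$, where $\Phi=0$, $\partial_y\Phi=0$, but $\partial_{yy}\Phi(\rho,s)=-\partial_{yy}F(\rho,s)\ne 0$ and $\partial_z\Phi(\rho,s)=-\partial_zF(\rho,s)\ne 0$. Taylor expanding,
\[
0=\Phi(z,y)=\partial_z\Phi(\rho,s)\,(z-\rho)+\tfrac12\partial_{yy}\Phi(\rho,s)\,(y-s)^2+(\text{higher order}),
\]
so $z-\rho$ is an analytic function of $Y=y-s$ vanishing to order exactly two, $z-\rho=c_2Y^2+c_3Y^3+\cdots$ with $c_2\ne 0$ (Weierstrass preparation, or Newton--Puiseux inversion). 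Inverting this relation makes $Y$, hence $y$, an analytic function of $\sqrt{z-\rho}$ on a neighbourhood of $\rho$ slit along the cut, which is exactly the stated form $y(z)=q(z)+h(z)\sqrt{1-z/\rho}$ with $q,h$ analytic at $\rho$; the correct branch is fixed by requiring $y(z)\uparrow s$ as $z\uparrow\rho$, which forces $h(\rho)<0$.

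Finally, under the aperiodicity hypothesis $y_k>0$ for all large $k$, I would rule out other singularities of minimal modulus: for $|z|=\rho$ with $z\ne\rho$, strict triangle inequalities (of Daffodil-lemma type) give $|y(z)|<y(\rho)=s$ and $|\partial_yF(z,y(z))|<\partial_yF(\rho,s)=1$, so $\partial_y\Phi\ne 0$ there and $y$ remains analytic on the rest of the circle. By compactness $y$ continues analytically to a dented domain at $\rho$, the point $\rho$ is its unique singularity on $|z|=R$, and the functions $q,h$ in \eqref{singular} are the analytic-at-$\rho$ ingredients supplied by the branch-point analysis.
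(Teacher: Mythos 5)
This statement is not proved in the paper at all: it is imported verbatim from Drmota's article (\cite[Proposition~1, Lemma~1]{drmota1997systems}) as an external analytic tool, so there is no in-paper proof to compare your argument against. Judged on its own terms, your reconstruction is the standard and correct proof of the smooth implicit-function (square-root singularity) schema: implicit function theorem plus monotone fixed-point iteration for existence and coefficient positivity; Pringsheim and the identity $y'(z)\bigl(1-\partial_yF(z,y(z))\bigr)=\partial_zF(z,y(z))$ (with $\partial_zF>0$ on $(0,R)$) to show the only possible obstruction on the positive axis is the characteristic equation $\partial_yF=1$; Weierstrass preparation/Newton--Puiseux at the double point to get $y=q(z)+h(z)\sqrt{1-z/\rho}$; and a Daffodil-type strict inequality $|y(z)|<y(|z|)$ under $y_k>0$ to exclude other singularities of modulus $\rho$ and continue $y$ into a dented domain. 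You correctly single out the genuinely delicate step, namely showing $y(R^-)$ is finite and that the radius of convergence coincides with the characteristic root $\rho$; this is exactly where the ``region of convergence large enough'' hypothesis and coefficient positivity are consumed, and it is handled the same way in Drmota's original proof. One cosmetic remark: with the sign convention of \eqref{singular} your conclusion $h(\rho)<0$ is right, whereas the explicit value $h_0=\sqrt{2\rho\,\partial_zF/\partial_{yy}F}$ quoted after the theorem is the modulus (Drmota writes the expansion with a minus sign); this is a discrepancy in the paper's transcription, not in your argument.
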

We note that the coefficients in~\eqref{singular} are explicitly computable. One coefficient is given in~\cite{drmota1997systems}, namely $$h_0 = \sqrt{\frac{2\rho \frac{\partial }{\partial x}F(\rho,s)}{\frac{\partial }{\partial yy}F(\rho,s)}},$$ and the rest are easy to compute similarly (see the associated \texttt{Maple} session).

\section{Structure of $K_{4}$-minor-free link-diagrams}\label{monuments}

\label{peregrina}

We say that a link-type $L$ is {\em $K_{4}$-minor free} if there exists a diagram in ${\cal D}_{L}$ that is $K_{4}$-minor free (recall that ${\cal D}_{L}$ denotes all possible diagrams arising from $L$).
Given an $i\in\Bbb{N}$, we denote by ${\cal D}_{\geq i}$ the set of all link-diagrams whose graph is $\hat{C}_{j}$ for some $j\geq i$.

Let $D_{i}=(V_{i},E_{i},\sigma_{i}), i\in\{1,2\}$ be two diagrams, where $V_i\neq \emptyset $.
We say that a diagram $D(V,E,\sigma )$ is a  {\em 2-edge-sum} of $D_{1}$ and $D_{2}$ if $D$ can be created from $D_{1}$ and $D_{2}$ as follows:
we pick two edges $e_{1}\in E_{1}$ and $e_{2}\in E_{2}$, we remove them, and add two edges $f_{1}$ and $f_{2}$ such that both $f_{1},f_{2}$ have endpoints from both $e_{1}$ and $e_{2}$, and such that the resulting embedding remains plane. The $\sigma $ function is preserved, i.e., for all $v\in V_i$, $\sigma (v)\cap \sigma _i(v)\neq \emptyset$.

Let ${\cal D}$ be a set of link-diagrams.
We define the {\em closure} of ${\cal D}$, denoted by ${\bf cl}({\cal D})$ with respect
to 2-edge sums as the set containing every diagram
$D$ such that
\begin{itemize}
\item either $D\in {\cal D}$ or
\item  there exists $D_{1},D_{2}\in{\cal D}$ such that $D$ is a 2-edge sum of $D_{1}$ and $D_2$.
\end{itemize}
From now on, we denote by ${\cal D}$ the set of all  link-diagrams
whose graph is  $K_{4}$-minor-free.
\begin{lemma}
\label{archetype}
Let $G=(V,E)$  be a $4$-regular, $K_{4}$-minor-free and 3-edge-connected graph. Then $G$ is outerplanar.
\end{lemma}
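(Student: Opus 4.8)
The plan is to prove the contrapositive-flavored claim directly: assume $G$ is $4$-regular, $K_4$-minor-free, and $3$-edge-connected, and show it embeds in the plane with all vertices on the outer face. By the characterization quoted in the preliminaries, it suffices to show $G$ contains no subdivision of $K_4$ and no subdivision of $K_{2,3}$. The first is free by hypothesis. So the real content is: a $4$-regular, $3$-edge-connected, $K_4$-minor-free graph cannot contain a subdivision of $K_{2,3}$. I would argue by contradiction.

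First I would set up the structure of a $K_{2,3}$-subdivision inside $G$: there are two \emph{branch vertices} $u,v$ of degree $3$ (in the subdivision) joined by three internally disjoint paths $P_1,P_2,P_3$. Since $G$ is $K_4$-minor-free, $G$ is in particular $2$-connected or has a very restricted block structure; more usefully, I would invoke the fact (from the series-parallel description in the introduction) that every $2$-connected $K_4$-minor-free graph is series-parallel, hence has a well-known decomposition into series and parallel compositions of a single edge. The key point I want to extract is that in a series-parallel graph, for any two vertices $u,v$ lying in a common $2$-connected block, the set of $u$–$v$ paths has a nested/laminar "parallel" structure — one cannot have three paths between $u$ and $v$ together with the extra edges forced by $4$-regularity without creating a $K_4$ minor. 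Concretely, the three paths $P_1,P_2,P_3$ already form a theta-graph $\Theta$; since every vertex of $G$ has degree $4$ and the internal vertices of the $P_i$ have degree only $2$ inside $\Theta$, each internal vertex carries two further half-edges, and $u,v$ carry one further half-edge each. I would then trace where these surplus edges go and show that any way of attaching them back to $\Theta$ (respecting $3$-edge-connectivity, which forbids the surplus edges from forming pendant/separable pieces, forcing them to reconnect to $\Theta$) produces a $K_4$ minor — e.g. an edge from an internal vertex of $P_1$ to an internal vertex of $P_2$, contracted appropriately, yields $K_4$; an edge joining two internal vertices of the same $P_i$ can be suppressed by $3$-edge-connectivity arguments or reduces the case.

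The cleanest route, which I would actually try to push through, is: show that a $4$-regular $3$-edge-connected $K_4$-minor-free graph must in fact be one of the graphs $\hat C_n$ (the cycle with doubled edges), which is manifestly outerplanar. For this, take the series-parallel decomposition; $3$-edge-connectivity rules out $2$-edge-cuts except the "trivial" ones, and $4$-regularity pins down the local picture: every vertex is either incident to a doubled edge on each side, or sits on a structure that would need degree $>4$ or create a $K_4$ minor. A short induction on $|V(G)|$ peeling off a doubled edge (a $\hat C_2$-like piece) or a degree-$4$ vertex between two parallel paths should close it. I expect the main obstacle to be the bookkeeping in the $K_{2,3}$/theta-graph case analysis: carefully using $3$-edge-connectivity to forbid the surplus edges from hanging off as separable lobes, so that they are genuinely forced to reconnect and thereby manufacture the forbidden $K_4$ minor. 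Once that is handled, outerplanarity follows immediately from the excluded-minor characterization of outerplanar graphs (no $K_4$- or $K_{2,3}$-subdivision), so no further planarity argument is needed.
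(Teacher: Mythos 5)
Your reduction to excluding a $K_{2,3}$-subdivision is the right first move and matches the paper, but the core of your argument has a genuine gap. You claim that $3$-edge-connectivity forces the surplus half-edges at the internal vertices of the three $u$--$v$ paths to ``reconnect to $\Theta$'' in a way that manufactures a $K_4$ minor. That is only true when the surplus structure joins the \emph{interiors of two different} paths $P_i,P_j$. In the critical remaining case --- where everything hanging off the interior of $P_i$ reconnects only to the interior of $P_i$ itself and/or to the branch vertices $u,v$ --- no $K_4$ minor arises at all: the resulting configuration has only three vertices of degree $\geq 3$ and is still series-parallel, so neither $K_4$-minor-freeness nor your ``no separable lobes'' use of $3$-edge-connectivity yields a contradiction. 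Your sketch never says how to rule this case out, and it is exactly the case that carries the content of the lemma.

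The paper closes this case not with a minor argument but with an edge count at the two branch vertices: since a connecting path between interiors of distinct $P_i$ would create a $K_4$ topological minor, the three paths must lie in three distinct components $C_1,C_2,C_3$ of $G-\{v_1,v_2\}$; each $C_i$ is separated from the rest by the edges joining it to $\{v_1,v_2\}$, so $3$-edge-connectivity forces at least $3$ such edges per component, i.e.\ at least $9$ edges incident to $\{v_1,v_2\}$, while $4$-regularity allows at most $8$. Some quantitative step of this kind is indispensable, and it is absent from your proposal. Your alternative ``cleanest route'' (proving directly that $G\cong\hat C_n$) is really the statement of Lemma~\ref{situation}, which the paper derives \emph{from} Lemma~\ref{archetype}; as written it is a restatement of the goal (``$4$-regularity pins down the local picture'') rather than an argument, so it does not repair the gap.
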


\begin{proof}
Suppose to the contrary that $G$ is not outerplanar, and hence it contains as a subgraph some subdivision $H$ of $K_{2,3}$.
Let $v_{1}$ and $v_{2}$ be the vertices of $H$ that have degree $3$.
Let also $P_{1}$, $P_{2}$, and $P_{3}$ the paths that are the connected components of $H-\{v_1,v_2\}$.

Let $G^-=G-\{v_1,v_2\}$.
We first observe that none of the connected components of $G^-$ contains more than one of the paths in $\{P_{1},P_{2},P_{3}\}$, as this would imply the existence of a path between vertices of these two paths in the connected component that contains them.
This would imply the existence of a copy of $K_{4}$ as a topological minor, a contradiction.

Using the above claim, we deduce that $G^-$ has at least 3 connected components $C_{1},C_{2},C_{3}$, where $P_{i}$ is a subgraph of $C_{i}, i\in[3]$.
Let $F\subseteq E(G)$ be the set of edges that are incident to either $v_{1}$ or $v_{2}$.
Clearly, by the 4-regularity of $G$ and the existence of $P_i$, $F$ contains 7 or  8 edges, depending on whether $v_1$ and $v_2$ are adjacent or not.
Moreover, because of the 3-edge-connectivity of $G$, for each $i\in[3]$ there are at least 3 edges in $F$ that are incident to vertices in $C_{i}$ and this implies that $|F|\geq 9$, a contradiction.
\end{proof}
\begin{lemma}
\label{situation}
Every $2$-connected, 4-regular, $K_{4}$-minor-free, and $3$-edge-connected graph on $n\geq 1$ vertices is isomorphic to $\hat{C}_{n}$.
\end{lemma}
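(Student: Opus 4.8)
The plan is to use Lemma~\ref{archetype} to reduce to the outerplanar case, and then to exploit 2-connectivity plus 4-regularity to force the cyclic structure of $\hat C_n$. First I would observe that the hypotheses of Lemma~\ref{archetype} are met, so $G$ is outerplanar; fix an outerplanar embedding and let $f_0$ be the outer face, whose boundary walk visits every vertex. Since $G$ is $2$-connected, the boundary of $f_0$ is a single cycle $C = w_1 w_2 \cdots w_n w_1$ passing through all $n$ vertices exactly once (here I would invoke the standard fact that in a $2$-connected outerplanar graph the outer face is bounded by a Hamiltonian cycle). Every other edge of $G$ is then a chord of $C$ drawn inside the disk bounded by $C$.

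The core of the argument is to show there are no ``crossing-free interior chords'' other than the ones that double up the edges of $C$. Here I would argue by degrees: each vertex $w_i$ already has degree $2$ from the cycle $C$, and $4$-regularity leaves exactly $2$ further half-edges at each vertex, for a total of $n$ additional edges (counting multiplicity). Suppose some additional edge $e = w_iw_j$ is a genuine chord, i.e.\ $w_i$ and $w_j$ are not consecutive on $C$. Then $e$ together with the two arcs of $C$ between its endpoints, and the extra edges needed to make the two ``sides'' satisfy the $4$-regularity/$3$-edge-connectivity constraints, produces a subdivision of $K_4$: concretely, $e$ splits the disk into two regions, each of which must still send enough edges across (by $3$-edge-connectivity of $G$ applied to the cut separating the two arcs) to create a third internally disjoint path between a suitable pair of branch vertices on top of the two arcs of $C$ — contradicting $K_4$-minor-freeness. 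Hence every additional edge joins two consecutive vertices of $C$, i.e.\ it is parallel to a cycle edge. Finally, a second degree count at each vertex shows that the multiplicity pattern must be uniform: if some edge $w_iw_{i+1}$ of $C$ had multiplicity $3$ or more, then $w_i$ would have degree $\ge 4$ already from that bundle plus the other cycle edge, forcing multiplicity exactly $1$ on the edge $w_{i-1}w_i$ and then propagating around $C$; chasing this, the only consistent $4$-regular solution is that every edge of $C$ has multiplicity exactly $2$, which is precisely $\hat C_n$. The small cases $n=1,2$ (where $\hat C_1$ is a vertex with a double loop and $\hat C_2$ is two vertices joined by a quadruple edge) I would check by hand: a $4$-regular graph on one vertex is a double loop; on two vertices, $2$-connectivity and $4$-regularity force the quadruple edge.

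The step I expect to be the main obstacle is making the ``genuine chord forces a $K_4$-subdivision'' argument fully rigorous, since one has to carefully use $3$-edge-connectivity to guarantee that each side of the chord $e$ returns enough edges to the cycle to build the third path of the $K_4$-subdivision — the naive count only gives $2$-edge-connectivity type information, and one must be careful that the returning edges actually land on the correct arcs rather than bunching up near the endpoints. An alternative, perhaps cleaner route for this step is to avoid chords entirely: take a minimal counterexample $G$, use $4$-regularity to find a vertex $v$ incident to a multi-edge or configure a ``local reduction'' that suppresses a degree-pattern and produces a smaller graph still satisfying all hypotheses, then apply induction; but the chord/outerplanar argument above is the most direct given that Lemma~\ref{archetype} has already done the heavy lifting.
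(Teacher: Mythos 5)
Your overall plan---reduce to outerplanarity via Lemma~\ref{archetype}, take the Hamiltonian outer cycle $C$, rule out genuine chords, and then force every extra edge to double a cycle edge---is a legitimate route, close in spirit to the paper's proof (which instead shows that every inner face meeting an outer edge is a bigon). But the step you yourself flag as the main obstacle is genuinely broken. A genuine chord $e=w_iw_j$ does \emph{not} produce a subdivision of $K_4$: the graph is already embedded with all vertices on the outer face, so it contains no $K_4$ subdivision no matter which chords are present, and $3$-edge-connectivity cannot ``create a third internally disjoint path'' between an interior vertex $a$ of one arc and an interior vertex $b$ of the other, since any such path would have to cross $e$ in the embedding. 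Concretely, take the $6$-cycle $1\cdots 6$, add the chord $14$, double the edges $12,23,34$ and triple the edge $56$: this is $2$-connected, $4$-regular, outerplanar (hence $K_4$-minor-free) and has a genuine chord; what it violates is only $3$-edge-connectivity (the pair $\{45,61\}$ is a $2$-edge cut). So the contradiction must come from an edge-cut count, not from a $K_4$. The correct argument is: the interior $A^{\circ}$ of each arc is separated from the rest of $G$ by the two cycle edges at its ends together with whatever chords join $A^{\circ}$ to $\{w_i,w_j\}$ (nothing from $A^{\circ}$ can reach the other arc's interior without crossing $e$); every edge cut of a connected $4$-regular graph is even, so $3$-edge-connectivity forces this cut to have size at least $4$, i.e.\ each arc interior must absorb at least two of the spare half-edges at $\{w_i,w_j\}$---but after the four cycle edges and the chord only two spare half-edges remain there in total. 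This parity-plus-counting step is exactly the engine of the paper's proof (there, a third vertex $z$ on the inner face $F_e$ yields an edge separator of size at most $3$, hence of size $2$ by parity, contradicting $3$-edge-connectivity), and your sketch never supplies it.

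There is a second, smaller gap at the end. Writing $m_i$ for the number of extra copies of the cycle edge $w_iw_{i+1}$, $4$-regularity gives $m_{i-1}+m_i=2$ at each vertex, whose solutions alternate between $a$ and $2-a$ around $C$. For odd $n$ this forces $a=1$, but for even $n$ the alternating pattern of multiplicities $1$ and $3$ is a perfectly consistent $4$-regular, $2$-connected, outerplanar solution; your ``propagation'' lands on it rather than on a contradiction. It is excluded only because the two single edges adjacent to a given vertex form a $2$-edge cut, so you must invoke $3$-edge-connectivity once more. (Loops must also be ruled out here for the same reason: a loop at $w_i$ leaves the two cycle edges at $w_i$ as a $2$-edge cut.)
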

\begin{proof}

We examine the non-trivial case where $n\geq 3$.
From Lemma~\ref{archetype},  $G$ is $K_{2,3}$-free,  therefore it is outerplanar and  can be embedded in the plane so that all its vertices
lie on its unbounded face $F$.
Let $E_{\rm out}$ be the set of the edges of $G$  that  are
incident to $F$.
For each edge $e$ in $E_{\rm out}$, we denote by $F_{e}$ the face that is incident to $e$ and is different from $F$.

We next claim that for every $e\in E_{\rm out}$, $F_{e}$ is incident to exactly two edges.
Suppose to the contrary that this is not correct for some $e\in E_{\rm out}$ with end-vertices $x$ and $y$.
Let $z$ be a vertex incident to $F_{e}$ that is different from $x$ and $y$.
Notice that $z$ is a cut-vertex in the graph $G^-=G- \{e\}$, that places $x,y$ in different connected components in $G^{-}-\{z\}$.
Let us call them $C_x$ and $C_y$.
Since $z$ has degree 4, for one of them it holds that $|V(C_j)\cap  N_{G^-}(z)| \leq 2$, say for $C_x$.

Let $S=V(C_x)\cap  N_{G^-}(z)$ and observe that $\{\{z,w\}\mid w\in S\}\cup \{e\}$ is an edge separator of $G$ of size $\leq 3$ (an edge separator is a set of edges whose removal increases the number of connected components).
As $G$ is connected and every edge separator of a 4-regular graph contains an even number of edges, we obtain that $|S|=2$, a contradiction to the 3-edge connectivity of $G$.

We just proved that $G$ contains $\hat{C}_{n}$ as a spanning subgraph (i.e., a subgraph with the same set of vertices). The fact that $G$ does not contain more edges
than $\hat{C}_{n}$ follows from the fact that $\hat{C}_{n}$ is already $4$-regular.
\end{proof}
\begin{lemma}
${\bf cl}({\cal D}_{\geq 1})$ is the  set of all reduced and connected $K_{4}$-minor-free link-diagrams. \label{inscribed}
\end{lemma}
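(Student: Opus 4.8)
The plan is to prove a double inclusion between $\mathbf{cl}(\mathcal{D}_{\geq 1})$ and the set $\mathcal{R}$ of reduced connected $K_4$-minor-free link-diagrams, and the key technical ingredient is Lemma~\ref{situation}, which classifies the $2$-connected, $4$-regular, $K_4$-minor-free, $3$-edge-connected graphs as precisely the graphs $\hat{C}_n$. First I would check the easy inclusion $\mathbf{cl}(\mathcal{D}_{\geq 1})\subseteq\mathcal{R}$: every diagram in $\mathcal{D}_{\geq 1}$ has underlying graph $\hat{C}_j$ with $j\geq 1$, which is $4$-regular, connected, $K_4$-minor-free (it is outerplanar), and has no cut-vertex, so it lies in $\mathcal{R}$; then I would verify that $\mathcal{R}$ is closed under $2$-edge-sums. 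For this, note that a $2$-edge-sum of two diagrams whose graphs are $K_4$-minor-free again has a $K_4$-minor-free graph (a $2$-edge-sum corresponds to gluing along a $2$-edge-cut, and $K_4$-minor-freeness is preserved by such clique-sum-like operations since treewidth stays $\leq 2$), the result is plainly connected and $4$-regular, and a cut-vertex cannot be created because each of the two new edges $f_1,f_2$ has its endpoints in distinct factors, so removing any single vertex leaves the two-edge bridge between the factors intact inside whichever factor it does not lie in.

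For the reverse inclusion $\mathcal{R}\subseteq\mathbf{cl}(\mathcal{D}_{\geq 1})$, I would argue by induction on the number of vertices $n$ of the underlying graph $G$ of a diagram $D\in\mathcal{R}$. If $G$ is $3$-edge-connected, then since $G$ is $2$-connected (no cut-vertex plus $n\geq 1$; for $n=1$ the graph is the double loop $\hat{C}_1$ and for $n\geq 2$ absence of a cut-vertex in a $4$-regular graph gives $2$-connectivity), $4$-regular and $K_4$-minor-free, Lemma~\ref{situation} gives $G\cong\hat{C}_n$, hence $D\in\mathcal{D}_{\geq 1}$ directly. If $G$ is \emph{not} $3$-edge-connected, then since $G$ is $4$-regular with no cut-vertex, it has an edge cut of size exactly $2$ (parity forces the cut to be even, and it cannot be $0$ by connectivity); say $E(G)=E_1\cupdot\{e,e'\}\cupdot E_2$ with the two sides nonempty. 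The two edges $e,e'$ together with the two sides induce a decomposition of $D$ as a $2$-edge-sum of two smaller diagrams $D_1,D_2$: contract each side to recover the removed edge in each factor, keeping the $\sigma$-labels. Here I must check that each $D_i$ is again a legitimate link-diagram, i.e.\ that its graph is $4$-regular (the two half-edges of the reinstated edge supply exactly the degree that was lost to the cut), connected, $K_4$-minor-free (a subgraph-plus-edge-contraction of $G$), and has no cut-vertex — the last point is the delicate one and may require choosing the $2$-edge-cut to be "innermost" or using $3$-block/SPQR-type structure so that the pieces are genuinely $2$-connected. Then by induction each $D_i\in\mathbf{cl}(\mathcal{D}_{\geq 1})$, and since $\mathbf{cl}(\mathcal{D}_{\geq 1})$ is closed under $2$-edge-sums (which follows from the first paragraph's computation applied to the closure, or is immediate from the definition of closure being already closed under one step and an easy induction that iterated $2$-edge-sums stay in it), we conclude $D\in\mathbf{cl}(\mathcal{D}_{\geq 1})$.

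The main obstacle I anticipate is making the $2$-edge-sum decomposition in the inductive step clean: I need the two factors $D_1,D_2$ to themselves be \emph{reduced} (cut-vertex-free) so that the induction hypothesis applies, and a careless choice of $2$-edge-cut could produce a factor with a cut-vertex. The fix is to pick a $2$-edge-cut that is minimal in the sense that one side is "$3$-connected-like" — concretely, one can take the decomposition along the tree of $3$-edge-connected components (equivalently, repeatedly split along $2$-edge-cuts until each piece is $3$-edge-connected, which by Lemma~\ref{situation} is some $\hat{C}_j$), and observe that $2$-edge-cuts in a $4$-regular graph without cut-vertices yield pieces that again have no cut-vertex. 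A secondary point to be careful about is the base cases and degenerate pieces: $\hat{C}_1$ (double loop) and $\hat{C}_2$ (two vertices joined by four parallel edges) must be admitted as members of $\mathcal{D}_{\geq 1}$, and one should confirm the smallest reduced connected $K_4$-minor-free link-diagram on one vertex is exactly $\hat{C}_1$, anchoring the induction. Once these structural checks are in place, the equality $\mathbf{cl}(\mathcal{D}_{\geq 1})=\mathcal{R}$ follows.
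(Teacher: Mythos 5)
Your proposal is correct and follows essentially the same route as the paper: both directions rest on Lemma~\ref{situation} for the $3$-edge-connected case and on decomposing along a (parity-forced) $2$-edge-cut otherwise, with your induction playing the role of the paper's minimal-counterexample argument. The cut-vertex worry you flag for the factors does not actually require an ``innermost'' cut: since $D$ is reduced, the endpoints of the two cut edges are distinct and any $2$-edge-cut already yields $2$-connected factors, which is exactly what the paper asserts.
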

\begin{proof}
We set ${\cal C}={\bf cl}({\cal D}_{\geq 1})$.
Suppose that there exists a  $D=(V,E,\sigma)$ that is a reduced $K_{4}$-minor-free link-diagram and does not belong to ${\cal C}$. Let $D$ be such a diagram where $|V|$ is minimized.
If $G=(V,E)$ is 3-edge-connected then, by Lemma~\ref{situation}, $G$ is isomorphic to $\hat{C}_{n}\in{\cal D}_{\geq 1}\subseteq {\cal C}$, a contradiction. Therefore $G$ has an edge-cut consisting of two edges $e_{1}=\{x_{1},x_{2}\}$ and $e_{2}=\{y_{1},y_{2}\}$.
As $D$ is reduced, $G$ has no cut-vertices, therefore $x_{1},x_{2},y_{2},y_{2}$ are pairwise distinct.
Let $G_{1}^-$ and $G_{2}^-$ be the connected components of $G-\{e_{1}, e_{2}\}$
and without loss of generality, we assume that $x_{i},y_{i}\in V(G_{i}^{-}), i\in[2]$.
Let $G_{i}$ be the graph obtained from $G_{i}^-$
after adding the edge $\{x_{i},y_{i}\}$, $i\in[2]$.
We also set $\sigma_{i}=\sigma|_{V(G_{i})}, i\in[2]$.
Observe that $D$ is a 2-edge sum of $D_{1}=(G_{1},\sigma_{1})$ and $D_{2}=(G_{1},\sigma_{2})$.
Moreover both $G_{1}$ and $G_{2}$ are 2-connected, $K_{4}$-minor free, and 4-regular.
As $G_{1}$ and $G_{2}$ have both fewer vertices than $G$, by the minimality of the choice of $D$, we have that $D_{1}, D_{2}\in{\cal C}$, therefore $D\in{\cal C}$, a contradiction.

 Suppose there exists a diagram $D\in \mathcal{C}$ that either is not reduced or contains $K_4$ as a minor. We again choose
 such a $D=(V,E,\sigma)$ where $|V|$ is minimized.
This cannot be of the form of $\hat{C}_{n}$, as all such diagrams are biconnected and $K_{4}$-minor free.
If $D\not\in {\cal D}_{\geq 1}$, then there are $D_{1},D_{2}\in{\cal C}$ with smaller vertex set, such that
$D$ is the 2-edge sum of $D_{1}$ and $D_{2}$. The latter diagrams are reduced and $K_4$-minor-free, because of the minimality of $D$. Consequently, $D$ is also $K_4$-minor-free, since the 2-edge sum operation does not create any new $K_4$ in $D$. Moreover, the 2-edge sum operation maintains 2-connectivity. These two facts contradict the choice of $D$.
\end{proof}

Let ${\cal T}_{2}={\bf ccl}(\bigcup_{q\in\Bbb{Z}}\{T(q)\})$.
Let $\mathcal{L}$ be the class of links that have a $K_4$-minor-free link-diagram, namely, ${\cal L}=\{L\mid {\cal D}_{L}\cap {\cal D}_{}\neq\emptyset\}$ (recall that ${\cal D}$ is the set of all link-diagrams
whose graph is  $K_{4}$-minor-free).
The following theorem gives a structural decomposition of links in $\mathcal{L}$ (recall the definition of $\bf dcl$ in Definition 1).

\begin{theorem}
\label{acceptance}
$\mathcal{L}={\bf dcl}({\cal T}_2)$.
\end{theorem}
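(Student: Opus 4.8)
The plan is to prove the two inclusions $\mathcal{L}\subseteq \mathbf{dcl}(\mathcal{T}_2)$ and $\mathbf{dcl}(\mathcal{T}_2)\subseteq \mathcal{L}$ separately, leaning on the graph-theoretic lemmata already established (Lemmata~\ref{archetype}, \ref{situation}, \ref{inscribed}) together with the classical prime decomposition (Theorem~\ref{masochism}).

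\textbf{The inclusion $\mathbf{dcl}(\mathcal{T}_2)\subseteq \mathcal{L}$ (the easy direction).} Here I would argue that every link built from the $T(q)$'s by connected sums and disjoint sums admits a $K_4$-minor-free diagram. The base case is that each $T(q)$ has the link-diagram whose underlying graph is $\hat C_{|q|}$ (as recalled in the preliminaries on torus links), and each $\hat C_{|q|}$ is outerplanar, hence $K_4$-minor-free. For the inductive step I would observe that the connected sum of two links corresponds, at the level of diagrams, to a 2-edge-sum of the corresponding diagrams: splicing two diagrams along an edge in each realizes the connected sum geometrically. Since a 2-edge-sum of two $K_4$-minor-free maps is again $K_4$-minor-free (the new graph has a 2-edge-cut separating the two pieces, and a $K_4$-topological-minor, being 3-edge-connected, cannot straddle such a cut — this is essentially the argument inside Lemma~\ref{inscribed}), connected sums stay inside $\mathcal{L}$. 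Finally, disjoint sum of diagrams keeps $K_4$-minor-freeness trivially (disjoint union of $K_4$-minor-free graphs is $K_4$-minor-free, possibly with extra vertex-less-edge components $\hat C_0$), so $\mathbf{dcl}(\mathcal{T}_2)\subseteq\mathcal{L}$.

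\textbf{The inclusion $\mathcal{L}\subseteq \mathbf{dcl}(\mathcal{T}_2)$ (the substantive direction).} Take $L\in\mathcal{L}$ and fix a $K_4$-minor-free diagram $D\in\mathcal{D}_L$. Passing to connected components of the underlying map, and noting that a vertex-less-edge component contributes a trivial knot summand $T(0)$ to a disjoint sum, I may reduce to the case where $D$ is connected, so $L=L(D)$ with $D$ connected and $K_4$-minor-free; it suffices to show such $L$ lies in $\mathbf{ccl}(\bigcup_q\{T(q)\})$ (then $\mathbf{dcl}$ accounts for the disconnected case). Next I would reduce $D$ to a \emph{reduced} diagram: a cut-vertex of the underlying map corresponds to a nugatory crossing, removable by a Reidemeister-I-type twist without changing $L(D)$ and without destroying $K_4$-minor-freeness (removing edges/vertices only helps), so after finitely many such reductions $D$ is reduced, connected, and $K_4$-minor-free. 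By Lemma~\ref{inscribed}, $D\in\mathbf{cl}(\mathcal{D}_{\geq 1})$, i.e. $D$ is obtained from diagrams whose graphs are $\hat C_j$ ($j\geq 1$) by iterated 2-edge-sums. Now I would prove by induction on the number of 2-edge-sums that $L(D)\in\mathbf{ccl}(\bigcup_q\{T(q)\})$: the base case is that a diagram with graph $\hat C_j$ represents some torus link $T(q)$ with $|q|=j$ — this requires identifying which $\sigma$-decorations of $\hat C_j$ give which $T(q)$, the key point being that all such diagrams are alternating-or-reducible and in fact represent $T(q)$ for the appropriate signed $q$ (one must check the choices of over/under at the $j$ double-edges either all agree in a way giving $\pm j$, or partially cancel via Reidemeister-II to give a smaller $|q|$, still a torus link). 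The inductive step uses that a 2-edge-sum of diagrams is a connected sum of the links they represent, and that $\mathbf{ccl}$ is closed under connected sum; one must also check the resulting link is non-split, which holds because the underlying map is connected and reduced.

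\textbf{Main obstacle.} The crux is the base case of the last induction: showing precisely that every reduced link-diagram whose underlying graph is $\hat C_n$ represents a torus link $T(q)$, and handling the interaction with the $\sigma$ data. Unlike the graph side, this is genuinely topological — one must track the over/under information around the ``cycle of bigons'' $\hat C_n$, simplify via Reidemeister-II moves where consecutive crossings have opposite sign, and recognize the resulting alternating (or trivial) diagram on $\hat C_{|q|}$ as the standard $(2,q)$ torus link diagram. I would also need to be slightly careful that when connected sums are formed we are dealing with invertible links (torus links are invertible, as noted), so the connected sum is well-defined, and that the decomposition into $\mathbf{dcl}$ of $\mathbf{ccl}$ matches the prime-decomposition framework of Theorem~\ref{masochism}; these are bookkeeping points rather than real difficulties.
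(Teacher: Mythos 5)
Your proposal follows essentially the same route as the paper's proof: reduce to the connected (non-split) case, pass to a minimal/reduced diagram, invoke Lemma~\ref{inscribed} to write it as iterated 2-edge-sums of $\hat{C}_j$'s, translate 2-edge-sums into connected sums of torus links, and prove the reverse inclusion by induction on the number of prime factors using the invertibility of torus links to make the 2-edge-sum gluing unambiguous. The step you flag as the main obstacle --- that every $\sigma$-decoration of $\hat{C}_n$ represents some $T(q)$ --- is treated as implicit in the paper as well (it just cites the correspondence between $T(q)$ and $\hat{C}_{|q|}$ from the preliminaries), and your sketch of resolving it by Reidemeister-II cancellation down to the standard alternating $(2,q)$ diagram is the correct way to fill it in.
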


\begin{proof}
It is clear that both classes are closed under disjoint sums, so it is enough to prove the theorem for non-split links in $\mathcal{L}$ which, for the purpose of this proof, are  denoted  by $\mathcal{L}'$.

We first prove that $\mathcal{L}'\subseteq {\bf ccl}({\cal T}_2)$. Let $L\in \mathcal{L}'$ and non-split. Then it has a diagram that is $K_4$-minor-free. Let us pick a diagram $D_L$ of minimal $|V|$. This is reduced, so $D_L\in {\bf cl}({\cal D}_{\geq 1})$ by Lemma~\ref{inscribed}. Then $D_L$ is either some $\hat{C}_i$ or a series of consecutive 2-edge sums between $\hat{C}_i$. The 2-edge sum operation can be translated to the connected sum operation in the corresponding links. Thus, either $L$ is a torus link $T(q)$, $q\in \mathbb{Z}\setminus \{0\}$, or the result of a series of connected sums of such torus links, i.e., $L\in {\bf ccl}({\cal T}_2)$.

We now prove that ${\bf ccl}({\cal T}_2)\subseteq \mathcal{L}'$. Let $T\in  {\bf ccl}({\cal T}_2)$ and non-split, i.e. $T=T_1\# ...\# T_n$, where $T_i\in \mathcal{T}_2$ and prime. The claim is shown by induction on $n$. If $n=1$, i.e. $T$ is prime, then the claim is true. Suppose that the claim is true for $n<k$ and let $T=T_1\# ...\# T_{k}$, $T'=T_1\# ...\# T_{k-1}$, and $C$ the component of $T$ on which $T_k$ is connected. Then $T'$ belongs in $\mathcal{L}$ by the induction hypothesis, thus it has a $K_4$-minor-free link-diagram $D$. We know there is an $i$ such that $\hat{C}_i$ is a diagram of $T_k$ with these properties. We embed $\hat{C}_i$ in a face adjacent to an edge of $C$ and perform a 2-edge sum operation. The resulting diagram remains $K_4$-free and represents the link $T$: the way the half-edges were connected does not matter, since the class ${\cal T}_2$ is a class of invertible links.
 \end{proof}




\section{Enumeration of knots and links}\label{enumsection}

Recall that $\mathcal{L}$ is the set of link-types that have a $K_4$-minor-free link-diagram.
Let $\mathcal{K}, \overline{\mathcal{K}}$ be, respectively, the set of knot types in $\mathcal{L}$ and the set of prime knot types in $\mathcal{L}$.
We denote by $\overline{\mathcal{L}}$ the set of non-split link-types in $\mathcal{L}$, and $\hat{\mathcal{L}} $ the set of the link-types in $\mathcal{L}$ with no trivial disjoint components.

\subsection{Enumeration of $\mathcal{L}$}

In this section, we enumerate the combinatorial classes $(\mathcal{L}, m)$, $(\overline{\mathcal{L}}, n)$, $(\hat{\mathcal{L}}, n)$, $(\overline{\mathcal{K}}, n)$, $(\mathcal{K}, n)$, where $m$ is the number of edges in a minimal diagram of a link and $n$ is the crossing number. We denote by $L(z)$, $\overline{L}(z)$, $\hat{L}(z)$, $\overline{K}(z)$, and $K(z)$ the corresponding generating functions (according to the size considered on each class).
Notice that it is not possible to enumerate $\mathcal{L}$ with respect to crossing number: the number of links with a given crossing number is infinite, since the disjoint sum of any such link and a trivial link of arbitrarily many components has the same crossing number.

Let $\mathcal{G}$ be the combinatorial class of unrooted, unlabelled trees, with size equal to the number of vertices. Consider the sets $A= \{2\nu +1: \nu\in \mathbb{Z}\}\setminus \{1,-1\}$, $B=\{ 2\nu : \nu\in\mathbb{Z}\}\setminus \{0,-2\}$. For $T\in \mathcal{G}$, consider all possible labelings of $T$, such that the vertices are labeled with a multiset of $A$ or $1$, and each edge of $T$ is labeled with a number in $B$. We consider two such labelled trees equivalent if there is a graph isomorphism that identifies them as graphs and also identifies their labels. Let $\mathcal{T}$ be the set of the resulting equivalence classes. We define the size of a label $i=i_1,...,i_k$ to be the sum of the absolute values $|i_j|$, and the size of a tree in $\mathcal{T}$ to be the sum of all labels, apart from the 1-label. These labels will be used to encode crossing numbers. See Figure \ref{lakehurst} for an example of an object in $\mathcal{T}$, of size 67.

\begin{figure}[h!]\centering
  \includegraphics[scale=1]{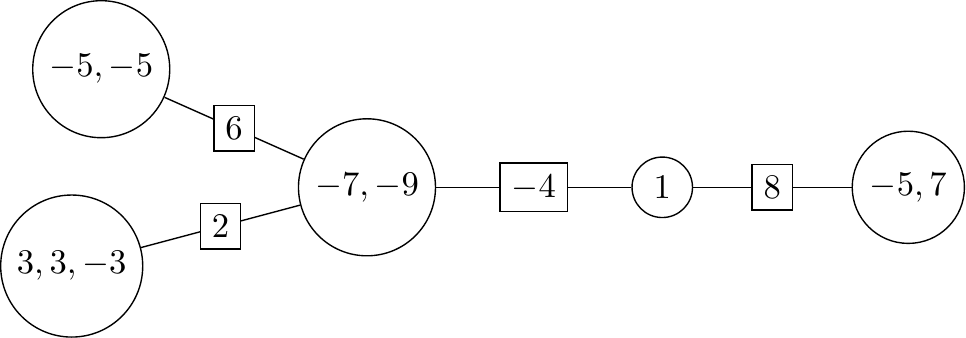}
  \caption{An element of $\mathcal{T}$. The edge labels are drawn inside a square.}\label{lakehurst}
\end{figure}

%
\begin{proposition}\label{bannkreis}
\normalsize There is a size-preserving bijection between $\overline{\mathcal{L}}$ and $\mathcal{T}$.
\end{proposition}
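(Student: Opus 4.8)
The plan is to exploit the structural decomposition of Theorem~\ref{acceptance}, which says that every non-split link in $\mathcal{L}$ is a connected sum of torus links $T(q)$, $q\in\mathbb{Z}$, together with the uniqueness of prime decomposition (Theorem~\ref{masochism}). The key observation is that the connected-sum structure of such a link is recorded by a tree: think of each prime factor $T(q)$ with $|q|\ge 2$ (equivalently $q\in A$ if $q$ is odd, $q\in B$ if $q$ is even, so that $T(q)$ is a proper prime factor) as a "block", and record which blocks are glued along which component. Since $T(q)$ for $q$ even has exactly two components and $T(q)$ for $q$ odd is a knot, the incidence structure of blocks and the components along which connected sums are performed forms a tree — there are no cycles because a connected sum is performed along a $2$-sphere meeting the link in two points, and iterating this on a non-split link yields a tree-like (block-cut-tree-style) decomposition. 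I would make the correspondence precise as follows.

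First I would set up the map $\overline{\mathcal{L}}\to\mathcal{T}$. Given $L\in\overline{\mathcal{L}}$, by Theorem~\ref{acceptance} and Theorem~\ref{masochism} write $L=T(q_1)\#\cdots\#T(q_s)$ with each $q_i$ a proper prime factor (so $q_i\in A\cup B$), uniquely up to reordering; if $L$ is the unknot, it corresponds to the single-vertex tree carrying the label $1$. Build a graph whose vertices are (i) the prime factors $T(q_i)$ with $q_i$ odd (knots), contributing vertex labels in $A$, and whose edges encode (ii) the prime factors $T(q_i)$ with $q_i$ even, which being two-component links serve as "connectors" between the two pieces glued to their two components — hence edge labels in $B$; a knot factor that is glued to several others along its unique component contributes a vertex incident to several connector-edges, and several odd factors glued in sequence to one even factor meet at that edge. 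One has to be slightly careful about how a chain of knot factors $T(q)\#T(q')$ (both odd) with no even factor between them is encoded: this is where the \emph{multiset} of $A$-labels (or the single $1$-label) at a vertex comes in — a maximal "cluster" of odd factors joined directly by connected sum, not separated by any two-component link, collapses to one vertex carrying the multiset of their $q$-values. The size bookkeeping then works out because the crossing number of a connected sum of torus links is additive \cite{diao2004additivity}, so the crossing number of $L$ equals $\sum_i|q_i|$, which is exactly the size assigned to the corresponding tree in $\mathcal{T}$ (the $1$-labels, corresponding to unknot summands which add no crossings, are excluded from the size, matching the definition).

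Next I would construct the inverse map $\mathcal{T}\to\overline{\mathcal{L}}$: given a labelled tree, read off from each vertex its multiset of $A$-labels (each giving a knot factor $T(q)$, $q\in A$) and from each edge its $B$-label (giving a two-component link factor $T(q)$, $q\in B$), and form the connected sum dictated by the incidence structure of the tree, gluing along components as prescribed — this is well-defined precisely because all $T(q)$ are invertible and, as noted after Theorem~\ref{masochism} and used in the proof of Theorem~\ref{acceptance}, the connected sum of invertible links is well-defined once one specifies the equivalence classes of the glued components; the tree incidence data supplies exactly this. That the result is non-split and lies in $\mathcal{L}$ is immediate from Theorem~\ref{acceptance}. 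To see the two maps are mutually inverse I would invoke the uniqueness part of Theorem~\ref{masochism}: the multiset of prime factors of $L$ is an invariant, and the tree structure (which factor is glued to which, along which component) is recovered from $L$ up to the equivalence defining $\mathcal{T}$ — isomorphism of labelled trees — because the only ambiguity in reconstructing the gluing pattern is the one already quotiented out in the definition of $\mathcal{T}$. The main obstacle I anticipate is this last point: showing that two labelled trees give ambient-isotopic links if and only if they are isomorphic as labelled trees, i.e. that no "accidental" coincidences among connected sums of torus links occur beyond relabelling. This requires knowing that $T(q)\ne T(q')$ for $\{q,q'\}$ distinct in $A\cup B$ and $T(q)\ne T(-q)$ for $|q|>2$ (both recalled in the torus-link preliminaries), plus the fact that the connected-sum decomposition faithfully records which component of a two-component factor is glued to what — a point that I would argue from the definition of connected sum along a separating sphere together with the fact that the components of $T(q)$, $q$ even, are equivalent to each other, so the two "ends" of an edge-label are interchangeable, consistent with the tree edge being unoriented.
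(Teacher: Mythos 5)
Your proposal is correct and follows essentially the same route as the paper: vertices of the tree are the components of $L$ (equivalence classes of components of the prime torus-link factors) labelled by the multiset of odd torus-knot factors summed onto them, edges are the two-component factors $T(q)$ with $q$ even, well-definedness and injectivity come from the uniqueness of prime decomposition (Theorem~\ref{masochism}), surjectivity from the explicit recursive reconstruction, and size preservation from additivity of the crossing number under connected sum. The only cosmetic difference is that you initially phrase the vertices as odd prime factors before correcting to the ``cluster''/component formulation, which is exactly the paper's equivalence-class construction.
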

\begin{proof}
%
Let $L\in \overline{\mathcal{L}}$. By Theorem~\ref{acceptance}, there exist prime torus links $T_j=T(q_j)$ such that
\begin{equation*}\label{intensify} L= T_1\# ...\# T_r\end{equation*}
and $L_j=T_1\# ...\# T_j$ is non-split for any $j$.
Let $C_1,C_2....$ be the components of the links $T_1, ..., T_r$, with some arbitrary numbering. Notice that for every $i\in[r]$, $T_{i}$ contains at most two components $C_j$, so there are at most $2r$ of them. For every such component, we write $L(C_j)=T_i$. Each time a connected sum is realised between $L_k$ and $T_{k+1}$, one component of $T_{k+1}$ is identified with a component of $L_k$. Consider the corresponding equivalence relation, i.e., two components $C_i,C_j$ are in the same equivalence class if they are identified in $L$. Let $I_{1},...,I_m$ be the equivalence classes. We define $P(I_j)$ as the multiset of prime torus knots that belong to $I_j$, formally,
 \[P(I_j)\!:=\{(i,q)|\text{~for } i \text{ components }C\in I_j ,\text{ it holds\! } L(C)=T(q), |q|\geq 3,\text{ odd}\}.\]

Let $G_L(V,E)$ be a graph where $V=\{I_1,...,I_m\}$ and there is an edge $I_iI_j$ if and only if there is a link $T_l= T(q_l)$ such that one of its components belongs in $I_i$ and the other belongs in $I_j$. Notice that such a link is unique when it exists, so we can refer to $q_l$ as $q_{ij}$. Moreover, $q_{ij}$ is even. Let $T_L$ be the graph $G_L$, where the vertices $I_i$ have the label $P(I_i)$ and the edges $I_iI_j$ have the label $q_{ij}$ (If $P(I_i)=\emptyset $, the label is 1). Then, $T_L\in \mathcal{T}$.

Let $\phi :\overline{\mathcal{L}}\rightarrow\mathcal{T}$ such that $\phi (L)=T_L$. We first show that $\phi $ is well defined. Suppose that $L_1= T_1\# ...\# T_r=T'_1\# ...\# T'_r=L_2$. Let $G_i^1,G_j^2$ be the components of $L_1$ and $L_2$, corresponding to the equivalence classes $I^1_i,I^2_j$. Since $L_1=L_2$, there is a permutation $\sigma$ of $[n]$, such that there is an ambient isotopy of $\mathbb{R}^3$ that identifies $G_i^1$ with $G^2 _{\sigma (i)}$ for all $i$. Then, the labels on the vertices $I_i^1,I _{\sigma (i)}^2$ are the same, because of the uniqueness of factorisation  in knots (Theorem~\ref{masochism}). Moreover, an edge $I^1_{i}I^1_{j}$ exists if and only if $I^2_{\sigma (i)}I^2_{\sigma (j)}$ exists, and the label on it is the same; otherwise, it holds that
$$L_1\setminus \left\{\bigcup_{h\in[m]\setminus\{i,j\}} G_h^1\right\}\neq L_2\setminus \left\{\bigcup_{h\in[m]\setminus\{\sigma(i),\sigma(j)\}} G_h^2\right\}$$
for some $i,j$. This is a contradiction to the fact that $L_1=L_2$.

We will show that $\phi $ is a bijection. Given $T\in \mathcal{T}$, construct the following link $L_T$: pick $v\in T$ and consider a trivial knot $K_v$. Observe that $v$ has a label that is a multiset of odd numbers $n_i$. Perform all the connected sums between $K_v$ and $T(n_i)$. Then see the labels on the edges $va_i$ neighbouring $v$ and their labels $m_i$, and perform all the connected sums between $K_v$ and $T(m_i)$ ($K_v$ is no longer trivial, unless the label of $v$ was $1$). For each connected sum with $T(m_i)$, consider the component that is not identified with $K_v$. This is a new component that, when seen on its own, is a trivial knot $K_i$. We can then apply the same process beginning from the vertex $a_i$ and the trivial knot $K_i$, looking now at the tree that contains $a_i$ in the forest $T-v$. Continue recursively. By construction $\phi (L_T )=T$, hence $\phi$ is surjective.

Notice that if $\phi (L)=T_L$, then $T_L$ corresponds to a connected sum decomposition of $L$, which defines a link uniquely by Theorem~\ref{masochism}. Consequently $\phi $ is injective.

To see that $\phi$ also preserves size, recall that the crossing number of connected sums of torus links is additive.
\end{proof}

\begin{figure}
\centering
\includegraphics[scale=.55]{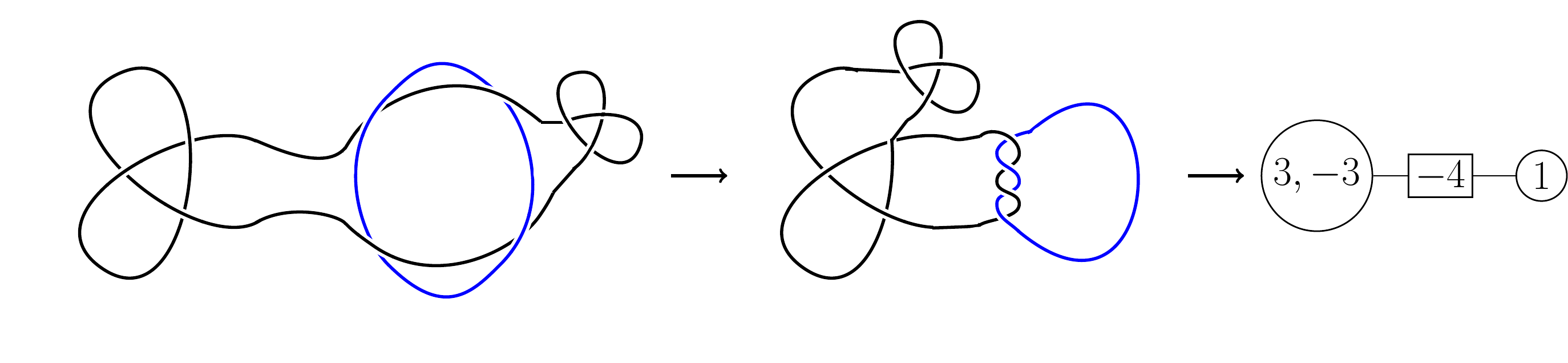}
\caption{An example of the bijection described in Lemma~\ref{bannkreis}. Originally we see a connected sum $T(3)\sharp T(-3)\sharp T(-4)$. We then group $T(3)$ and $T(-3)$ together, since they belong to the same connected component. One can then flip to the right the blue circle and obtain the second picture, which entails the final tree.}
\end{figure}

The latter proposition implies that counting objects in $\overline{\mathcal{L}}$ is equivalent to counting objects in $\mathcal{T}$. We now aim to obtain functional equations that define uniquely the generating functions under study. Let us start with $\overline{K}(z)$, the generating function associated to $\overline{\mathcal{K}}$ (prime torus knots $T(2i+1),$ $i\in \mathbb{Z}\backslash\{0,-1\}$), where $z$ marks crossings. Immediately,
$$\overline{K}(z)= 2\cdot\sum _{i\geq 1}z^{2i+1}=\frac{2z^3}{1-z^2}.$$
Moreover, every object in $\mathcal{K}$ is defined uniquely by a multiset of prime torus knots, therefore $\mathcal{K}=\mathrm{Mset}(\overline{\mathcal{K}})$ and
\begin{equation}\label{mechanics}
K(z)=\exp \Bigg( \sum _{k\geq 1}\frac{1}{k}\overline{K}(z^k)\Bigg)=\exp \Bigg( \sum _{k\geq 1}\frac{1}{k}\frac{2z^{3k}}{1-z^{2k}}\Bigg).
\end{equation}
The first terms of $K(z)$ are the following:
\[K(z)=1+2\,{z}^{3}+2\,{z}^{5}+3\,{z}^{6}+2\,{z}^{7}+4\,{z}^{8}+6\,{z}^{9}+7
\,{z}^{10}+8\,{z}^{11}+13\,{z}^{12}+14\,{z}^{13}+19\,{z}^{14}+26\,{z}^
{15}+\cdots\]

To give a visual representation of what these knots look like, recall that any torus knot $T(q)$ can be represented on the plane by the graph $\hat{C}_{|q|}$. For instance, you can see $T(3)$ in Figure~\ref{frivolity} (known also as \emph{trefoil knot}). If we flip all its crossings we obtain $T(-3)$. This explains the term $2z^3$. Note that all the even terms are composite knots, for instance $3z^6$ comes from $T(3)\sharp T(3), T(-3)\sharp T(-3)$, and $T(3)\sharp T(-3)$.

Let $\mathcal{E}$ denote the combinatorial class of all possible edge labels. Then its generating function is
$$E(z) = z^2+ \frac{2z^4}{1-z^2}.$$
We will now obtain the generating function associated to $\overline{\mathcal{L}}$, denoted by $\overline{L}(z)$. To that end, we will use two standard combinatorial theorems, namely P\'olya's Enumeration Theorem and the Dissymmetry Theorem for trees. Both can be found in~\cite{bergeron1998combinatorial}.

\begin{proposition}
Let $\mathcal{F}=\mathcal{G}^{\bullet }\circ (\mathcal{E}\times \mathcal{K}) $, where $\mathcal{G}$ is the class of unrooted, unlabelled trees (counted according to vertices), and denote by $F(z)$ the generating function associated to $\mathcal{F}$. Then,
\begin{equation}\label{alertness}
\overline{L}(z)=\frac{F(z)}{E(z)}+\frac{E(z)}{2}\bigg( -\frac{F(z)^2}{E(z)^2}+\frac{F(z^2)}{E(z^2)}\bigg).
\end{equation}
\end{proposition}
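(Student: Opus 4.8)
The plan is to identify $\overline{\mathcal{L}}$ with the class $\mathcal{T}$ via Proposition~\ref{bannkreis}, and then to recognize $\mathcal{T}$ as a class of unrooted trees whose vertices carry multisets of ``knot labels'' and whose edges carry ``link labels''. Concretely, an object of $\mathcal{T}$ is an unrooted unlabelled tree where each vertex is decorated by an object of $\mathcal{E}\times\mathcal{K}$ — wait, more precisely each vertex carries an element of $\mathcal{K}$ (its multiset of prime odd torus knots, with the convention that the empty multiset is the ``$1$'' label) and each edge carries an element of $\mathcal{E}$ (an even link label). The size is the total crossing number, which by additivity is the sum of the sizes of all vertex- and edge-decorations. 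So the first step is to massage this into the standard rooted-tree framework: since each edge is incident to a vertex, I would push each edge label onto one of its endpoints, so that a vertex together with its ``parent edge'' becomes a single compound atom of type $\mathcal{E}\times\mathcal{K}$; the unique vertex with no parent edge then needs separate treatment, which is exactly why a root must be introduced and why the Dissymmetry Theorem enters.

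First I would recall the Dissymmetry Theorem for trees: for a class of trees $\mathcal{A}$ one has the virtual-species identity $\mathcal{A} + \mathcal{A}^{\circ\text{-}\circ} \;=\; \mathcal{A}^{\bullet} + \mathcal{A}^{\bullet\!-\!\bullet}$, relating unrooted trees, edge-rooted trees, vertex-rooted trees, and oriented-edge-rooted trees. Applied here, with vertices decorated by $\mathcal{K}$ and edges decorated by $\mathcal{E}$, the vertex-pointed class is precisely $\mathcal{F}=\mathcal{G}^{\bullet}\circ(\mathcal{E}\times\mathcal{K})$: pointing a vertex makes the tree a rooted tree; hanging off the root vertex are subtrees, each attached by an edge, and a subtree-with-its-attaching-edge is recursively a vertex-decorated rooted tree whose root atom is enlarged by an $\mathcal{E}$-label, i.e. an object of $(\mathcal{E}\times\mathcal{K})$ composed into $\mathcal{G}^{\bullet}$. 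So $F(z)$ is the GF of vertex-rooted objects. The edge-rooted class $\mathcal{T}^{\circ\text{-}\circ}$ counts objects with a distinguished unoriented edge: such an object is an unordered pair of rooted subtrees glued along that edge, and the edge carries an $\mathcal{E}$-label, giving GF $E(z)\cdot\big(\tfrac12 (F(z)/E(z))^2 + \tfrac12 (F(z^2)/E(z^2))\big)$ — here the $F/E$ appears because each of the two subtrees is a $\mathcal{F}$-object whose root atom I must \emph{not} double-count the $\mathcal{E}$-label of the central edge, so I divide out one copy of $E(z)$; the $\tfrac12(\cdots)^2 + \tfrac12(\cdot)(z^2)$ is the Pólya/$\mathrm{MSet}_2$ operator accounting for the symmetry swapping the two halves. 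The oriented-edge-rooted class $\mathcal{T}^{\bullet\!-\!\bullet}$ is just the ordered version, GF $E(z)\cdot (F(z)/E(z))^2 = F(z)^2/E(z)$. Solving the Dissymmetry identity for $\overline{L}(z) = T(z) = F(z) + F(z^2)\cdot(\text{stuff}) - \ldots$ — actually $T(z) = F(z) + \mathcal{T}^{\bullet\!-\!\bullet}(z) - \mathcal{T}^{\circ\text{-}\circ}(z)$ — and substituting the two expressions above yields exactly
\begin{equation*}
\overline{L}(z)=\frac{F(z)}{E(z)}+\frac{E(z)}{2}\bigg( -\frac{F(z)^2}{E(z)^2}+\frac{F(z^2)}{E(z^2)}\bigg),
\end{equation*}
wait — I should double check the leading term; if done correctly the $F(z)$ and the $F(z)^2/E(z)$ terms must combine so that the $\tfrac{F(z)}{E(z)}$ appearing is the honest contribution, but the stated formula has $F/E$ rather than $F$ in the first slot, which signals that the root \emph{vertex} atom in $\mathcal{F}$ also carries a spurious $\mathcal{E}$-factor that must be stripped; I would reconcile this by defining the decorated-tree class so that \emph{every} vertex, including a pointed root, is equipped with an $\mathcal{E}$-slot (filled trivially for the true root), making $\mathcal{F}$ uniform, and then the Dissymmetry bookkeeping produces precisely the displayed identity.

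The main obstacle, and the step I would be most careful about, is the symmetry/Pólya accounting in the edge-rooted term: when the two rooted halves glued along the central edge are isomorphic as decorated trees, the swap is a nontrivial automorphism, and this is exactly what forces the $\tfrac12\big(-(F(z)/E(z))^2 + F(z^2)/E(z^2)\big)$ shape — the $F(z^2)/E(z^2)$ term is the ``diagonal'' contribution from Pólya's theorem applied to the transposition action on a $2$-element set, i.e. $\mathrm{MSet}_2(\mathcal{X})$ has GF $\tfrac12(X(z)^2 + X(z^2))$. Getting the signs right after substituting into the Dissymmetry identity (which has the unpointed and edge-rooted classes on the \emph{same} side) is where a careless error would creep in, so I would verify the first several coefficients of $\overline{L}(z)$ against a direct enumeration of small non-split $K_4$-minor-free links. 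A secondary subtlety is the ``$1$-label'' convention: a vertex with empty knot-multiset must still be allowed (it corresponds to an unknotted component that merely serves as a connector), and one must check that the GF $K(z)$ of $\mathcal{K}$, with its constant term $1$, correctly encodes this, and that $\mathcal{E}\times\mathcal{K}$ with $K(0)=1$ does not accidentally allow an ``empty'' compound atom that would make the tree recursion ill-founded — it does not, because $E(0)=0$ since every edge label has size $\ge 2$, so each non-root atom has positive size and $\mathcal{G}^{\bullet}\circ(\mathcal{E}\times\mathcal{K})$ is well-defined as a power series.
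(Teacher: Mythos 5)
Your strategy coincides with the paper's: identify $\overline{\mathcal{L}}$ with $\mathcal{T}$, push each edge label onto the endpoint farther from the root so that $\mathcal{F}=\mathcal{G}^{\bullet}\circ(\mathcal{E}\times\mathcal{K})$ counts vertex-rooted decorated trees carrying one spurious $\mathcal{E}$-factor at the root (hence $T^{\bullet}(z)=F(z)/E(z)$), express the edge-rooted class as an $\mathcal{E}$-labelled unordered pair of such objects and the oriented-edge-rooted class as the ordered pair, and finish with the Dissymmetry Theorem. Your three pointed generating functions are all correct and agree with the paper's: the paper's $\frac{E(z)}{2}\big(\frac{F(z)^2}{E(z)^2}-\frac{F(z^2)}{E(z^2)}\big)+\frac{E(z)F(z^2)}{E(z^2)}$ simplifies to your $E(z)\big(\tfrac12\frac{F(z)^2}{E(z)^2}+\tfrac12\frac{F(z^2)}{E(z^2)}\big)$, and your observation that $K(0)=1$ causes no trouble because $E(0)=0$ is also right.

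There is, however, a genuine error in the one step you yourself flagged as delicate: you state the Dissymmetry Theorem with the edge-rooted and oriented-edge-rooted classes on the wrong sides. The correct identity is $\mathcal{A}+\mathcal{A}^{\bullet\rightarrow\bullet}=\mathcal{A}^{\bullet}+\mathcal{A}^{\bullet-\bullet}$, i.e.\ unrooted plus \emph{oriented}-edge-rooted equals vertex-rooted plus \emph{unoriented}-edge-rooted, so that $T=T^{\bullet}+T^{\bullet-\bullet}-T^{\bullet\rightarrow\bullet}$ adds the unordered-pair class and subtracts the ordered-pair class. You put the unrooted and (unoriented) edge-rooted classes on the same side and accordingly use $T=T^{\bullet}+(\text{ordered pairs})-(\text{unordered pairs})$; substituting your own (correct) expressions into that identity yields $\frac{F(z)}{E(z)}+\frac{E(z)}{2}\big(\frac{F(z)^2}{E(z)^2}-\frac{F(z^2)}{E(z^2)}\big)$, which is the proposition with the correction term negated, so your claim that the substitution ``yields exactly'' the displayed formula does not follow from your setup. (The path on three vertices already refutes your version of the identity: it gives $1+1=2+2$.) The fix is purely to restore the correct orientation of the Dissymmetry Theorem; with that change your argument goes through and reproduces the paper's proof.
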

\begin{proof}
We would like to associate an object of $\mathcal{K}$ and an object of $\mathcal{E}$ to each vertex of a tree $T\in\mathcal{G}^\bullet$. However, we cannot encode this with generating functions by a direct composition $G(K\cdot E)$. The reason is that the vertices of $T$ may not all be distinguishable. Therefore, we will use cycle index sums\footnote{The cycle index series of a combinatorial class $\mathcal{F}$ is the formal power series (in an infinite number of variables) $Z_F(x_1,x_2,x_3,...)=\sum _{n\geq 0}\frac{1}{n!}\big( \sum _{\sigma \in \mathcal{S}_n}\mathrm{fix} F[\sigma ]x_1^{\sigma _1}x_2^{\sigma _2}x_3^{\sigma _3}\cdots \big) $, where $\mathcal{S}_n$ denotes the group of permutations of $[n]$, $\sigma _i$ is the number of cycles of length $i$ in $\sigma$, and $\mathrm{fix}F[\sigma ]$ is the number of objects in $\mathcal{F}$ for which $\sigma $ is an automorphism. For more details on this setting, see~\cite[Chapter 1]{bergeron1998combinatorial}.}. The cycle index sum of $\mathcal{G}^{\bullet}$ is known to satisfy the following functional equation in infinitely many variables (see~\cite[Chapter 4.1]{bergeron1998combinatorial}):
\begin{equation}\label{inventory}
\mathcal{Z}_{\mathcal{G}^{\bullet}}(s_1,s_2,...)=s_1\exp \bigg( \sum _{k\geq 1}\frac{1}{k}\mathcal{Z}_{\mathcal{G}^{\bullet}}(s_k,s_{2k},...) \bigg) .
\end{equation}
We can now obtain the ordinary generating function of $\mathcal{F}=\mathcal{G}^{\bullet }\circ (\mathcal{E}\times \mathcal{K}) $. By P\'olya's Enumeration Theorem (see also~\cite[Section 1.4, Theorem 2]{bergeron1998combinatorial}), the latter satisfies the equation
$F(z)=\mathcal{Z}_{\mathcal{G}^{\bullet}}(f(z),f(z^2),...)$, where $f(z)=E(z)K(z)$.

A tree $T\in \mathcal{F}$ is equivalent to a tree in $\mathcal{T}^\bullet$ (pointing on a vertex) where all labels are on the vertices; a label on an edge $e$ is on the vertex in $e$ that is furthest from the root, and the root-vertex has an extra edge label. We eliminate the extra label from the enumeration, dividing $F(z)$ by $E(z)$. We obtain $T^\bullet (z)=\frac{F(z)}{E(z)}$.

We can obtain an expression for $T(z)$ by $T^\bullet (z)$, using the Dissymmetry Theorem for Trees.
Given a family of trees $\mathscr{T}$ denote by $\mathscr{T}^{\bullet}$, $\mathscr{T}^{\bullet-\bullet}$, and $\mathscr{T}^{\bullet\rightarrow\bullet}$ be the same family with a rooted vertex, a rooted edge and a rooted and oriented edge. Let $T(z)$, $T^{\bullet}(z)$, $T^{\bullet-\bullet}(z)$, and $T^{\bullet\rightarrow \bullet} (z)$ the corresponding generating functions. Then, the Dissymmetry Theorem for trees states that
\begin{equation}T(z)=T^{\bullet}(z)+T^{\bullet-\bullet} (z)-T^{\bullet\rightarrow \bullet} (z).\label{treedis}\end{equation}
In $\mathcal{T}$, it holds that $T^{\bullet}(z)=\frac{F(z)}{E(z)}$, $T^{\bullet\rightarrow \bullet} (z)=\frac{E(z)F(z)^2}{E(z)^2} $, and \[T^{\bullet-\bullet} (z)=\frac{E(z)}{2}\bigg( \frac{F(z)^2}{E(z)^2}-\frac{F(z^2)}{E(z^2)}\bigg) +\frac{E(z)F(z^2)}{E(z^2)},\] where the common factor $E(z)$ encodes the label of the rooted edge. Substituting these expressions in Equation~\eqref{treedis} and using Proposition~\ref{bannkreis}, we obtain the indicated relation for $T(z)$ and then for $\overline{L}(z)$.
\end{proof}

The first terms of $\overline{L}(z)$ are the following:
$$\overline{L}(z)=1+{z}^{2}+2\,{z}^{3}+3\,{z}^{4}+4\,{z}^{5}+9\,{z}^{6}+12\,{z}^{7}+26
\,{z}^{8}+40\,{z}^{9}+82\,{z}^{10}+136\,{z}^{11}+280\,{z}^{12}+\dots
$$
\begin{lemma}\label{lstar}
It holds that $\hat{L}(z)= \exp \left(\overline{L}(z)-1\right) \exp \left( \sum _{k\geq 2}\frac{1}{k}\left( \overline{L}\left(z^k\right)-1\right)\right)$.
\end{lemma}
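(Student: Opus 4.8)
The plan is to describe $\hat{\mathcal{L}}$ as a multiset of non-split links with no trivial disjoint component, and then to relate such non-split links to $\overline{\mathcal{L}}$. Recall that every link-type $L\in\mathcal{L}$ decomposes uniquely (up to reordering) as a disjoint sum of its disjoint components, each of which is non-split; this is essentially built into the definition of $\mathbf{dcl}$ together with the uniqueness of the prime decomposition (Theorem~\ref{masochism}) applied componentwise. Thus $\mathcal{L}=\mathrm{Mset}(\overline{\mathcal{L}})$, but we must be careful about the size function, since the size on $\mathcal{L}$ is the number of edges $m$ of a minimal diagram, which counts a vertex-less edge ($\hat{C}_0$) for every trivial disjoint component, whereas the size on $\overline{\mathcal{L}}$ is the crossing number $n$. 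On a minimal diagram the number of edges equals $2n$ (by $4$-regularity, a diagram with $n$ vertices has $2n$ edges) plus the number of trivial components contributing vertex-less edges; so for a non-trivial non-split link $L$ with crossing number $n\geq 2$ we have $m=2n$, while the trivial knot (unknot) contributes $m=1$.

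First I would make precise the combinatorial classes: let $\overline{\mathcal{L}}_{\ast}$ denote the set of non-split link-types in $\mathcal{L}$ that are \emph{not} the unknot, equivalently $\overline{\mathcal{L}}$ with the single object of crossing number $0$ (the unknot) removed, so that $\overline{L}_{\ast}(z)=\overline{L}(z)-1$ when counting by crossing number. The class $\hat{\mathcal{L}}$ of links with no trivial disjoint component is then exactly $\mathrm{Mset}(\overline{\mathcal{L}}_{\ast})$: a link with no trivial disjoint component is a disjoint sum of non-split, non-trivial links, and conversely, and this correspondence is a size-preserving bijection once we agree that the size of an element of $\hat{\mathcal{L}}$ is $n$, the number of vertices of its minimal diagram, which is additive over disjoint components and matches the crossing number on each non-split factor. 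Applying the $\mathrm{Mset}$ construction from Table~\ref{exchanged},
\[
\hat{L}(z)=\exp\left(\sum_{k\geq 1}\frac{1}{k}\,\overline{L}_{\ast}(z^k)\right)
=\exp\left(\sum_{k\geq 1}\frac{1}{k}\bigl(\overline{L}(z^k)-1\bigr)\right),
\]
and separating the $k=1$ term gives exactly the claimed identity
$\hat{L}(z)=\exp\bigl(\overline{L}(z)-1\bigr)\exp\bigl(\sum_{k\geq 2}\frac{1}{k}(\overline{L}(z^k)-1)\bigr)$.

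The step I expect to require the most care is the bookkeeping of the size function, specifically checking that on a minimal diagram of a non-split, non-trivial link the number of edges is $2n$ and that there are no vertex-less edges — this uses that a non-split link has a connected diagram, hence its minimal diagram is connected and $4$-regular with no isolated loops, so $|E|=2|V|$ — and that the minimal diagram of a disjoint sum is the disjoint union of minimal diagrams of the summands, so that the size is genuinely additive and the $\mathrm{Mset}$ translation is legitimate. Once this is settled, the identity is a direct application of the Symbolic Method, so the remainder of the argument is routine. (One should also note that $\overline{L}(z)$ itself is being used here with $z$ marking crossing number, consistently with the previous proposition, so the substitution $z\mapsto z^k$ in the $\mathrm{Mset}$ formula is the correct one.)
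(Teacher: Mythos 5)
Your proposal is correct and takes essentially the same route as the paper, which simply observes that links in $\hat{\mathcal{L}}$ are multisets of links in $\overline{\mathcal{L}}$ excluding the trivial knot and applies the $\mathrm{Mset}$ translation. Your additional bookkeeping about edge counts is harmless but not needed here, since $\hat{\mathcal{L}}$ is counted by crossing number, which is manifestly additive over disjoint components.
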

\begin{proof}
Immediate, since links in $\hat{\mathcal{L}}$ are multisets of links in $\overline{\mathcal{L}}$, excluding the trivial knot.
\end{proof}

The first terms of $\hat{L}(z)$ are the following:
\[\hat{L}(z)=1+{z}^{2}+2\,{z}^{3}+4\,{z}^{4}+6\,{z}^{5}+16\,{z}^{6}+24\,{z}^{7}+56
\,{z}^{8}+98\,{z}^{9}+208\,{z}^{10}+382\,{z}^{11}+805\,{z}^{12}+\dots
\]
We would like to study $K_4$-free link types by the number of edges of a minimal diagram, so as to account also for trivial components. We obtain the following lemma.
\begin{lemma}\label{finalfunc}
For the combinatorial class $\mathcal{L}$ with size equal to the number of edges in a minimal diagram, it holds that
\[L(z)=\frac{\hat{L}(z^2)}{1-z}-1.\]
\end{lemma}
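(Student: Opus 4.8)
The plan is to display $\mathcal{L}$ as, essentially, the product of $\hat{\mathcal{L}}$ with the set of finite collections of trivial components, and then to read off the claimed identity by tracking the edge-size through this decomposition. First I would record the structural input: every link $L\in\mathcal{L}$ other than the empty link decomposes uniquely as a disjoint sum $L=L'\cupdot K^{\cupdot k}$, where $K$ denotes the unknot, $K^{\cupdot k}$ its $k$-fold disjoint sum ($k\ge 0$), and $L'$ is the disjoint sum of the non-trivial split components of $L$; thus $L'\in\hat{\mathcal{L}}$ (it has no trivial disjoint components), where we allow $L'$ to be the empty link. Uniqueness is the uniqueness of the splitting of a link into its split components, and conversely any such pair $(L',k)$ produces a link in $\mathcal{L}$, since $\mathcal{L}={\bf dcl}({\cal T}_2)$ (Theorem~\ref{acceptance}) is closed under disjoint sums. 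The only pair that does not correspond to a nonempty link is $(\varnothing,0)$. Identifying a sequence of $k$ copies of $K$ with the integer $k$, this yields a bijection between $\mathcal{L}$ and $\bigl(\hat{\mathcal{L}}\times\mathbb{Z}_{\ge 0}\bigr)\setminus\{(\varnothing,0)\}$.

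Next I would translate the sizes. A minimal diagram of $L=L'\cupdot K^{\cupdot k}$ can be taken to be the disjoint union of a minimal diagram of $L'$ with $k$ copies of $\hat{C}_{0}$: since the crossing number is additive over disjoint sums (as already used in this section, adding trivial components leaves the crossing number unchanged), a diagram realising the minimum number of vertices is obtained componentwise, and in such a diagram every trivial split component must be drawn crossing-free, i.e.\ as the vertex-less edge $\hat{C}_{0}$. Hence $m(L)=m(L')+k$. Moreover, if $L'\in\hat{\mathcal{L}}$ has crossing number $n$ --- its size in the class $(\hat{\mathcal{L}},n)$ --- then a minimal diagram of $L'$ contains no vertex-less edge, because $L'$ has no trivial disjoint component, so its underlying graph is $4$-regular on $n$ vertices and therefore (by the handshake identity $4n=2|E|$) has exactly $2n$ edges; thus $m(L')=2\,n(L')$. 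Combining, the pair $(L',k)$ associated to $L$ satisfies $m(L)=2\,n(L')+k$.

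It then remains to assemble generating functions. Summing $z^{m(L)}$ over $L\in\mathcal{L}$ through the bijection gives
\[
L(z)=\sum_{(L',k)\neq(\varnothing,0)} z^{\,2\,n(L')+k}
=\Bigl(\sum_{L'\in\hat{\mathcal{L}}} z^{\,2\,n(L')}\Bigr)\cdot\Bigl(\sum_{k\ge 0} z^{k}\Bigr)-1
=\frac{\hat{L}(z^{2})}{1-z}-1,
\]
where $\sum_{L'\in\hat{\mathcal{L}}} z^{2\,n(L')}=\hat{L}(z^{2})$ encodes the doubling $m(L')=2\,n(L')$ through the substitution $z\mapsto z^{2}$, the factor $1/(1-z)$ is the generating function of $\mathrm{Seq}(\{K\})$ (one atom per edge of the $\hat{C}_{0}$'s), and the $-1$ discounts the contribution $z^{0}=1$ of the excluded pair $(\varnothing,0)$, equivalently of the empty link, which is not a member of $\mathcal{L}$.

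The step I expect to be the main obstacle is the size translation: one has to be certain that ``the number of edges of a minimal diagram'' is a well-defined size on $\mathcal{L}$ and is additive across a disjoint sum, which amounts to checking that minimising the number of vertices is compatible with the split decomposition and forces each trivial split component to become a single $\hat{C}_{0}$. This in turn rests on the additivity of the crossing number under disjoint sums (already invoked in the text) together with the trivial handshake count for $4$-regular maps; once these are granted, the remainder is routine manipulation with the Sequence/Multiset dictionary of Table~\ref{exchanged}.
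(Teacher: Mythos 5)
Your proof is correct and takes essentially the same route as the paper, which simply states the result as immediate from the facts that a minimal diagram of a link in $\hat{\mathcal{L}}$ with $n$ crossings has $2n$ edges and that the number of added trivial components (each a vertex-less edge) is then uniquely determined. Your write-up just makes explicit the bijection, the size bookkeeping, and the subtraction of the empty link that the paper leaves implicit.
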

\begin{proof}
Immediate, since a link-diagram  of $n$ vertices without trivial components has $2n$ edges and there is one choice for the number of trivial components that are added. We also remove $1$ as there are not objects in $\mathcal{L}$ with 0 edges. \end{proof}

The first terms of $L(z)$ are the following:
\[L(z)=z+{z}^{2}+{z}^{3}+2\,{z}^{4}+2\,{z}^{5}+4\,{z}^{6}+4\,{z}^{7}+8\,{z
}^{8}+8\,{z}^{9}+14\,{z}^{10}+14\,{z}^{11}+30\,{z}^{12}+30\,{z}^{13}+\cdots
\]

\subsection{Asymptotic analysis}
We proceed now to get asymptotic estimates from the previous generating functions.

\begin{theorem}\label{transform}
The following asymptotic estimates hold:
\begin{align*}
[z^n]\overline{L}(z)\sim \frac{c_1}{\Gamma (-3/2)}\cdot n^{-5/2}\cdot\rho ^{-n},\,\,\mbox{$[z^n]$}\hat{L}(z)\sim \frac{c_2}{\Gamma (-3/2)}\cdot n^{-5/2}\cdot\rho ^{-n},
 \end{align*}
where $c_1,c_2$ are constants and $\Gamma$ is the Gamma function; in particular, $ \rho\approx 0.44074$ ($\rho^{-1}\approx 2.26891$), $c_1\approx 23.46469$, $c_2\approx 58.99565$.
\end{theorem}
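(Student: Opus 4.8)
The plan is to apply singularity analysis (the Transfer Theorem, Equation~\eqref{everybody}) to the generating functions $\overline{L}(z)$ and $\hat{L}(z)$, so the whole argument reduces to locating the dominant singularity $\rho$ and extracting the singular exponent and the leading coefficient of the singular expansion. The key observation is that all the functions built in the previous subsection ($\overline{K}$, $K$, $E$, $F$, and then $\overline{L}$ and $\hat{L}$) are assembled out of finitely many rational functions, exponentials of convergent sums, and one genuinely implicit ingredient: the cycle-index recursion~\eqref{inventory} for pointed trees $\mathcal{G}^\bullet$, which yields $F(z)=\mathcal{Z}_{\mathcal{G}^\bullet}(f(z),f(z^2),\dots)$ with $f(z)=E(z)K(z)$. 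So the first step is to isolate the dominant singularity of $F$.

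First I would analyse $y(z):=F(z)$, which by~\eqref{inventory} satisfies the functional equation $y = z_1 \exp\!\bigl(\sum_{k\ge 1}\tfrac1k \mathcal{Z}_{\mathcal{G}^\bullet}(f(z^k),\dots)\bigr)$; separating the $k=1$ term, $F(z)=f(z)\exp\!\bigl(F(z)+\sum_{k\ge 2}\tfrac1k \mathcal{Z}_{\mathcal{G}^\bullet}(f(z^k),\dots)\bigr)$. Writing $F(z)=\Phi(z,F(z))$ with $\Phi(z,y)=f(z)\,e^{y}\cdot g(z)$ where $g(z)=\exp\!\bigl(\sum_{k\ge 2}\tfrac1k\mathcal{Z}_{\mathcal{G}^\bullet}(f(z^k),\dots)\bigr)$ is analytic and has nonnegative coefficients in a disk strictly larger than the one we care about (its singularities come from $f(z^k)$, $k\ge 2$, hence sit at radius $\ge \sqrt{\cdot}$), this is exactly the smooth implicit-function setup of Theorem~\ref{drmota}. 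One checks $\Phi(0,y)\equiv 0$, $\Phi(z,0)\not\equiv 0$, all coefficients nonnegative, and then solves the characteristic system $\{\,y=\Phi(z,y),\ 1=\partial_y\Phi(z,y)\,\}$; since $\partial_y\Phi=\Phi$, the second equation is simply $y=1$, so $\rho$ is the unique positive root of $1=\Phi(z,1)=f(z)g(z)e$, i.e. $f(\rho)g(\rho)=e^{-1}$, with $s=1$. Theorem~\ref{drmota} then gives $F(z)=q(z)+h(z)\sqrt{1-z/\rho}$ near $\rho$ with $h_0=\sqrt{2\rho\,\partial_z\Phi(\rho,1)/\partial_{yy}\Phi(\rho,1)}$, hence a square-root singularity, singular exponent $1/2$ — wait, here the relevant exponent for $F$ is $1/2$, but what we need is $-5/2$ for the coefficients of $\overline{L}$, so one more differentiation effectively occurs through the structure of~\eqref{alertness}.

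Second, I would propagate this to $\overline{L}$ and $\hat{L}$. From~\eqref{alertness}, $\overline{L}(z)=F(z)/E(z)+\tfrac{E(z)}{2}\bigl(-F(z)^2/E(z)^2+F(z^2)/E(z^2)\bigr)$; the terms $E(z)$, $E(z^2)$, $F(z^2)$ are analytic at $\rho$ (since $E$ is rational with poles at $\pm1$ and $F(z^2)$ is singular only at $\sqrt{\rho}>\rho$), while $F(z)$ and $F(z)^2$ contribute the singularity. Plugging $F=q+h\sqrt{1-z/\rho}$ into the quadratic term shows the coefficient of $Z=\sqrt{1-z/\rho}$ cancels between the linear-in-$F$ and quadratic-in-$F$ parts precisely when $q(\rho)=E(\rho)$ — which holds because $s=1$ forces $F(\rho)=q(\rho)$ and $\Phi(\rho,1)=1$ makes $F(\rho)=E(\rho)K(\rho)\cdot$(something)$=\cdots$; the upshot is that the $Z^1$ term of $\overline{L}$ vanishes and the first odd term is $Z^3$, giving singular exponent $3/2$ and, by~\eqref{everybody}, $[z^n]\overline{L}(z)\sim \tfrac{c_1}{\Gamma(-3/2)}n^{-5/2}\rho^{-n}$, where $c_1=\overline{L}_3$ is the $Z^3$-coefficient, explicitly computable from $h_0$, $q'(\rho)$, $h_1$, and the values of $E,E',F(z^2)$ at $\rho$ (done in the \texttt{Maple} session). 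For $\hat{L}(z)=\exp(\overline{L}(z)-1)\exp\bigl(\sum_{k\ge2}\tfrac1k(\overline{L}(z^k)-1)\bigr)$ from Lemma~\ref{lstar}, the exponential factor $\exp(\sum_{k\ge2}\cdots)$ is analytic at $\rho$, and $\exp(\overline{L}(z)-1)$ has the same dominant singularity $\rho$ with the same type (composition of an analytic function with a function having a $Z^3$ singularity preserves the $Z^3$-type), so $[z^n]\hat{L}(z)\sim\tfrac{c_2}{\Gamma(-3/2)}n^{-5/2}\rho^{-n}$ with $c_2 = e^{\overline{L}(\rho)-1}\exp(\sum_{k\ge2}\tfrac1k(\overline{L}(\rho^k)-1))\cdot c_1$. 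Finally one must verify uniqueness of $\rho$ on the circle $|z|=\rho$: this follows from aperiodicity of the trees (the support of the coefficient sequence of $F$, hence of $\overline{L}$, generates $\mathbb{Z}$, e.g. because objects of consecutive sizes exist — visible already in the listed first terms $1+z^2+2z^3+3z^4+\cdots$), so Theorem~\ref{drmota}'s last clause applies and the Transfer Theorem gives the stated single-term asymptotics; the numerical values $\rho\approx0.44074$, $c_1\approx23.46469$, $c_2\approx58.99565$ are then obtained by solving $f(\rho)g(\rho)=e^{-1}$ numerically and evaluating the closed forms.

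I expect the main obstacle to be the bookkeeping around the cancellation of the $Z^1$ term in~\eqref{alertness}: one has to show cleanly that the non-analytic contributions of $F(z)/E(z)$ and $-\tfrac{1}{2E(z)}F(z)^2$ cancel at first order, which hinges on the identity $F(\rho)=E(\rho)$ forced by the characteristic system ($s=1$ and $\Phi(\rho,1)=1$ together with the precise form $\Phi(z,y)=E(z)K(z)g(z)e^y$ give $F(\rho)=E(\rho)K(\rho)g(\rho)e=E(\rho)$ after using $K(\rho)g(\rho)e = 1$... more carefully, $1=\Phi(\rho,1)=E(\rho)K(\rho)g(\rho)e$ so $K(\rho)g(\rho)=1/(eE(\rho))$, whence $F(\rho)=E(\rho)K(\rho)g(\rho)e=1$? — this needs to be reconciled, and getting the constant $q(\rho)$ exactly right is the delicate point). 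Everything else — checking the hypotheses of Theorem~\ref{drmota}, confirming the competing singularities of $F(z^2),\overline{L}(z^k)$ lie farther out, and confirming aperiodicity — is routine, and the final constants are a computer-algebra evaluation.
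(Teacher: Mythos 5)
Your proposal follows essentially the same route as the paper: rewrite the cycle-index recursion as $F(z)=f(z)\exp\bigl(\sum_{k\ge1}\tfrac1k F(z^k)\bigr)$, peel off the $k\ge2$ tail into an auxiliary factor analytic beyond $\rho$, apply Theorem~\ref{drmota} to $y=\xi(z)e^{y}$, track the cancellation of the first-order singular term through the dissymmetry formula~\eqref{alertness}, and transfer; the treatment of $\hat L$ via Lemma~\ref{lstar} is also the paper's.

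The one point you flag as ``needing to be reconciled'' is a genuine (if small) gap, and your tentative condition $q(\rho)=E(\rho)$ is the wrong one. Writing $F=F_0+F_1Z+O(Z^2)$ with $Z=\sqrt{1-z/\rho}$ and expanding $\overline{L}=F/E-F^2/(2E)+E\,F(z^2)/(2E(z^2))$ at $\rho$ (the third summand being analytic there), the coefficient of $Z$ is $\frac{F_1}{E(\rho)}-\frac{2F_0F_1}{2E(\rho)}=-\frac{F_1}{E(\rho)}\,(F_0-1)$, so the cancellation requires exactly $F_0=F(\rho)=1$, not $F(\rho)=E(\rho)$ (which is false numerically: $E(\rho)\approx 0.29$). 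And $F(\rho)=1$ is precisely what you already derived from the characteristic system: since $\partial_y G=G$ for $G(z,y)=\xi(z)e^{y}$, the equations $s=G(\rho,s)$ and $1=\partial_yG(\rho,s)$ force $s=F(\rho)=1$, equivalently $\xi(\rho)=e^{-1}$. This is exactly the identity the paper uses. With that substitution your argument closes; the remaining items (analyticity of the $k\ge2$ tail at $\rho$, the competing singularities of $F(z^2)$ and $\overline{L}(z^k)$ lying at radius at least $\sqrt{\rho}>\rho$, aperiodicity, and the numerical evaluation of $\overline{L}_3$ and of the analytic prefactor in $\hat L$) are handled as you describe and as in the paper.
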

\begin{proof}
Let $f(z)=E(z)K(z)$. Since $F(z)=\mathcal{Z}_{\mathcal{G}^{\bullet}}(f(z),f(z^2),...)$ and the cycle index sum relation~\eqref{inventory} holds, $F(z)$ satisfies the implicit equation
\[F(z)=f(z)\exp \bigg( \sum _{k\geq 1}\frac{1}{k}F(z^k)\bigg).\]
Let $\xi(z)=f(z)\exp \big( \sum _{k\geq 2} \frac{1}{k} F(z^k)\big) $ and $\rho,\rho_\xi$ be the smallest positive singularities of $F(z)$ and $\xi(z)$, respectively. Notice that $0<\rho <1$, since it is easy to lower bound and upper bound $[z^n]F(z)$ by an exponential.

We first show that $\xi(z)$ is analytic in $|z|\leq \rho $. The function $f(z)$ has radius of convergence equal to 1, while for $|z|<1$ it holds that
\begin{align*}
\left|\exp \bigg( \sum _{k\geq 2}\frac{1}{k}F(z^k)\bigg)\right| &\leq \exp \bigg( \sum _{k\geq 2}\frac{1}{k}F(|z|^k)\bigg)\leq \exp \bigg( \sum _{k\geq 2}\frac{1}{k}\sum_{n\geq 1} f_n |z|^{nk}\bigg)\\
&=\exp \bigg( \sum _{k\geq 2}\frac{1}{k}\sum_{n\geq 1} f_n |z|^{2n+n(k-2)}\bigg)\\ &\leq \exp \bigg( \sum _{k\geq 2}\frac{1}{k}|z|^{k-2}\sum_{n\geq 1}f_nz^{2n}\bigg)\\ &<\exp \bigg(F(z^2)\sum _{k\geq 0} z^{k}\bigg).
\end{align*}
The radius of convergence of $F(z^2)$ is equal to $\sqrt{\rho _F}>\rho _F$, hence the claim is proved.

Since $\xi (x)$ is analytic at $\rho _F$, we can say $F(z)$ is a solution to the equation $y=G(z,y)$, where $G(z,y)=\exp (y)\xi (z).$ Notice that all requirements of Theorem~\ref{drmota} are satisfied, thus $F(z)$ satisfies the following expansion in a dented domain at $\rho $:
$$F(z)= F_0+F_1(1-z/\rho)^{\frac{1}{2}}+F_2(1-z/\rho)+F_3(1-z/\rho)^{\frac{3}{2}}+\mathcal{O}((1-z/\rho)^4).$$
The function $E(z)^{-1}F(z)$ has a singular expansion at $\rho$ of the same type; to obtain it, it is enough to multiply the regular expansion of $E(z)^{-1}$ at $\rho$ with the singular expansion of $F(z)$ at the same point.
To obtain the singular expansion of $\overline{L}(z)$, we apply the dissymmetry relation~\eqref{alertness} to the singular expansion of $E(z)^{-1}F(z)$. The result is an expansion at $\rho$ with singular exponent $3/2$:
$$\overline{L}(z)=\overline{L}_0+\overline{L}_2(1-z/\rho)+\overline{L}_3(1-z/\rho)^{\frac{3}{2}}+\mathcal{O}((1-z/\rho)^4).$$
 To see concretely that the coefficient of $(1-z/\rho)$ vanishes identically we compute the analytic expression of it, which is equal to
$$-F_1 E(\rho)^{-1}(F_0-1). $$
Then, $F_0$ is identically equal to $1$ since
$$0=1-G_y(\rho ,F(\rho))=1-\exp(F(\rho))\xi (\rho)=1-G(\rho,F(\rho))=1-F(\rho).$$

By Lemma~\ref{lstar}, $\hat{L}(z)$ also has a unique singularity at $\rho$. We can compute the first terms of its singular expansion by writing it in the form
\begin{equation*}
\exp ( \overline{L}(z)-\overline{L}_0 ) \exp \bigg( \overline{L}_0-1+\sum _{k\geq 2}\frac{1}{k}(\overline{L}(z^k)-1)\bigg),
\end{equation*}
and substituting $\overline{L}(z)$  by its singular expansion at $\rho $ and all the second factor by its regular expansion at $\rho $, $K_0+K_2(1-z/\rho)^2+\mathcal{O}((1-z/\rho)^4)$. The coefficient of $(1-z/\rho)^{\frac{3}{2}}$ is equal to
$$\bigg( \overline{L}_3+\overline{L}_0\overline{L}_3+\frac{1}{2}\overline{L}_0^2\overline{L}_3\bigg) K_0.$$
The stated asymptotic forms are obtained by the transfer theorem of singularity analysis that is summarised in Equation \eqref{everybody}.
\end{proof}

\begin{corollary}\label{doublelife}
 The coefficients of $L(z)$ have asymptotic growth of the form:
\[[z^n] L(z) \sim  \frac{c}{\Gamma (-3/2)} \cdot n^{-5/2}\cdot \sqrt{\rho}^{-n} ,\] where $ \rho\approx 0.44074$ ($\sqrt{\rho}^{-1}\approx 1.50628$) and $c\approx 594.24035$ or $c\approx 394.50617$, when $n$ is even or odd, respectively, and $\Gamma$ is the Gamma function.
\end{corollary}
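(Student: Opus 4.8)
The plan is to read the estimate off from the identity $L(z)=\hat L(z^2)/(1-z)-1$ of Lemma~\ref{finalfunc}, combined with the singular behaviour of $\hat L$ already established in the proof of Theorem~\ref{transform}, via a transfer-theorem argument with two dominant singularities. Recall from that proof that $\hat L$ has $\rho$ as its unique singularity of minimal modulus, is analytically continuable in a dented domain at $\rho$, and has there an expansion of singular exponent $3/2$, which I will write $\hat L(z)=\hat L_0+\hat L_2(1-z/\rho)+\hat L_3(1-z/\rho)^{3/2}+O\bigl((1-z/\rho)^2\bigr)$, with $\hat L_3=c_2$ by~\eqref{everybody}. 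The first step is to locate the dominant singularities of $L$: the substitution $w=z^2$ sends the single singularity $w=\rho$ of $\hat L$ to the two singularities $z=\pm\sqrt{\rho}$ of $\hat L(z^2)$, both of modulus $\sqrt{\rho}<1$, whereas $1/(1-z)$ is analytic on the closed disk of radius $\sqrt{\rho}$; since $\hat L$ is analytic on the closed disk of radius $\rho$ apart from $\rho$ itself, the function $L$ is analytic on $\lvert z\rvert\le\sqrt{\rho}$ except at $\pm\sqrt{\rho}$ (the additive constant $-1$ being irrelevant), so these two points are exactly its dominant singularities.

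Next I would compute the local expansion at each $\varepsilon\sqrt{\rho}$, $\varepsilon\in\{+1,-1\}$. Taking $Z=(1-z/(\varepsilon\sqrt{\rho}))^{1/2}$ as local variable, one has the exact identity $1-z^2/\rho=\bigl(1-z/(\varepsilon\sqrt{\rho})\bigr)\bigl(1+z/(\varepsilon\sqrt{\rho})\bigr)=Z^2(2-Z^2)$, whence $(1-z^2/\rho)^{3/2}=2^{3/2}Z^3(1-Z^2/2)^{3/2}=2^{3/2}Z^3+O(Z^5)$, while the even power $1-z^2/\rho$ contributes only analytic (even-in-$Z$) terms. Substituting $w=z^2$ into the expansion of $\hat L$ therefore gives $\hat L(z^2)=\hat L_0+2\hat L_2 Z^2+2^{3/2}\hat L_3 Z^3+O(Z^4)$; note that the $Z^1$ term vanishes. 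Since $1/(1-z)$ is analytic at $\varepsilon\sqrt{\rho}$ and, in the variable $Z$ (with $z=\varepsilon\sqrt{\rho}(1-Z^2)$), is an even power series with constant term $1/(1-\varepsilon\sqrt{\rho})$, multiplying does not disturb the leading odd term, so the coefficient of $Z^3$ in the singular expansion of $L$ at $\varepsilon\sqrt{\rho}$ equals $2^{3/2}\hat L_3/(1-\varepsilon\sqrt{\rho})$.

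Finally I would invoke the transfer theorem summing the contributions of the two minimal-modulus singularities (\cite[Theorem VI.5]{flajolet2009analytic}, cf.\ the remark after~\eqref{everybody}): the singularity $\varepsilon\sqrt{\rho}$ contributes $\dfrac{2^{3/2}\hat L_3}{(1-\varepsilon\sqrt{\rho})\,\Gamma(-3/2)}\,n^{-5/2}\,(\varepsilon\sqrt{\rho})^{-n}$, and adding $\varepsilon=\pm1$ and using $(-\sqrt{\rho})^{-n}=(-1)^n(\sqrt{\rho})^{-n}$ gives
\[
[z^n]L(z)\sim\frac{2^{3/2}\hat L_3}{\Gamma(-3/2)}\,n^{-5/2}\,(\sqrt{\rho})^{-n}\left(\frac{1}{1-\sqrt{\rho}}+\frac{(-1)^n}{1+\sqrt{\rho}}\right).
\]
The parenthesis equals $2/(1-\rho)$ for $n$ even and $2\sqrt{\rho}/(1-\rho)$ for $n$ odd, so the constant is $c=2^{5/2}\hat L_3/(1-\rho)$ in the even case and $c=2^{5/2}\sqrt{\rho}\,\hat L_3/(1-\rho)$ in the odd case (in particular the odd constant is $\sqrt{\rho}$ times the even one, matching the stated values); substituting the numerical data for $\rho$ and $\hat L_3=c_2$ from Theorem~\ref{transform} yields the claimed constants. (Equivalently, one can argue directly that $[z^n]L(z)=\sum_{j=0}^{\lfloor n/2\rfloor}[z^j]\hat L(z^2)$ is asymptotic to $[z^{\lfloor n/2\rfloor}]\hat L/(1-\rho)$ by a geometric-tail estimate, and then rescale $\lfloor n/2\rfloor$ to $n$.) There is essentially no obstacle here: the only points requiring care are that $\pm\sqrt{\rho}$ really are the sole singularities of minimal modulus and that neither the factor $(2-Z^2)^{3/2}$ nor $1/(1-z)$ contaminates the leading $Z^3$ coefficient, both of which are handled above, so the corollary is a routine consequence of Theorem~\ref{transform}.
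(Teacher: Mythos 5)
Your proposal is correct and is essentially the paper's own argument: the paper likewise takes the singular expansion of $\hat L$ at $\rho$ from Theorem~\ref{transform}, transports it to the two dominant singularities $\pm\sqrt{\rho}$ of $\hat L(z^2)$, applies the transfer theorem with added contributions, and arrives at the same constants $2^{5/2}\hat L_3/(1-\rho)$ for even $n$ and $\sqrt{\rho}$ times that for odd $n$. The only cosmetic difference is that the paper first uses $[z^{2n}]L(z)=[z^{2n+1}]L(z)$ to reduce to the even part $\hat L(z^2)/(1-z^2)-1$, whose two singular expansions coincide, whereas you keep the factor $1/(1-z)$ and let the contributions at $+\sqrt{\rho}$ and $-\sqrt{\rho}$ carry different local constants before summing.
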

\begin{proof}
Observe that for all $n\in \mathbb{N}\setminus \{0\}$, $[z^{2n}]L(z)=[z^{2n+1}]L(z)$ (recall that $L(z)=\hat{L}(z^2)\frac{1}{1-z}-1$). Thus it is enough to study the even part of $L(z)$, which is $\frac{\hat{L}(z^2)}{1-z^2}-1.$ To do so, we will first look into the behaviour of $\frac{\hat{L}(z)}{1-z}-1.$ This behaves the same as $\hat{L}(z)$ which, by the previous Lemma, satisfies a singular expansion of the form
\begin{equation*}
\hat{L}(z)=\hat{L}_0+\hat{L}_2(1-z/\rho)^2+\hat{L}_3(1-z/\rho)^{\frac{3}{2}}+\mathcal{O}((1-z/\rho)^4)
\end{equation*}
on a dented domain at $\rho$. Then $\frac{\hat{L}(z)}{1-z}-1$ has the same expansion, multiplied by an $\frac{1}{1-\rho}$ factor. Then $\frac{\hat{L}(z^2)}{1-z^2}-1$ satisfies the same type of expansion on $\pm\sqrt{\rho}$. In fact, this expansion can be computed by composing the singular expansion of $\hat{L}(z)\frac{1}{1-z}-1$ at $\rho $ with the regular expansion of $z^2$ at $\sqrt{\rho }$. Then the coefficient of $(1-z/\rho)^{\frac{3}{2}}$ is equal to $2^{\frac{3}{2}}\left(1-\rho\right)^{-1}\hat{L}_3$.

For even $n$, the transfer theorem of singularity analysis implies asymptotic growth of the form
$$ \frac{2^{5/2}\hat{L}_3}{\Gamma (-3/2)n^{5/2}\left(1-\rho\right)}\sqrt{\rho}^n.$$
The odd part of $L(z)$ has the same asymptotic growth, multiplied by $\sqrt{\rho}$.
\end{proof}

\begin{theorem}
\label{elsewhere}
The coefficients of $K(z)$ have asymptotic growth of the form:
\[[z^n]K(z)=c\cdot n^\alpha \cdot \exp (\beta n^{1/2}) (1+O(n^{-1/2})), \] where
$c = \frac{\pi^2}{4} 6^{-5/4}
 \approx 0.26275,\,\,
 \alpha = -7/4,\,\,
  \beta= \sqrt{2\frac{\pi^2}{3}}\approx 2.56509.$
\end{theorem}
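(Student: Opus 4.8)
The plan is to analyze the asymptotics of the coefficients of
\[
K(z)=\exp\!\Bigl(\sum_{k\ge 1}\frac{1}{k}\,\frac{2z^{3k}}{1-z^{2k}}\Bigr)
\]
by recognizing $K(z)$ as a multiset generating function and comparing it, term by term, with the generating functions of classical partition-type classes whose subexponential asymptotics are known (Meinardus-type / Ingham-type estimates). First I would rewrite the exponent in a cleaner ``source'' form: since $\overline K(z)=\frac{2z^{3}}{1-z^{2}}=\sum_{i\ge 1}2z^{2i+1}$, the class $\mathcal K=\mathrm{Mset}(\overline{\mathcal K})$ has exactly $2$ ``colours'' of part of each odd size $\ge 3$, i.e. $K(z)=\prod_{j\ge 1}(1-z^{2j+1})^{-2}$. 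The asymptotics of $\log$ of such a product are governed by the behaviour of the Dirichlet-type sum $\sum 2\,n^{-s}$ over $n\in\{3,5,7,\dots\}$ near $s=1$; the key quantities are the residue at $s=1$ (which controls $\beta$ through $\beta=2\sqrt{\text{(residue)}\cdot\zeta(2)}$-type formulas) and the value at $s=0$ (which controls the polynomial exponent $\alpha$ and the constant $c$).

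Concretely, the steps I would carry out are: (1) Express $\log K(z)$ and split it so that the dominant part is $\sum_{i\ge1}\frac{2z^{2i+1}}{1-z^{2i+1}}$-like; equivalently write $K(z)=P(z)^{2}/Q(z)$ where $P(z)=\prod_{n\ge1}(1-z^{n})^{-1}$ is the ordinary partition GF and $Q(z)$ collects the ``even parts'' and the small corrections ($n=1$), so that the correction factor is analytic and nonzero at $z=1$ with a computable value. (2) Recall Ingham's Tauberian theorem (or the saddle-point/Meinardus machinery as presented in e.g. Flajolet--Sedgewick VIII) giving, for a GF of the form $\exp(\sum_{k}\frac1k A(z^k))$ with $A(z)\sim \lambda/(1-z)$ as $z\to1^{-}$ on the real axis, an estimate $[z^n]\sim c\, n^{\alpha}\exp(\beta\sqrt n)$; here the ``number of parts'' density contributes the $\pi^2/3$ inside the square root because the relevant singular behaviour of the exponent is $\sum_{k\ge1}\frac1k\cdot\frac{\lambda}{1-z^k}\sim \lambda\frac{\pi^2}{6}\frac{1}{1-z}$, and the counting multiplicity $\lambda=2$ (two colours) and the restriction to odd parts (density $1/2$) together produce $\lambda\cdot\frac12=1$, so $\beta=\sqrt{2\cdot\frac{\pi^2}{3}\cdot 1}=\sqrt{2\pi^2/3}$. (3) Pin down $\alpha=-7/4$ and $c=\frac{\pi^2}{4}6^{-5/4}$ from the subleading terms: $\alpha$ comes from the number of ``degrees of freedom'' (two copies of an odd-part partition function, each contributing $-3/4$ after the standard $-1/4$ shift, plus the $+1/4$ from the density factor — more precisely from the constant term of the associated Dirichlet series), and $c$ from the saddle-point prefactor $\bigl(\text{stuff}\bigr)^{1/2}$ times $\exp(\text{Dirichlet value at }0)$. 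I would either cite a packaged statement (Meinardus' theorem as in Andrews, \emph{The Theory of Partitions}, Ch.~6, or the ``exp-log'' schema of Flajolet--Sedgewick) and verify the three Meinardus conditions for the Dirichlet series $D(s)=2\sum_{n\ge0}(2n+3)^{-s}=2\bigl((2^{-s}-1)\zeta(s)-1\bigr)$ wait — more carefully $D(s)=2\sum_{j\ge1}(2j+1)^{-s}=2\bigl((1-2^{-s})\zeta(s)-1\bigr)$, whose residue at $s=1$ is $2\cdot\frac12=1$ and whose value at $s=0$ is $2\bigl(-\tfrac12\zeta(0)-1\bigr)=2\bigl(\tfrac14-1\bigr)=-\tfrac32$, matching $\alpha=-\tfrac{D(0)+1}{2}-1=-\tfrac{-1/2+1}{2}... $ — and then read off $\alpha$ and $c$ from Meinardus' formula.

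The main obstacle I expect is the bookkeeping of the \emph{constant} $c$ and, to a lesser extent, the exact exponent $\alpha$: these are notoriously delicate in Meinardus-type results because they combine the value $D(0)$, the derivative $D'(0)$ (which feeds into $c$ via a factor $e^{D'(0)}$), the residue $A_1$ at $s=1$, and a $\sqrt{2\pi}$/$\Gamma$ prefactor, and one must be careful that the ``$n=1$'' part of the product (the double loop $\hat C_1$ contributes no odd part $\ge3$, so actually there is no $n=1$ issue for $K$ itself, but there is the subtlety that parts start at $3$ not at $1$) does not shift the exponent. So the real work is (a) checking Meinardus' three analytic conditions for $D(s)$, and (b) substituting into the closed-form asymptotic to confirm the stated $c=\frac{\pi^2}{4}6^{-5/4}$, $\alpha=-7/4$, $\beta=\sqrt{2\pi^2/3}$; the exponential rate $\beta$ and the fact that the estimate has the stated shape are comparatively routine once the Dirichlet series and its residue are identified.
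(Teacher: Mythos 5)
Your route is genuinely different from the paper's. The paper avoids Meinardus-type machinery altogether: it multiplies $K(z)$ by $(1-z)^{2}$ to get $G(z)=\prod_{n\ge1}(1-z^{2n-1})^{-2}=\prod_{n\ge1}(1+z^{n})^{2}$, the generating function of partitions into distinct parts with two colours (OEIS A022567), quotes the known estimate $[z^{n}]G(z)=\tfrac14 6^{-1/4}n^{-3/4}\exp\bigl(\sqrt{2\pi^{2}n/3}\bigr)(1+O(n^{-1/2}))$, and then recovers $[z^{n}]K(z)=p_{n}-2p_{n-1}+p_{n-2}$ as a second finite difference; the cancellation of the leading terms in that difference is what converts $n^{-3/4}$ into $n^{-7/4}$ and produces $\tfrac{\pi^{2}}{4}6^{-5/4}$. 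Your direct attack on $K(z)=\prod_{j\ge1}(1-z^{2j+1})^{-2}$ via Meinardus' theorem is viable and self-contained (the parts $\{3,5,7,\dots\}$ have gcd $1$, so the periodicity condition is unproblematic), and it buys you independence from the quoted convolution estimate; what the paper's trick buys is that it never needs $D(0)$, $D'(0)$, or the analytic continuation of the Dirichlet series at all.

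However, your partial execution contains an error that would derail the exponent. With $D(s)=2\sum_{j\ge1}(2j+1)^{-s}=2\bigl((1-2^{-s})\zeta(s)-1\bigr)$, the factor $1-2^{-s}$ vanishes at $s=0$, so $D(0)=2(0-1)=-2$, not $-3/2$ as you wrote. For a simple pole at $s=1$ with residue $A=1$, Meinardus gives the polynomial exponent $\kappa=\frac{D(0)-3/2}{2}$, and only $D(0)=-2$ yields $\kappa=-7/4$; your value would give $-3/2$, contradicting the statement (and your displayed formula for $\kappa$ in terms of $D(0)$ is also not the correct one). The constant likewise requires $D'(0)=2\ln 2\cdot\zeta(0)=-\ln 2$, i.e.\ $e^{D'(0)}=\tfrac12$, which combines with $(2\pi\cdot2)^{-1/2}\,(\zeta(2))^{(1-2D(0))/4}=(4\pi)^{-1/2}(\pi^{2}/6)^{5/4}$ to give exactly $\tfrac{\pi^{2}}{4}6^{-5/4}$ --- you correctly flag this bookkeeping as the real work, but it is left undone. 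Finally, the standard Meinardus error term is $O(n^{-\kappa_{1}})$ with $\kappa_{1}$ generally weaker than $1/2$ when the pole is at $s=1$, so matching the stated $O(n^{-1/2})$ would need either a sharper version of the theorem or an explicit second saddle-point term.
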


\begin{proof} (We thank Carlo Beenakker for pointing out this simplification on our proof.)
We start considering the generating function $G(z)=\frac{1}{(1-z)^2}K(z)$. Observe that $G(z)$ can be written as
$$\prod_{n\geq 1}\frac{1}{(1-z^{2n-1})^2}=\prod_{n\geq 1}\frac{(1-z^{2n})^2}{(1-z^{2n-1})^2 (1-z^{2n})^2}=\prod_{n\geq 1} (1-z^{n})^2 (1+z^n)^2\prod_{m\geq 1}\frac{1}{(1-z^m)^2}=\prod_{n\geq 1}(1+z^n)^2.$$
Hence, $G(z)$ is the generating function for partitions into distinct parts, with 2 types each part (see the sequence A022567 at the OEIS). The asymptotic of this generating function is equal to
$$[z^n] G(z)=  \frac{1}{4}6^{-1/4} n^{-3/4} \exp\left(\sqrt{\frac{2\pi^2}{3} n}\right) (1+O(n^{-1/2})),$$
which is obtained starting from the asymptotic for the number of partitions of $n$ into distinct parts and obtaining a fine estimate for its convolution (see \cite[p.8]{Kotesovec2015} for details).

Now we relate the asymptotic estimates for the coefficients of $G(z)$ with the asymptotic estimates of the coefficients of $K(z)$. Writing $G(z)=\sum_{n\geq 1} p_n z^n$, then from the relation $K(z)=(1-z)^2 G(z)$ we conclude that
$$[z^n] K(z)= p_n-2p_{n-1}+p_{n-2}.$$
We finally obtain the result by using the asymptotic for $p_n$, $p_{n-1}$ and $p_{n-2}$ by expanding them in terms of $n^{-1}$. After cancelation of the main terms, we obtain the claimed statement.\footnote{See the explicit computation in the \texttt{Maple} accompanying session at the website of the first author.}
\end{proof}

\section{Enumeration of link-diagrams}

In this section, we enumerate different kinds of connected link-diagrams (from now on, we refer to them plainly as link-diagrams) that are \emph{rooted}, i.e., an edge is distinguished and ordered. We start with the class of $K_4$-free rooted link-diagrams (Subsection \ref{extirpate}), in which we show the main decomposition technique used in the forthcoming subsections. Later, we deal with the subclass of minimal link-diagrams (Subsection \ref{according}) and link-diagrams arising from the unknot (Subsection \ref{unknot}). The intrinsic difficulty in the enumeration of such subclasses lies in in their overcrossing-undercrossing structure that now has to be taken into account. In Section 6, we develop an argument to obtain asymptotic estimates for the unrooted diagrams.

In the next two sections we will often say that some rooted map $R_1$ is \emph{pasted on an edge} $e$ of another rooted map $R_1$. We will explain now what is meant by that, so that no confusion occurs. First consider that $e$ has a well-defined orientation (let us say given by the alphabetical order of $e$ among the other edges) which determines the orientation by which $M_1$ will be pasted. Then $e$ is subdivided into three parts. The root-edge of $M_1$ is identified with the middle part and the rest of $M_1$ is pasted to the right of the middle part, with respect to the orientation of $e$. Then the middle part is erased. When the new map is seen as a link diagram, we assume that the crossing pattern of the map that was pasted is preserved (as if a connected sum operation was performed).

\subsection{Enumeration of $K_4$-minor-free link-diagrams}\label{extirpate}

We denote by $\mathcal{M} $ the class of $K_4$-minor-free link-diagrams, with size being the number of edges.
Enumerating $\mathcal{M} $ is equivalent to enumerate $K_4$-minor-free 4-regular maps, while taking into account the crossing pattern $\sigma$.
We first give a combinatorial decomposition for the rooted version of $\mathcal{M} $, denoted by $\vv{\mathcal{M}}$, where the root-edge has size zero (recall the definition of rooted maps in Section 2) and the crossing pattern is not taken into account. It is alright to forget temporarily the crossing pattern, since in an object of $\vv{\mathcal{M}}_n$ any crossing pattern of the $2^{n/2}$ possible ones gives a different rooted link-diagram. We will denote by $\vv{\mathcal{M}}^+$ the class $\vv{\mathcal{M}}$ where the root-edge and the crossing pattern is taken into account.

The decomposition is done by adapting the construction of 4-regular graphs in~\cite{noy2019enumeration}. Let us mention that the main simplification compared to \cite{noy2019enumeration} is that in our situation we do not obtain 3-connected components. For completeness, and because this decomposition is critical to understand the following subsections, we write it in full and recall all the needed definitions and arguments.

For a map $R\in\vv{\mathcal{M}}$, where $st$ is the root-edge with initial and final vertex $s$ and $t$, respectively, we write $R^{-}$ for the map $R- st$ 
 (this is called a \emph{network} in map enumeration). Consider the following subclasses of $\mathcal{M}$:
 \begin{enumerate}
\item $\mathcal{U}$ corresponds to maps $R\in\vv{\mathcal{M} }$, where $s=t$ (\emph{loop composition}).
\item $\mathcal{S}$ corresponds to maps $R\in\vv{\mathcal{M} } $, where $R^-$ is connected and has a bridge (\emph{series composition}).
\item $\mathcal{P}$ corresponds to maps $R\in\vv{\mathcal{M} } $, where $R^-$ is 2-edge-connected and either $R^{-}-\{ s,t\}$ is disconnected or $s,t$ are connected with at least three edges in $R$ (\emph{parallel composition}).
\item $\mathcal{F}$ corresponds to maps $R\in\vv{\mathcal{M} }$, where $R^-$ is 2-edge-connected, $R^{-}- \{s,t\}$ is connected, and $s,t$ are connected with exactly 2 edges in $R$.
\end{enumerate}

See Figure~\ref{4-regMaps} for an illustration of these classes, even though the picture will be better understood after Proposition~\ref{principal}.

\begin{figure}[h!]\centering
  \includegraphics[scale=2.2]{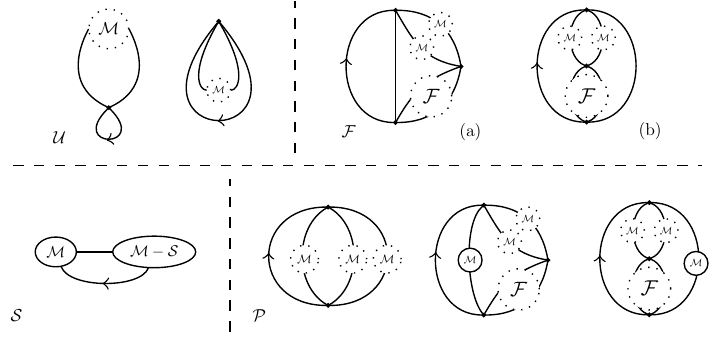}
  \caption{The decomposition of rooted 4-regular maps. In dotted circles, the presence of an object of the inscribed type is optional. In plain circles, it is mandatory.}\label{4-regMaps}
\end{figure}

We denote by $\vv{M}(z)$, or simply $\vv{M}$, the generating function of rooted $K_4$-minor-free link-diagrams, where $z$ marks vertices. Similarly, we denote by $U$, $S$, $P$, and $F$ the corresponding generating functions of the classes $\mathcal{U},\mathcal{S},\mathcal{P},\mathcal{F}$. The following proposition relates all these generating functions in a system of equations:

\begin{proposition}\label{principal}
The generating function of rooted $K_4$-minor-free link-diagrams, $\vv{M}(z)$, satisfies the following system of equations:
\begin{equation*}
\begin{aligned}
\vv{M}  &= U+S+P+F \\
U &= 2z^2\vv{M} +2z\\
S &= z(\vv{M} -S)\vv{M} \\
P &= z^3(1+z\vv{M})^3+zF\vv{M}\\
F &= (z+z^2\vv{M})^2\big(F+2z(z+z^2\vv{M})^2\big)
\end{aligned}
\end{equation*}
\end{proposition}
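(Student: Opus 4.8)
The plan is to set up a canonical decomposition of a rooted $K_4$-minor-free link-diagram $R$ according to the structure of the network $R^-=R-st$, exactly mirroring the classical series-parallel decomposition of networks but adapted to the $4$-regular setting and keeping track of the crossing pattern. First I would verify that the four classes $\mathcal{U},\mathcal{S},\mathcal{P},\mathcal{F}$ partition $\vv{\mathcal M}$. If $s=t$ we are in $\mathcal{U}$. Otherwise $R^-$ is either disconnected, has a bridge, or is $2$-edge-connected; since $R$ is connected and $st$ is a single edge, $R^-$ disconnected would make $st$ a bridge of $R$, which cannot happen because every vertex of $R$ has degree $4$ (even), so an edge-cut of size one is impossible — hence $R^-$ is connected. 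If it has a bridge we are in $\mathcal{S}$; if it is $2$-edge-connected we split according to whether $R^--\{s,t\}$ is disconnected or $s,t$ are joined by $\geq 3$ edges ($\mathcal{P}$) versus $R^--\{s,t\}$ connected with exactly two $s$--$t$ edges ($\mathcal{F}$). Here one uses $K_4$-minor-freeness: a $2$-connected $K_4$-minor-free map is series-parallel, so there is no "rigid/$3$-connected" case, which is precisely the simplification over~\cite{noy2019enumeration}.

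Next I would translate each class into a generating-function equation, remembering that $z$ marks vertices, the root edge $st$ contributes nothing, and — crucially — the crossing pattern $\sigma$ is \emph{not} recorded in $\vv M$ (it will be reinstated later via the factor $2^{n/2}$). For $\mathcal{U}$: a loop at $s=t$ either has the other two half-edges of $s$ forming a second loop (giving the term $2z$, the $2$ for the plane embedding choice of nesting, or rather for the two cyclic arrangements) or continuing into an arbitrary rooted map, contributing $2z^2\vv M$; hence $U=2z^2\vv M+2z$. For $\mathcal{S}$ (series): $R^-$ has a bridge, so $R$ is obtained by putting two networks in series; writing $\vv M-S$ for a network that is \emph{not} itself a series network to avoid overcounting the decomposition point, and inserting a fresh vertex of degree $4$ where the two are glued, one gets $S=z(\vv M-S)\vv M$. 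For $\mathcal{P}$ (parallel): either $s,t$ are joined by three "parallel strands," each strand being an edge possibly refined by a rooted map, which the factor $z^3(1+z\vv M)^3$ encodes (the outer $z^3$ being the vertices where strands meet, $(1+z\vv M)$ per strand), or a non-parallel $2$-edge-connected piece (an $\mathcal F$-type network) is placed in parallel with one more strand, giving $zF\vv M$; hence $P=z^3(1+z\vv M)^3+zF\vv M$. For $\mathcal{F}$: $s,t$ are joined by exactly two edges and $R^--\{s,t\}$ is connected; each of the two $s$--$t$ edges is refined by a factor $(z+z^2\vv M)$, and the inner connected part is recursively either an $\mathcal F$-network $F$ or two further refined parallel edges $2z(z+z^2\vv M)^2$, yielding $F=(z+z^2\vv M)^2\bigl(F+2z(z+z^2\vv M)^2\bigr)$. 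Finally $\vv M=U+S+P+F$ by the partition.

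The main obstacle is bookkeeping: getting the exact polynomial weights right so that the decomposition is a \emph{bijection} and nothing is double-counted. The subtle points are (i) the exclusion $\vv M-S$ in the series case, which is the standard "a series network does not decompose uniquely unless we forbid the top block from being series" trick and must be argued carefully in the $4$-regular map context; (ii) the precise constant factors $2z$, $2z(z+z^2\vv M)^2$, etc., which come from the finitely many ways to embed loops / two parallel edges in the plane around a degree-$4$ vertex while respecting the rooting, and these need a small case check; and (iii) making sure the $\mathcal P$/$\mathcal F$ split is clean, i.e. that "three parallel edges" and "two parallel edges plus a connected remainder" are genuinely disjoint and exhaustive among $2$-edge-connected networks with disconnected $R^--\{s,t\}$ or $\geq 2$ edges between $s,t$. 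Once the bijective decomposition is established, reading off the system is mechanical, and I would conclude by noting that all series are formal power series in $z$ with nonnegative coefficients, so the system has a unique such solution, which is therefore $\vv M(z)$.
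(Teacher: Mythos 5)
Your decomposition is the same one the paper uses (adapted from the $4$-regular construction in~\cite{noy2019enumeration}), and the equations you write down are the right ones. But your case analysis for why $\mathcal U,\mathcal S,\mathcal P,\mathcal F$ exhaust $\vv{\mathcal M}$ has a hole. After disposing of ``$R^-$ disconnected'' and ``$R^-$ has a bridge'', you split the $2$-edge-connected case into ``$R^--\{s,t\}$ disconnected or $s,t$ joined by $\geq 3$ edges'' versus ``$R^--\{s,t\}$ connected and exactly two $s$--$t$ edges''. This leaves out the configuration where $R^-$ is $2$-edge-connected, $R^--\{s,t\}$ is connected, and $s,t$ are joined by exactly one edge of $R$ (the root edge only). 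The paper must --- and does --- prove that this configuration cannot occur, and that argument is the one place where $K_4$-minor-freeness genuinely enters: one first rules out loops at $s,t$ and any cut vertex of $R^-$ separating $s$ from $t$ (via a parity count on edge cuts of a $4$-regular graph), concludes that $R^-$ is $2$-connected, takes two internally disjoint $s$--$t$ paths, and joins them by a path inside the connected graph $R^--\{s,t\}$ to exhibit a $K_4$ subdivision. Your appeal to ``$2$-connected $K_4$-minor-free maps are series-parallel, so there is no rigid case'' gestures at the right phenomenon but does not establish that this residual case is empty, so as written the partition --- and hence the first equation $\vv M=U+S+P+F$ --- is not justified.

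A secondary issue: several of your local justifications assign the $z$-factors to the wrong atoms, which would matter if you had to derive rather than verify the system. The variable tracks (non-root) edges here, so the $z$ in $S=z(\vv M-S)\vv M$ is the bridge edge, not a ``fresh vertex of degree $4$'' (a series gluing creates no new vertex); the $z^3$ in $P$ counts the three parallel $s$--$t$ edges, not ``vertices where strands meet'' (there are only two such vertices, $s$ and $t$); the term $zF\vv M$ arises by pasting an object of $\vv{\mathcal M}$ onto the unique single edge of an $\mathcal F$-object (the extra $z$ being the edge created by the pasting), not by putting an $\mathcal F$-network ``in parallel with one more strand'', which would violate $4$-regularity at $s$ and $t$; and in the $\mathcal F$-recursion the factor $(z+z^2\vv M)^2$ is the first double edge of the internal chain, not the two direct $s$--$t$ edges (one of which is the weight-zero root). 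None of these change the final system, but the bookkeeping you yourself flag as ``the main obstacle'' is exactly where these slips would bite.
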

\begin{proof}(See also~\cite[Lemma 5.1]{noy2019enumeration})
The classes $\mathcal{U},\mathcal{S},\mathcal{P},\mathcal{F}$ are, by definition, disjoint.
Notice that it is also not possible that $R^-$ is disconnected, since this would imply that $st$ is a bridge and would contradict the 4-regularity. Moreover, it is not possible that $R^-$ is 2-edge-connected, $R^{-}- \{s,t\}$ is connected, and $s,t$ are connected with exactly one edge in $R$, since this would force the existence of a $K_4$ minor: First, notice that $s,t$ can have no loop edges, because we assumed 2-edge connectedness in $R^-$. This implies that there are three vertices $s_1,s_2,s_3$ that are neighbours of $s$ and $t_1,t_2,t_3$ that are neighbours of $t$. Also $R^-$ can't have a cut-vertex $v_c$ that leaves $s,t$ in different connected components, since if it did then we could split the connected components in $R^--\{ v_c\}$ in two parts, the ones connected to $s$ and the ones connected to $t$ (the rest can go in either group). But then $v_c$ has two edges in both groups, by 2-edge-connectedness. This implies that in both parts the sum of degrees is odd, a contradiction. Then $ R^-$  is 2-connected and we can find two internally disjoint paths between $s,t$ in $R^-$. The two paths are connected by another path in $R^--\{s,t\}$, since this is assumed connected, and the claim is shown. We conclude that $\vv{\mathcal{M}} $ is partitioned as $\vv{\mathcal{M}}  = \mathcal{U}\cup\mathcal{S}\cup\mathcal{P}\cup\mathcal{F}$.

For $\mathcal{U}$, there are two different maps of size one. Any other map $R\in\mathcal{U}$ can be decomposed uniquely into a map of size one and another map $J$ that is pasted on its non-root edge in the canonical way with respect to the root edge. The latter means that the non-root edge $st$ is subdivided into $svv't$, $vv'$ is removed, and the endpoints of $J$'s  root-edge are identified with $v,v'$, respecting the orientation induced by $R$'s root-edge.

Let  $ R\in \mathcal{S}$. Out of all the bridges in the map $R^-$, we pick the first one with respect to the point of the root-edge. After deleting it, there is a unique submap attached to the point of the root-edge and we draw a new root-edge to it which points where the original root-edge pointed and begins at the vertex the bridge was. We call this map $R_1$, and $R_1^-$ cannot have a bridge by definition. From what is left in $R^-\setminus R_1$, we define similarly $R_2$. Then $R_1\not\in \mathcal{S}$ and $R_2\in\vv{\mathcal{M}}$. The bridge between them is counted by $z$.

If $ R\in \mathcal{F}$, it can be decomposed uniquely to a single edge and a series of double edges, on each of which there may be pasted other maps from $\vv{\mathcal{M}}$, in a canonical way. The factor $(z+z^2\vv{M})^2$ corresponds to the first pair of edges and the factor $F+2z(z+z^2\vv{M})^2$ to the rest of the double-edges. In the latter, the factor $z$ corresponds to the single edge and the factor 2 counts its two possible positions with respect to the root-edge.

For $\mathcal{P}$ there are two cases: either each of the connected components in $R^--\{s,t\}$ is connected with one edge to each of the $s,t$, or there is a component connected with two edges to each of the $s,t$. In the second case we have an object in $\mathcal{F}$, where now an object from $\vv{\mathcal{M}}$ is pasted on the single edge.\end{proof}

  We can now analyze this system of equations by means of asymptotic techniques.

\begin{theorem}\label{collector}
The number of rooted $K_4$-free link-diagrams on $\vv{\mathcal{M}}^+$ with $n$ vertices is asymptotically equal to:
\[[z^n]\vv{M}^+(z)\sim  \frac{c}{\Gamma (-1/2)}\cdot n^{-3/2} \cdot 2^{n/2} \cdot \rho^{-n},\] where $n$ is even and $\rho , c,$ are constants; in particular, $ \rho \approx 0.31184$ ($\frac{2}{\rho }\approx 6.41337$) and $c\approx -3.04531$.
\end{theorem}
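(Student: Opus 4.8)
The plan is to apply singularity analysis to the system of equations from Proposition~\ref{principal}, following the strategy already used in the proof of Theorem~\ref{transform}. First I would eliminate variables: from the system, $U$, $S$, $P$, $F$ can all be expressed rationally (or, in the case of $F$, by solving a quadratic) in terms of $z$ and $\vv{M}$, so substituting back into $\vv{M}=U+S+P+F$ yields a single functional equation $\vv{M}=\Phi(z,\vv{M})$ for some explicit algebraic function $\Phi$. One must be slightly careful with the equation for $F$, which is quadratic in $F$; I would pick the branch that is a power series with nonnegative coefficients and vanishes appropriately at $z=0$, so that $F$ is an analytic function of $z$ and $\vv{M}$ near the origin. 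After this reduction, $\Phi(z,y)$ will satisfy the hypotheses of Theorem~\ref{drmota}: $\Phi(0,y)\equiv 0$, $\Phi(z,0)\not\equiv 0$, all coefficients nonnegative, and (this needs a brief check) periodicity only in the trivial sense that $\vv{M}$ is supported on even powers of $z$ — which is why the statement has the factor $2^{n/2}$ pulled out and restricts to even $n$.

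Next I would invoke Theorem~\ref{drmota} to conclude that $\vv{M}(z)$ has a square-root singular expansion
\[
\vv{M}(z) = M_0 + M_1\Bigl(1-\tfrac{z}{\rho}\Bigr)^{1/2} + M_2\Bigl(1-\tfrac{z}{\rho}\Bigr) + O\Bigl(\bigl(1-\tfrac{z}{\rho}\bigr)^{3/2}\Bigr)
\]
in a dented domain at its dominant singularity $\rho$, where $\rho$ and $s=\vv{M}(\rho)$ are the solutions of the system $y=\Phi(z,y)$, $1=\partial_y\Phi(z,y)$. These can be solved numerically in \texttt{Maple} to get $\rho\approx 0.31184$ and the value of $M_1$ via the formula $h_0=\sqrt{2\rho\,\Phi_z(\rho,s)/\Phi_{yy}(\rho,s)}$ quoted after Theorem~\ref{drmota}; one should verify $\Phi_z(\rho,s)\neq 0$ and $\Phi_{yy}(\rho,s)\neq 0$ so that the singular exponent is genuinely $1/2$. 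I would also check that $\rho$ is the unique singularity on its circle of convergence (aperiodicity after accounting for the even-power support), so that the single-singularity transfer theorem applies rather than a finite sum of contributions.

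Then I would pass from $\vv{M}$ to $\vv{M}^+$. Since in any object of $\vv{\mathcal{M}}_n$ each of the $2^{n/2}$ crossing patterns yields a distinct rooted link-diagram, and the root-edge carries size zero, we have $[z^n]\vv{M}^+(z) = 2^{n/2}[z^n]\vv{M}(z)$; equivalently $\vv{M}^+(z) = \vv{M}(z/\sqrt 2)$ with $z$ now marking vertices weighted by $\sqrt 2$ each. Applying the Transfer Theorem (Equation~\eqref{everybody}) with singular exponent $\alpha=1/2$ to $\vv{M}(z)$ gives $[z^n]\vv{M}(z)\sim (M_1/\Gamma(-1/2))\,n^{-3/2}\rho^{-n}$, and multiplying by $2^{n/2}$ yields exactly the claimed form with $c=M_1$. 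Finally I would record the numerical values $\rho\approx 0.31184$, $2/\rho\approx 6.41337$, $c\approx -3.04531$ (the sign of $c$ being negative is the usual feature of square-root expansions, where $M_1<0$).

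The main obstacle I anticipate is the variable elimination and branch selection for $F$: the equation $F=(z+z^2\vv{M})^2(F+2z(z+z^2\vv{M})^2)$ is linear in $F$ actually (rewriting as $F(1-(z+z^2\vv M)^2) = 2z(z+z^2\vv M)^4$), so $F$ is rational in $z$ and $\vv{M}$ — good — but then $P$ and hence $\Phi$ involve $F$, and after substitution $\Phi(z,y)$ is a rational function whose denominator $1-(z+z^2y)^2$ could vanish inside the relevant domain; I would need to confirm that at the dominant singularity $(\rho,s)$ this denominator is nonzero and that the region of convergence of $\Phi$ is "large enough" in the sense of Theorem~\ref{drmota}, i.e., that the square-root singularity of $\vv{M}$ comes from the branch point $1=\Phi_y$ and not from a pole of $\Phi$. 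Verifying this, together with the aperiodicity of $\vv{M}(z^{1/2})$-type series, is the delicate analytic point; everything else is a mechanical computation best delegated to the accompanying \texttt{Maple} session.
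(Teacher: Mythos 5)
Your proposal is correct and follows essentially the same route as the paper: algebraic elimination of the system from Proposition~\ref{principal}, application of Theorem~\ref{drmota} to get a square-root expansion at $\rho$, transfer, and a final factor $2^{n/2}$ for the crossing patterns. The only cosmetic difference is in handling the even-power periodicity: the paper keeps both singularities $\pm\rho$ and adds their contributions (yielding the factor $((-1)^n+1)$, i.e.\ restriction to even $n$), whereas you propose to de-periodize first -- both are standard and equivalent, but note that $\rho$ is \emph{not} the unique dominant singularity of $\vv{M}(z)$ itself, so the single-singularity transfer must be applied only after the substitution you allude to.
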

\begin{proof}
By Proposition~\ref{principal}, $\vv{M}(z)$ satisfies a polynomial system of equations.
By algebraic elimination we obtain the following polynomial $P_{\vv{M}}(z,y)$, which satisfies that $p_{\vv{M}}(z,\vv M(z))=0$:
\begin{align*}p_{\vv{M}}(z,y) =\; & {y}^{6}{z}^{11}+6\,{y}^{5}{z}^{10}+15\,{y}^{4}{z}^{9}-{y}^{4}{z}^{7}+
20\,{y}^{3}{z}^{8}-4\,{y}^{3}{z}^{6}+15\,{y}^{2}{z}^{7}+{y}^{3}{z}^{4}
-\\
&  -6\,{y}^{2}{z}^{5}+6\,y{z}^{6}+4\,{y}^{2}{z}^{3}-4\,y{z}^{4}+{z}^{5}+5
\,y{z}^{2}-{z}^{3}-y+2\,z .
\end{align*}
We can obtain a similar polynomial $p_{z\vv{M}^+}(z,z \vv M(z))=0$ for $z\vv{M}$.
Setting $F(z,y)=P_{z\vv{M}}(z,y)+y$ the conditions of Theorem~\ref{drmota} hold (even though $F(z,y)$ does not have positive coefficients, it is enough that the original system of equations has; in fact, in~\cite{drmota1997systems} the same statement is shown for such systems with positive coefficients). Hence
$$z\vv{M}(z)=M_0+M_1(1-z/\rho)^{\frac{1}{2}}+M_2(1-z/\rho)+\mathcal{O}((1-z/\rho)^2)$$
in a dented disk around some computable positive number $\rho$. Since $zM(z)$ is periodic, it has a similar singular expansion at $-\rho $ with identical coefficients. By the transfer theorem of singularity analysis,
$$ [z^n]z\vv{M}(z)\sim \frac{M_1}{\Gamma (-1/2)}\rho^{-n}((-1)^n+1).$$
Finally, a factor $2^{n/2}$ accounts for all the possible undercrossings and overcrossings.
\end{proof}

The first terms of the series $\vv{M}^+$ are the following:
\[\vv{M}^+=4\,{z}^{2}+36\,{z}^{4}+432\,{z}^{6}+5984\,{z}^{8}+90112\,{z}^{10}
+1432576\,{z}^{12}+23656960\,{z}^{14}+...
\]

\subsection{Minimal diagrams}
\label{according}

Recall that a link diagram is minimal if, for the link it represents, the number of its edges is the minimum possible.
Let $\mathcal{M}_1$ be the class of all minimal link-diagrams in $\mathcal{M}$, counting by the number of edges.
Let $\vv{\mathcal{M}_1}$ (resp. $\vv{\mathcal{M}^+_1}$) be the rooted version of $\mathcal{M}_1$ where the root-edge is not taken into account (resp. is taken into account). We denote by $\vv{M_1}(z)$ the corresponding generating function (resp. $\vv{M_1^+}(z)$). Here the crossing pattern will be encoded directly in the combinatorial decomposition $\vv{\mathcal{M}_1}$.

In order to assure the minimality condition, one must encode the crossing pattern of each map that is being pasted in the construction of Proposition~\ref{principal}. To this end, we first define the subclasses $\mathcal{M}_1 ,\mathcal{S}_1 , \mathcal{P} _1, \mathcal{F}_1 $ of the classes $\mathcal{M} ,\mathcal{S} , \mathcal{P} , \mathcal{F} $, such that each contains all minimal diagrams of its respective superclass. We then partition each of these classes into four smaller classes: $\mathcal{M} _{i}^j,\mathcal{S} _{i}^j, \mathcal{P} _{i}^j, \mathcal{F} _{i}^j$, where $i,j\in\{-,+\}$. The subscript indicates whether the tail of the root-edge is overcrossing or not and, accordingly, the superscript indicates whether the head of the root-edge is overcrossing or not. See Figure~\ref{root_types} for all possible root-edge types, depending on the overcrossing pattern. We denote by $M_{i}^j,S_{i}^j,P_{i}^j,F_{i}^j$, where $i,j\in\{-,+\}$, the corresponding generating functions (as in the previous Section, all rooted classes are counted by the number of edges, excluding the root-edge). Note that the class $\mathcal{U}$ is not taken into account in this section, since diagrams with a loop can be reduced to diagrams with less crossings by flipping the loop.

\begin{figure}[h!]\centering
  \includegraphics[scale=1.5]{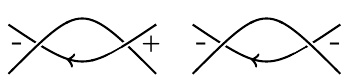}
    \includegraphics[scale=1.5]{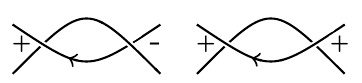}
  \caption{The possible root-edge types.}\label{root_types}
\end{figure}

The decomposition that we will follow here relies strongly on two facts. First that the crossing number of connected sums of torus links is the sum of the crossing numbers of its factors (recall the properties of torus links in the Preliminaries). Second that the operation of pasting a map on an edge, that is essential to the decomposition we saw in the previous section, is equivalent to performing a connected sum operation in the level of links. In our situation, this means that when we paste a minimal link-diagram in an already minimal diagram, then the resulting diagram is again minimal.

\begin{proposition}\label{minimal}
The generating function of minimal, rooted, $K_4$-minor-free link-diagrams $\vv{M_1}(z):=\vv{M_1}$ (where $z$ encodes vertices) satisfies the following polynomial system of equations:

\hspace{-1.2cm}\begin{minipage}{.5\textwidth}
\begin{align*}
\vv{M_1}  &=  M_{+}^{+}+M_{+}^{-}+M_{-}^{+}+M_{-}^{-}\\
M_{-}^{+} &=  S_{-}^{+}+P_{-}^{+}+F_{-}^{+}\\
M_{+}^{-} &= S_{+}^{-}+P_{+}^{-}+F_{+}^{-}\\
M_{+}^{+} &= S_{+}^{+}\\
M_{-}^{-} &=  S_{-}^{-}\\
P_{-}^{+} &=   z^{3}(1+z\vv{M_1})^{3}+zF_-^+\vv{M_1}\\
P_{+}^{-} &=   z^{3}(1+z\vv{M_1})^{3}+zF_+^-\vv{M_1}\\
\end{align*}
\vspace*{-1.3cm}
\end{minipage}
\begin{minipage}{.5\textwidth}
\begin{align*}
S_{+}^{+} &=  z(M_{-}^{+}+M_{+}^{+})(M_{+}^{+}+M_{+}^{-}-S_{+}^{+}-S_{+}^{-})\\[1.5pt]
S_{-}^{-} &=  z(M_{-}^{-}+M_{+}^{-})(M_{-}^{-}+M_{-}^{+}-S_{-}^{-}-S_{-}^{+})\\[1.5pt]
S_{-}^{+} &=  z(M_{+}^{+}+M_{-}^{+})(M_{-}^{-}+M_{-}^{+}-S_{-}^{-}-S_{-}^{+})\\[1.5pt]
S_{+}^{-} &=  z(M_{-}^{-}+M_{+}^{-})(M_{+}^{+}+M_{+}^{-}-S_{+}^{+}-S_{+}^{-})\\[1.5pt]
F_{-}^{+} &=   (z+z^2\vv{M_1})^2(F_{-}^{+}+2z(z+z^2\vv{M_1})^2) \\[1.5pt]
F_{+}^{-} &=   (z+z^2\vv{M_1})^2(F_{+}^{-}+2z(z+z^2\vv{M_1})^2)
\end{align*}
\end{minipage}
\end{proposition}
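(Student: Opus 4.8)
The plan is to mirror the proof of Proposition~\ref{principal}, but now tracking the overcrossing/undercrossing pattern of the root-edge through each of the four composition types. First I would observe that, exactly as in the non-minimal case, a reduced connected $K_4$-minor-free link-diagram decomposes, via Lemma~\ref{inscribed}, as an iterated 2-edge-sum of graphs $\hat{C}_j$, and that the 2-edge-sum operation on diagrams corresponds to a connected sum of torus links. Since the crossing number of a connected sum of torus links is additive (the key fact recalled in the Preliminaries and used again at the start of this subsection), pasting a \emph{minimal} diagram onto an edge of a \emph{minimal} diagram yields a \emph{minimal} diagram; conversely every minimal diagram arises this way. Loops are excluded because flipping a loop via a Reidemeister~I move strictly decreases the crossing number, which justifies dropping the class $\mathcal{U}$. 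This gives the partition $\vv{\mathcal{M}_1}=\mathcal{S}_1\cup\mathcal{P}_1\cup\mathcal{F}_1$ (no $\mathcal{U}$), refined further by the four root-edge types into the classes $\mathcal{M}_i^j,\mathcal{S}_i^j,\mathcal{P}_i^j,\mathcal{F}_i^j$, $i,j\in\{-,+\}$.

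Next I would derive each equation by bookkeeping the crossing pattern in the decomposition of Proposition~\ref{principal}. For the series class: if $R^-$ has a bridge, the first bridge (with respect to the root point) splits $R$ into $R_1\notin\mathcal{S}_1$ and $R_2\in\vv{\mathcal{M}_1}$, joined by an edge counted by $z$; the tail of $R$'s root-edge inherits the crossing status of $R_1$'s root tail, and the head inherits that of $R_2$'s root head. Crucially, the bridge vertex $s$ must be a crossing of $R_1$ where the $R_1$-side of the root-edge and the bridge are an overcrossing/undercrossing pair; this is what forces the two "overcrossing" slots at $s$ to be shared between the root-edge of $R_1$ and $R_2$, and produces the constraint that, e.g., $M_+^+ = S_+^+$ (a root-edge whose tail \emph{and} head overcross cannot be produced by a parallel or $\mathcal{F}$-type composition without a loop, only by a series composition). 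Writing $M_-^+=S_-^++P_-^++F_-^+$ etc. and then reading off from the figure which of the four refined pieces of the "inner" factors feed into $S_i^j$ gives the four $S_i^j$ equations; the factor $(M_-^+ + M_+^+)$ versus $(M_+^+ + M_+^- - S_+^+ - S_+^-)$ encodes exactly "the tail-overcrossing status of the first block" times "the remaining chain with its head-overcrossing status, minus the series ones already counted". For $\mathcal{F}$ and $\mathcal{P}$, pasting an arbitrary diagram from $\vv{\mathcal{M}_1}$ (no refinement needed on the pasted-in maps because a double edge has no crossing of its own) onto the double edges or the single edge gives $F_-^+$ and $P_-^+$ (and their $+/-$ mirrors) with the same shape as in Proposition~\ref{principal}, now with $\vv M$ replaced by $\vv{M_1}$ and the root-edge status dictated by which of the two parallel strands carries the root.

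I would then check consistency: the equations must be symmetric under the simultaneous swap of all subscripts and superscripts (reflecting the mirror symmetry between over- and under-crossings), which is visibly the case in the stated system, and summing the four $M_i^j$ equations should collapse — after using $M_+^+=S_+^+$, $M_-^-=S_-^-$ — to the non-minimal system of Proposition~\ref{principal} with the $\mathcal{U}$-term removed, i.e. $\vv{M_1}=S+P+F$ with $S=z(\vv{M_1}-S)\vv{M_1}$, $P=z^3(1+z\vv{M_1})^3+zF\vv{M_1}$, $F=(z+z^2\vv{M_1})^2(F+2z(z+z^2\vv{M_1})^2)$. This sanity check confirms the combinatorial split is exhaustive and disjoint.

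The main obstacle is not any hard estimate but the careful case analysis at a series-composition vertex: one must verify that at the bridge vertex the overcrossing pair really is forced to be "one strand from $R_1$'s root-edge side, one strand being the bridge", so that the crossing statuses of the two halves combine in precisely the advertised way, and in particular that the composition types $\mathcal{P}_1,\mathcal{F}_1$ genuinely cannot produce a root-edge with both endpoints overcrossing (hence $M_+^+=S_+^+$, $M_-^-=S_-^-$), and that no minimal diagram is double-counted across the refined classes. Once this local bookkeeping is pinned down, the equations follow by the same \emph{pasting} arguments as in Proposition~\ref{principal}, and the system is a valid defining system (all coefficients nonnegative, $F(0,y)$-type conditions met) to which the asymptotic machinery can later be applied.
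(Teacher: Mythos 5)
Your overall strategy is the same as the paper's: refine the decomposition of Proposition~\ref{principal} by the crossing type of the root-edge, drop $\mathcal{U}$ because a loop can be flipped, and use additivity of the crossing number under connected sum to argue that pasting minimal pieces preserves minimality. However, two steps that carry the real content of the proof are missing or mishandled. First, the equations $M_+^+=S_+^+$ and $M_-^-=S_-^-$ hold because $\mathcal{P}_+^+,\mathcal{P}_-^-,\mathcal{F}_+^+,\mathcal{F}_-^-$ are \emph{empty among minimal diagrams}: when the root-edge overcrosses (or undercrosses) at both endpoints, the bigon formed with the parallel strand can be removed by a Reidemeister move of Type~II (after an ambient isotopy in the parallel case), so such a diagram is never minimal. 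You correctly identify this claim as ``the main obstacle,'' but the justification you sketch — a forced configuration of the overcrossing pair at the \emph{bridge} vertex of a series composition — is the wrong kind of argument: $\mathcal{P}$- and $\mathcal{F}$-type networks have no bridge, and these classes are nonempty as map classes; they are excluded only by the minimality requirement via the Type~II move. As written, this step is not established.

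Second, your treatment of $F_i^j$ glosses over the crossings \emph{interior} to the chain of double edges. You say the equation has ``the same shape as in Proposition~\ref{principal}, now with $\vv{M}$ replaced by $\vv{M_1}$,'' but in Proposition~\ref{principal} the crossing pattern is not encoded at all (it is restored at the end by a global factor $2^{n/2}$), whereas here each crossing of the chain carries an over/under choice. The paper's argument is that minimality forces these crossings to \emph{alternate} (otherwise a Type~II move applies), so that the whole chain of $k$ double edges is the unique minimal diagram of $T(k)$ and the pattern is determined by $i,j$; one then checks by a two-case analysis that the recursion $F_i^j=(z+z^2\vv{M_1})^2\bigl(F_i^j+2z(z+z^2\vv{M_1})^2\bigr)$ still counts correctly. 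Without the alternation argument your derivation would overcount by a factor of $2$ per interior crossing. A related symptom: your ``sanity check'' that summing the four $M_i^j$ equations recovers the system of Proposition~\ref{principal} with $\vv{M}$ replaced by $\vv{M_1}$ does not quite work — summing $P_-^+$ and $P_+^-$ yields $2z^3(1+z\vv{M_1})^3+zF\vv{M_1}$, with an extra factor $2$ on the first term relative to the unrefined $P$, precisely because the refined system tracks crossings and the unrefined one does not.
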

\begin{proof}
The defining equations for $\vv{M_1},M_{-}^{+},M_{+}^{-}$ are straightforward. Observe that $\mathcal{P}_{+}^{+}, $ $\mathcal{P}_{-}^{-},$ $\mathcal{F}_{+}^{+},$ $\mathcal{F}_{-}^{-}$ are empty, since they can be transformed to diagrams with fewer crossings with a Type II Reidemeister move (in the case of parallel networks, this could require first an ambient isotopy of the link that allows this move). 
Hence, the defining equations for $M_{+}^{+} $ and $M_{-}^{-} $ are also justified. For the classes $S_{i}^{j}$, recall that a series map is decomposed into another map $R_1$ and a non-series map $R_2$, joined together with an edge. Then, the head of its root-edge must agree (with respect to overcrossing or undercrossing) with the head of $R_1$, and the tail of the its root-edge must agree with the tail of $R_2$. This suffices for minimality, since the crossing number in our link classes is additive. In fact, whenever a pasting of an object occurs in this construction, it corresponds to a connected sum and, by  additivity, minimality is not affected. Thus follow the equations for $P_{+}^{-}$ and $P_{-}^{+}$.

Recall that each object in $\mathcal{F}$, thus also in $\mathcal{F}_{i}^{j}$, is associated to a series of double edges. The corresponding crossings are now uniquely defined by $i,j$ and they must alternate. Suppose $R_2\in \mathcal{F}_1$ is used in the recursive construction of $R_1\in \mathcal{F}_{i}^{j}$. Then, there are two cases for $R_2$. Either the crossings of its root edge agree with $i,j$ and it is of the type (b) in Figure~\ref{4-regMaps}, or the crossings of its root edge do not agree with $i,j$ and it is of the type (a). Otherwise, the diagram can be simplified by a Reidemeister Type II move (after a suitable  ambient isotopy of the link that allows this move).  Observe that each such series of $k$ double edges constitutes a minimal link-diagram of the torus link $T(k)$, thus cannot be further simplified. Since the sum of the objects in these two cases is equal to $(\mathcal{F}_i^j)_n$ for every $n$, we can use the GF $F_i^j$. Finally, the objects pasted on the double edges contribute to the crossing number additively.
%
\end{proof}

\begin{theorem}\label{terrorize}
The class of $K_4$-free minimal rooted link-diagrams $\vv{\mathcal{M}^+_1}$ grows asymptotically as:
\[[z^n] \vv{\mathcal{M}_1}(z)\sim  \frac{c}{\Gamma (-1/2)}\cdot n^{-3/2} \cdot \rho ^{-n} ,\] where $n$ is even and $\rho , c,$ are constants; in particular, $ \rho \approx  0.41456$ ($\rho ^{-1}\approx 2.41214$) and $c\approx -1.62846$.
\end{theorem}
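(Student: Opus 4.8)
The plan is to mimic exactly the argument used for Theorem~\ref{collector}, now starting from the polynomial system of Proposition~\ref{minimal} rather than the one of Proposition~\ref{principal}. First I would eliminate all auxiliary series $M_{i}^{j}$, $S_{i}^{j}$, $P_{i}^{j}$, $F_{i}^{j}$ from the twelve equations and keep only $\vv{M_1}(z)$, obtaining an algebraic equation $p_{\vv{M_1}}(z,\vv{M_1}(z))=0$ for some explicit polynomial $p_{\vv{M_1}}$. This is a routine Gr\"obner/resultant computation carried out in the accompanying \texttt{Maple} session. By symmetry of the system under swapping the decorations $+\leftrightarrow -$ (which exchanges $M_{-}^{+}\leftrightarrow M_{+}^{-}$, $S_{+}^{+}\leftrightarrow S_{-}^{-}$, $P_{-}^{+}\leftrightarrow P_{+}^{-}$, $F_{-}^{+}\leftrightarrow F_{+}^{-}$, etc.), one checks the system is consistent and has a unique power-series solution with nonnegative coefficients.

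Next I would set $F(z,y)=p_{\vv{M_1}}(z,y)+y$ and verify that the hypotheses of Theorem~\ref{drmota} are met, exactly as in the proof of Theorem~\ref{collector}: although $F(z,y)$ need not have nonnegative Taylor coefficients, the original combinatorial system in Proposition~\ref{minimal} does, and~\cite{drmota1997systems} covers positive systems of this shape. One locates the positive solution pair $(\rho,s)$ of $\{\,y=F(z,y),\ 1=\partial_y F(z,y)\,\}$, checks $\partial_z F(\rho,s)\neq 0$ and $\partial_{yy}F(\rho,s)\neq 0$, and concludes that $\vv{M_1}(z)$ has a square-root singular expansion
\begin{equation*}
\vv{M_1}(z)=M_0+M_1\bigl(1-z/\rho\bigr)^{1/2}+M_2\bigl(1-z/\rho\bigr)+\mathcal{O}\bigl((1-z/\rho)^{2}\bigr)
\end{equation*}
in a dented disk at $\rho$. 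Since every edge count is even (each diagram has $2n$ edges for $n$ crossings), $\vv{M_1}(z)$ is an even function, so there is a matching expansion at $-\rho$ with the same coefficients, and $\pm\rho$ are the only singularities of minimal modulus. The Transfer Theorem (Equation~\eqref{everybody}) then yields
\begin{equation*}
[z^n]\,\vv{M_1}(z)\sim \frac{M_1}{\Gamma(-1/2)}\,\rho^{-n}\bigl((-1)^n+1\bigr),
\end{equation*}
which gives the stated $n^{-3/2}\rho^{-n}$ growth for even $n$ with $c=M_1$. Unlike Theorem~\ref{collector}, no extra factor $2^{n/2}$ is needed here, because the crossing pattern is already built into the combinatorial decomposition of $\vv{\mathcal{M}_1}$, so $\vv{M_1^+}=\vv{M_1}$ as generating functions.

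The numerical constants $\rho\approx 0.41456$ and $c\approx -1.62846$ then follow by solving the relevant algebraic equations numerically and extracting $M_1$ via the explicit formula for $h_0$ in Theorem~\ref{drmota} (and the fact that $M_1=-h_0$ with the sign convention $Z=\sqrt{1-z/\rho}$). The main obstacle I anticipate is not conceptual but computational and verificational: performing the algebraic elimination cleanly enough to isolate a manageable $p_{\vv{M_1}}$, and then rigorously checking the aperiodicity/uniqueness-of-dominant-singularity condition ($y_k>0$ for all large $k$, so that $\pm\rho$ are the only singularities on the circle of convergence) together with the nonvanishing of the partial derivatives at $(\rho,s)$ — exactly the points where the combinatorial input (nonnegativity of the coefficients of the Proposition~\ref{minimal} system) must be invoked to push the argument of~\cite{drmota1997systems} through despite $F(z,y)$ itself not being a positive system.
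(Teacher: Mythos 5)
Your proposal matches the paper's proof, which is stated there in one line as ``almost identical to Theorem~\ref{collector}'': algebraic elimination of the auxiliary series from the system of Proposition~\ref{minimal} to get an explicit defining polynomial $p_{\vv{M_1}}(z,y)$, application of Theorem~\ref{drmota} to $F(z,y)=p_{\vv{M_1}}(z,y)+y$ (justified by positivity of the original combinatorial system rather than of $F$ itself), transfer of the square-root singularities at $\pm\rho$, and the observation that no $2^{n/2}$ factor appears because the crossing pattern is already encoded in the decomposition. No substantive differences.
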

\begin{proof}
The proof is almost identical to the one in Theorem~\ref{collector}. Only two things change: the defining polynomial of $\vv{\mathcal{M}_1}(z)$,  is equal to
\begin{align*}p_{\vv{M}_1}(z,y) =\; &  2\,{y}^{6}{z}^{11}+12\,{y}^{5}{z}^{10}+30\,{y}^{4}{z}^{9}+2\,{y}^{4}{z
}^{7}+40\,{y}^{3}{z}^{8}+8\,{y}^{3}{z}^{6}+30\,{y}^{2}{z}^{7}+{y}^{3}{
z}^{4}+\\ &  +12\,{y}^{2}{z}^{5}+12\,y{z}^{6}  +2\,{y}^{2}{z}^{3}+8\,y{z}^{4}+2
\,{z}^{5}+y{z}^{2}+2\,{z}^{3}-y.
\end{align*}
and the crossing pattern is already taken into account by the combinatorial construction.
\end{proof}

The first terms of the series $\vv{M_1}^+$ are the following:
\[\vv{M_1}^+=2\,{z}^{4}+4\,{z}^{6}+20\,{z}^{8}+84\,{z}^{10}+372\,{z}^{12}+1796\,{z
}^{14}+8516\,{z}^{16}+42340\,{z}^{18}+211332\,{z}^{20}+...
\]

\subsection{Link-diagrams of the unknot}\label{unknot}

Let $\vv{ \mathcal{M}_2}, \vv{ \mathcal{M}_2^+}$ be the classes of rooted link-diagrams of the unknot not counting and counting, respectively, the root-edge. Let $\mathcal{M}_2$ be the unrooted $\vv{ \mathcal{M}_2^+}$. We define the $\vv{ \mathcal{M}_2}$ subclasses $\mathcal{U}_2, \mathcal{S}_2 , \mathcal{P} _2, \mathcal{F}_2 $ of $ \mathcal{U},\mathcal{S} , \mathcal{P} , \mathcal{F}$, such that each contains all diagrams of the unknot in its respective superclass. We then partition each of these classes into four smaller combinatorial classes, which we denote with the same symbols as in the previous subsection for simplicity, i.e., $\mathcal{M} _{i}^j,\mathcal{U}_i^j,\mathcal{S} _{i}^j, \mathcal{P} _{i}^j, \mathcal{F} _{i}^j$, where $i,j\in\{-,+\}$. We denote by $M_{i}^j,U_i^j,S_{i}^j,P_{i}^j,F_{i}^j$, where $i,j\in\{-,+\}$, the corresponding generating functions (keeping the same convention, all rooted classes are counted by the number of edges, excluding the root-edge).

We also need the classes $\mathcal{T}_{r}$, $r\in \{ 1, 3\}$, that correspond to all possible ways to split a sequence of $2n+r$ points into two groups of size $n$ and $n+r$. Then,
\[ T_r(z) =z^r\sum _{n\geq 0} \binom{2n+r}{n}z^{2n}.\]
Observe that
\begin{align*}
  \sum _{n\geq 0} \binom{2n+r}{n}z^{n} &=  1+\sum _{n\geq 1}\bigg(\binom{2n+r-1}{n-1}+ \binom{2n+r-1}{n} \bigg) z^{n} \\
  &=  \sum _{n\geq 1}\binom{2n+r-1}{n-1} z^{n}+ \sum _{n\geq 0}\binom{2n+r-1}{n} z^{n}  \\
  &=  \frac{2z}{r}\sum _{n\geq 0}\frac{nr}{2n+r}\binom{2n+r}{n}z^{n-1}+\sum _{n\geq 0}\frac{r}{2n+r}\binom{2n+r}{n}z^n\\
    &= \frac{2z}{r}[B_2(z)^{r}]'+B_2(z)^{r} ,
 \end{align*}
where $B_t(z)$ are known as \emph{generalised binomial series}, which satisfy the equality $$B_t(z)^{r}=\sum _{n\geq 0}\binom{tn+r}{n}\frac{r}{tn+r}z^n$$ (see \cite[Ch. 5.4]{knuth1989concrete}). In particular, $B_2(z)$ is the series of Catalan numbers: $B_2(z)=\frac{1-\sqrt{1-4z}}{2z}$. Then
\[ T_r(z) =z^r\bigg[\frac{2z}{r}[B_2(z)^{r}]'+B_2(z)^{r}\bigg]\bigg| _{z=z^2}.\]

%


\begin{proposition}
 The generating function of $K_4$-minor-free rooted link-diagrams of the unknot, $\vv{M_2}(z)$, denoted also by $\vv{M_2}$, satisfies the following polynomial system of equations:

\hspace{-1.6cm}\begin{minipage}{.5\textwidth}
\begin{align*}
\vv{M_2}  &=  M_{+}^{+}+M_{-}^{+}+M_{+}^{-}+M_{-}^{-}\\
M_{+}^{+} &=  S_{+}^{+}+P_{+}^{+}+F_{+}^{+}\\
M_{-}^{+} &=  S_{-}^{+}+P_{-}^{+}+F_{-}^{+}+L^+\\
M_{+}^{-} &=  S_{+}^{-}+P_{+}^{-}+F_{+}^{-}+L^-\\
M_{-}^{-} &=  S_{-}^{-}+P_{-}^{-}+F_{-}^{-}\\
P_{+}^{+} &=   zF_+^+\vv{M_2}\\
P_{-}^{+} &=   zF_-^+\vv{M_2}\\
P_{+}^{-} &=   zF_+^-\vv{M_2}\\
P_{-}^{-} &=   zF_-^-\vv{M_2}\\
\end{align*}
\vspace*{-1.3cm}
\end{minipage}
\hspace{-.9cm}\scalebox{.93}{\begin{minipage}{.5\textwidth}
\begin{align*}
U^+ &=  2z+z^2\vv{M_2}\\
U^- &=  2z+z^2\vv{M_2}\\
S_{+}^{+} &=  z(M_{-}^{+}+M_{+}^{+})(M_{+}^{+}+M_{+}^{-}-S_{+}^{+}-S_{+}^{-})\\
S_{-}^{+} &=  z(M_{+}^{+}+M_{-}^{+})(M_{-}^{-}+M_{-}^{+}-S_{-}^{-}-S_{-}^{+})\\
S_{+}^{-} &=  z(M_{-}^{-}+M_{+}^{-})(M_{+}^{+}+M_{+}^{-}-S_{+}^{+}-S_{+}^{-})\\
S_{-}^{-} &=  z(M_{-}^{-}+M_{+}^{-})(M_{-}^{-}+M_{-}^{+}-S_{-}^{-}-S_{-}^{+})\\
F_{-}^{+} &=   4z(z+z^2\vv{M_2})^2T_1((z+z^2\vv{M_2})^2) \\
F_{+}^{-} &=   4z(z+z^2\vv{M_2})^2 T_1((z+z^2\vv{M_2})^2)\\
F_{+}^{+} &=  2z(z+z^2\vv{M_2})^2\big(T_{1}((z+z^2\vv{M_2})^2)+T_{3}((z+z^2\vv{M_2})^2)\big) \\
F_{-}^{-} &=  2z(z+z^2\vv{M_2})^2\big( T_{1}((z+z^2\vv{M_2})^2)+T_{3}((z+z^2\vv{M_2})^2)\big)
\end{align*}
\end{minipage}}
\end{proposition}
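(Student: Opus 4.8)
The plan is to adapt the decomposition of Proposition~\ref{principal} to the class $\vv{\mathcal{M}_2}$ of link-diagrams of the unknot, refining it so that the crossing pattern $\sigma$ is recorded \emph{and} the constraint $L(D)=\text{unknot}$ is enforced locally in each block of the $st$-decomposition. The key structural fact I would use is the one established in Section~\ref{monuments}: a reduced connected $K_4$-minor-free link-diagram is a sequence of $2$-edge sums of the diagrams $\hat{C}_i$, so the underlying link is a connected sum of torus links $T(q)$, and by additivity of the crossing number (and Theorem~\ref{masochism}) the whole diagram represents the unknot if and only if \emph{each} $\hat{C}_i$-block does. Since $T(q)$ is the unknot exactly for $q\in\{-1,0,1\}$, a $\hat{C}_i$-block can only appear with $i\le 1$, i.e.\ as a single loop, a single non-crossing edge, or a ``parallel'' pattern whose crossings cancel in pairs --- this is precisely what forces the generating functions $T_1,T_3$ to appear, counting the admissible balanced splittings of over/under strands along a chain of double edges.

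Concretely I would carry out the following steps, in order. First, recall the four root-edge types (Figure~\ref{root_types}) and split $\vv{\mathcal{M}_2}$, and each of its blocks $\mathcal{U},\mathcal{S},\mathcal{P},\mathcal{F}$, into the subclasses $\mathcal{X}_i^j$, $i,j\in\{+,-\}$, according to whether the tail and head half-edges of the root-edge are overcrossing. Second, re-derive the series equations for $S_i^j$ exactly as in Proposition~\ref{minimal}: a series block is a non-series block $R_1$ joined by a bridge to an arbitrary block $R_2$, the head type of the root-edge is inherited from $R_1$ and the tail type from $R_2$, and since pasting corresponds to connected sum the unknot condition is preserved block-by-block; this gives the four quadratic relations for $S_+^+,S_-^+,S_+^-,S_-^-$. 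Third, handle $\mathcal{U}$: a loop block is a loop carrying one of two crossing choices at its vertex, or a loop edge on which a diagram of the unknot is pasted, yielding $U^\pm = 2z + z^2\vv{M_2}$ --- here the loop, viewed as a $\hat C_1$, is automatically unknotted, so both root-types $+$ and $-$ survive (unlike in the minimal case). Fourth, and this is the heart of the argument, analyse $\mathcal{F}$ and $\mathcal{P}$: an $\mathcal{F}$-block is a chain of $k\ge 2$ double edges (a diagram of $T(k)$ up to the pasted sub-diagrams) and it represents the unknot iff the induced $\pm$ pattern on the $2k$ relevant half-edges is balanced in the torus-link sense, which is exactly a splitting of $2n+r$ points into groups of sizes $n$ and $n+r$ with $r\in\{1,3\}$ depending on the prescribed root types $i,j$; substituting $(z+z^2\vv{M_2})^2$ for the ``double-edge with optional pasted diagram'' atom then produces the stated $F_i^j$ formulas, and feeding an arbitrary unknot-diagram onto the extra single edge of a parallel block gives $P_i^j = zF_i^j\vv{M_2}$. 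Finally assemble $\vv{M_2} = \sum_{i,j}M_i^j$ with $M_i^j = S_i^j + P_i^j + F_i^j$ plus the $L^\pm := U^\pm$ terms in the two mixed classes, which records the contribution of the loop block in the global decomposition.

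The step I expect to be the real obstacle is the fourth one --- precisely pinning down \emph{which} crossing patterns on a chain of double edges yield the unknot, and translating that into the binomial counts $\binom{2n+r}{n}$. One must argue carefully that a diagram whose graph is $\hat C_k$ is the unknot exactly when the over/under strands can be ``cancelled'' by Reidemeister~II moves, equivalently when along the chain the number of crossings of each of the two types, read appropriately, differ by the fixed offset $r\in\{1,3\}$ imposed by the boundary root-types $i,j$; the bookkeeping of why $r=1$ in the mixed cases $F_\pm^\mp$ and $r\in\{1,3\}$ (summed) in the pure cases $F_\pm^\pm$, together with the multiplicative factor $2$ versus $4$ accounting for the placement of the lone single edge relative to the root, is where the combinatorics is most delicate. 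Everything else --- the series relations, the loop block, and the passage $\mathcal{P}\leftrightarrow\mathcal{F}$ via an extra pasted diagram --- is a direct transcription of the already-proved Propositions~\ref{principal} and~\ref{minimal}, using only that pasting $=$ connected sum and that the crossing number is additive over connected sums of torus links.
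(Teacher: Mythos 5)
Your overall architecture matches the paper's proof: the same $\mathcal{U},\mathcal{S},\mathcal{P},\mathcal{F}$ decomposition refined by the four root-crossing types, the series equations carried over verbatim from Proposition~\ref{minimal}, the observation that the term $z^3(1+z\vv{M_2})^3$ must be dropped from $P_i^j$ because a parallel block with $R^{-}-\{s,t\}$ empty or disconnected produces a two-component link, and the appearance of $T_1$ and $T_3$ in the $\mathcal{F}$-equations. Two small corrections to your set-up: the reduction ``the whole diagram is the unknot iff each block is'' follows from the triviality theorem for connected sums (Kawauchi, Theorem 3.2.1, quoted just before Theorem~\ref{masochism}), not from additivity of the crossing number; and it is not that only $\hat{C}_i$ with $i\le 1$ can occur as graph blocks, but that blocks whose graph is $\hat{C}_k$ with $k\ge 2$ must carry a crossing pattern whose represented link is trivial.

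The step you flag as ``the real obstacle'' is, however, the one piece of genuinely new content in this proposition, and your proposal asserts rather than proves it: a criterion for which crossing patterns on a chain of double edges yield the unknot. The paper resolves this with a sign convention. Traversing the knot in the direction of the root-edge assigns two tangent arrows to each crossing; there is a unique face adjacent to both arrows, and the crossing is labelled $+$ or $-$ according to which of the left/right arrows overcrosses relative to that face. Two consecutive chain vertices with opposite signs cancel by a Reidemeister~II move, so the block represents the torus link $T(s)$ where $s$ is the total signed sum; hence it is the unknot iff $s=\pm 1$ (even $s$ gives two components, odd $|s|\ge 3$ a nontrivial torus knot). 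The two root crossings contribute $0$ in the mixed classes and $\pm 2$ in the pure ones, which forces the chain sum to be $+1$ or $-1$ (so $T_1$ counted twice, giving the factor $4=2\cdot 2$ together with the two positions of the single edge) in $F_{-}^{+},F_{+}^{-}$, and $\mp 1$ or $\mp 3$ (so $T_1+T_3$) in $F_{+}^{+},F_{-}^{-}$. Without this argument, or some equivalent identification of the link type carried by an arbitrary crossing pattern on $\hat{C}_k$, the binomial counts $\binom{2n+r}{n}$ encoded by $T_r$ are unjustified, so the proposal is incomplete precisely at the point where this proposition departs from Propositions~\ref{principal} and~\ref{minimal}.
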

\begin{proof}
The defining equations for $\vv{M_2}, M^i_j,S^i_j$ can be justified in the same way as in Proposition~\ref{principal} and Proposition~\ref{minimal}. Let $R\in \mathcal{P}_i^j$. If $R^--\{s,t\}$ is empty or disconnected, then $R$ has two components and does not represent the unknot. Thus, $P_{i}^{j}  =   zF_i^j\vv{M_2}$ (recall the construction in Proposition~\ref{principal}).

The equations for the classes $\mathcal{F}_i^j$ need to change substantially. Let $R\in \mathcal{F}_i^j$. Recall that $R$ is decomposed into the root-edge $e_1$, an edge $e_2$ parallel to it (either to the left or to the right face that is adjacent to $e_1$), and a chain of double edges, $C$, on which other objects of $\vv{\mathcal{M}_2}$ may be pasted.

Traversing the knot in the direction of the root-edge, we can associate on each point of the knot a tangent arrow. Consider the corresponding arrows on the link-diagram and notice that each crossing point has two such arrows. Moreover, there is a unique face of the diagram that is adjacent to both arrows, let us call it $F$. On each crossing point, we associate a plus sign or a minus sign, according to whether the left or the right arrow is overcrossing, with respect to the joint direction of the two arrow heads on $F$. Observe that if two consecutive vertices on $C$ bear different signs, the diagram can be reduced by a move of Type II. Hence, in order to obtain a trivial knot, the sum $s$ of the signs should be either $+1$ or $-1$: otherwise, either we have more than one component, or the diagram corresponds to a non-trivial knot. The sum of the signs of the root vertices can be $0$ or $\pm 2 $.

We use the generating functions $T_{1}$ and $T_{3}$ that encode all the possibilities, so that the total sum of the signs on $C$ equals $\pm1$. In particular, when the sum on the root vertices is zero, we use the GF $T_1$ twice, since we distinguish on whether the total sum is $-1$ or $1$. When the sum of the root vertices is $2$ or $-2$, we use the functions $T_1,T_3$ that account likewise for both cases. We substitute each atom on $T_1,T_3$ by a double edge that may or may not have other objects pasted, and obtain $T_{1}((z+z^2\vv{M_2})^2),T_{3}((z+z^2\vv{M_2})^2)$. The extra factor $(z+z^2\vv{M_2})^2$ accounts for the first  double edge after the head of the root.
%
%
%
\end{proof}


\begin{theorem}\label{necessary}
The class of $K_4$-free rooted link-diagrams of the unknot $\vv{\mathcal{M}_2^+}$ grows asymptotically as:
\[[z^n]\vv{M_2}(z)\sim \frac{c}{\Gamma (-1/2)}\cdot n^{-3/2} \cdot \rho ^{-n} ,\] where $n$ is even and $\rho , c,$ are constants; in particular,  $ \rho \approx 0.23626$ ($\rho ^{-1}\approx 4.23249$), $c\approx -3.39943$.\end{theorem}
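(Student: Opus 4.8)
The plan is to follow the scheme of the proofs of Theorems~\ref{collector} and~\ref{terrorize}, the only genuinely new feature being that the system of the preceding Proposition is not polynomial: the series $T_1$ and $T_3$ are built from the Catalan series $B_2(z)=\tfrac{1-\sqrt{1-4z}}{2z}$. First I make the system polynomial by introducing two auxiliary unknowns $b,b'$ standing for $B_2$ and $B_2'$ evaluated at the argument $W$ that actually occurs in the equations for the $F_i^j$ (namely $W=\bigl(z+z^2\vv{M_2}\bigr)^4$, since $T_1$ and $T_3$ have radius $1/2$ and are composed with $\bigl(z+z^2\vv{M_2}\bigr)^2$); these satisfy the Catalan relation $Wb^2-b+1=0$ and its derivative $b^2+2Wbb'-b'=0$, after which $T_1$ and $T_3$ become rational in $z,\vv{M_2},b,b'$. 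Adjoining these two relations to the polynomial system for $\vv{M_2},M_i^j,S_i^j,P_i^j,F_i^j,U^\pm$ and eliminating all auxiliary unknowns by successive resultants (carried out in the accompanying \texttt{Maple} session) yields a polynomial $p_{\vv{M_2}}(z,y)$ with $p_{\vv{M_2}}\bigl(z,\vv{M_2}(z)\bigr)=0$, and likewise a polynomial for $z\vv{M_2}(z)$.

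Next, exactly as in Theorem~\ref{collector}, I set $F(z,y)=p_{z\vv{M_2}}(z,y)+y$ and apply Theorem~\ref{drmota}: the conditions $F(0,y)\equiv 0$, $F(z,0)\not\equiv 0$ and nonnegativity of the Taylor coefficients hold because the underlying combinatorial system is a proper system with nonnegative coefficients, and, as recalled in the excerpt, the statement of Theorem~\ref{drmota} still applies in that situation even though $p_{z\vv{M_2}}(z,y)+y$ itself may have negative coefficients. This produces a square-root singular expansion $z\vv{M_2}(z)=M_0+M_1(1-z/\rho)^{1/2}+M_2(1-z/\rho)+\mathcal{O}\bigl((1-z/\rho)^2\bigr)$ in a dented disk at the positive dominant singularity $\rho$. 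Since $\vv{M_2}(z)$ has only even powers of $z$, Pringsheim together with the symmetry $z\mapsto -z$ produces a twin singularity at $-\rho$ with the same singular coefficients up to sign and shows there is no other singularity of modulus $\rho$. The transfer theorem, in the form of~\eqref{everybody}, then gives $[z^n]\vv{M_2}(z)\sim \tfrac{c}{\Gamma(-1/2)}\,n^{-3/2}\rho^{-n}$ for even $n$; as in Theorem~\ref{terrorize} no $2^{n/2}$ factor appears, because the crossing pattern is already built into the combinatorial decomposition. The numerical values $\rho\approx 0.23626$ and $c\approx -3.39943$ are read off from $p_{\vv{M_2}}$ and the explicit $h_0$-formula of Theorem~\ref{drmota}.

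The step I expect to be the main obstacle is checking that $\rho$ is really the dominant singularity of $\vv{M_2}$ and not one inherited from $B_2$. The compositions $T_1\bigl((z+z^2\vv{M_2})^2\bigr)$ and $T_3\bigl((z+z^2\vv{M_2})^2\bigr)$ stop being analytic once their inner arguments reach the common radius of convergence $1/2$ of $T_1$ and $T_3$, so one must verify that $\bigl(z+z^2\vv{M_2}(z)\bigr)^2<\tfrac12$ for all $0<z\le\rho$; concretely, after computing $\rho$ one evaluates this quantity at $z=\rho$ and confirms the strict inequality, so that the singularity of $\vv{M_2}$ is indeed the branch point produced by the implicit-function mechanism of Theorem~\ref{drmota} and carries the clean exponent $1/2$. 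Along the way one also checks the non-degeneracy hypotheses $\partial_z F(\rho,s)\neq 0$ and $\partial_{yy}F(\rho,s)\neq 0$ of Theorem~\ref{drmota}, and that the branch of $p_{\vv{M_2}}$ selected is the combinatorial power series vanishing at $z=0$. The remaining work --- the resultant computations, the coefficients of the singular expansion, and the numerical evaluations --- is routine and is recorded in the \texttt{Maple} session.
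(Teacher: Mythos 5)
Your proposal is correct and follows essentially the same route as the paper: reduce the system to a single algebraic/analytic equation for $z\vv{M_2}(z)$, apply Theorem~\ref{drmota} to obtain a square-root expansion at the twin singularities $\pm\rho$, and transfer, with no $2^{n/2}$ factor since the crossings are built into the decomposition; whether one eliminates after substituting the closed forms of $T_1,T_3$ (as the paper does) or first adjoins the Catalan relations $Wb^2-b+1=0$ and its derivative to get a pure polynomial (as you do) is immaterial. Your explicit verification that $\bigl(z+z^2\vv{M_2}(z)\bigr)^4$ stays strictly below $1/4$ at $z=\rho$, so that the dominant singularity is the branch point of the implicit equation rather than one inherited from $B_2$, is a subcriticality check the paper leaves implicit and is worth recording.
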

\begin{proof}
We first obtain the defining polynomial of $\vv{M_2}(z)$ with respect to $z,M, t_1$,$t_3$, denoted by $p_{\vv{M_2}}$, by means of algebraic elimination ($y$ stands for $M$):
\begin{align*}p_{\vv{M}_2}(z,y, t_1,t_3) =\; & 12\,y^{4}{z}^{7}t_1+4\,y^{4}{z}^{7}t_3+48\,y^{3}{z}^{6}t
_{{1}} +16\,y^{3}{z}^{6}t_3+72\,y^{2}{z}^{5}t_1+\\ & +24\,y^{2}
{z}^{5}t_3+48\,y^{4}t_1+16\,y^{4}t_3+4\,y^{2}{z}^{
3}+12\,{z}^{3}t_1+4\,{z}^{3}t_3+\\  &+8\,y^{2}-y+4\,z.
\end{align*}
Then, we substitute $t_1$ and $t_3$ by the closed forms of $T_1(z)$ and $T_3(z)$, where $z$ is substituted by $(z+z^2x)^2$. The rest of the analysis is identical to Theorem~\ref{collector}, noting that in this case the crossing pattern is already taken into account by the combinatorial construction.
\end{proof}

\noindent The first terms of the series $\vv{M_2}^+$ are the following:
\[\vv{M_2^+}=4\,{z}^{2}+32\,{z}^{4}+332\,{z}^{6}+3968\,{z}^{8}+51688\,{z}^{10}+
712416\,{z}^{12}+10214604\,{z}^{14}+150776064\,{z}^{16}+...\]

\section{The unrooting argument}\label{unrooting}

In this section, we develop an unrooting argument for the families of link-diagrams we have enumerated, using results from~\cite{richmond1995almost} and~\cite{bender1992submaps}.

A link-diagram is \emph{symmetric} if it admits a non-trivial map automorphism that also identifies the crossing structure $\sigma$. We will first show that the proportion of objects in $\mathcal{M}_n,$ $(\mathcal{M}_1)_n,$ $(\mathcal{M}_2)_n$ that are symmetric is exponentially small. To do so, it is enough to show the statement in the map level. Then, we can deduce asymptotic estimates for $|\mathcal{M}_n|,$ $|(\mathcal{M}_1)_n|,$ $ |(\mathcal{M}_2)_n|$ from the estimates we already have for $|\vv{\mathcal{M}}^+_n|,$ $|(\vv{\mathcal{M}}_1^+)_n|,$ $ |(\vv{\mathcal{M}}_2^+)_n|$.

We state some definitions from~\cite{richmond1995almost}: a \emph{submap} $R'$ of a map $R$ is a map such that $R'$ is a set of faces of $R$ and their boundary edges and vertices, and $R'$ is continuous.
We call $R\setminus R'$ the map obtained after removing the faces of $R'$.
 We say that two maps are \emph{glued} when we identify their outer faces, which have the same degree.

A map $R'$ is called \emph{outercyclic} if the edges of its unbounded face induce a cycle with no repeated vertices. It is called \emph{free} if in all its occurrences as submap in maps $R$, all maps resulting by gluing $R'$ to $R\setminus R'$, on the face where $R'$ initially belonged (let us call this face of $R\backslash R'$, $R$-\emph{hole}), belong to the same class of maps as R. It is called \emph{ubiquitous} if for small enough $c>0$, there is a positive $d<1$ such that the proportion of objects in $\mathcal{R}$ that do not contain at least $cn$ copies of $R'$ is at most $d^n$ for large enough $n$. Two maps have disjoint appearances when they do not share a face. The main Theorem in~\cite{richmond1995almost} gives sufficient conditions for a rooted map class to contain exponentially few symmetric maps.
\begin{theorem}[\cite{richmond1995almost}] Let $\mathcal{C}$ be a class of rooted maps on a surface. Suppose that there is an outer-cyclic rooted planar map $R_1$ such that in all maps in $\mathcal{C}$, all copies of $R_1$ are pairwise disjoint, and such that $R_1$
\begin{enumerate}
\item has no reflection symmetry in the plane (i.e. reflective symmetry preserving the unbounded face);
\item is free and ubiquitous in $\mathcal{C}$.
\end{enumerate}
Then the proportion of $n$-edged maps in $\mathcal{C}$ with non-trivial automorphisms is exponentially small.\label{almostall}
\end{theorem}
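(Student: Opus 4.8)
The plan is to follow the strategy of~\cite{richmond1995almost}. Write $c_n$ for the number of $n$-edged maps in the underlying unrooted class of $\mathcal{C}$, and $a_n$ for the number of those admitting a non-trivial automorphism; the goal is to bound $a_n \le d^n c_n$ for some fixed $d<1$ and all large $n$. The first thing I would use is that an automorphism of a map acts freely on its set of corners (an automorphism fixing a corner is the identity), so a non-trivial $\phi$ has finite order at least $2$, and after replacing it by a suitable power I may assume $\phi$ has prime order $p$. On the surface carrying the maps -- the sphere in our application -- $\phi$ is either \emph{orientation-preserving} or \emph{orientation-reversing}; I would bound, separately for each type, the number of maps in $\mathcal{C}_n$ admitting such a $\phi$ by a count of much smaller ``quotient'' structures, and then appeal to the exponential growth of $c_n$ (with rate $\rho^{-1}>1$, which in the applications of this paper is supplied by Theorems~\ref{collector},~\ref{terrorize},~\ref{necessary}, and which in general is already forced by the ubiquity of $R_1$) to conclude that those counts are exponentially smaller than $c_n$.

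In the orientation-preserving case, a prime-order automorphism $\phi$ allows one to reconstruct $M$ from the quotient map $M/\langle\phi\rangle$ together with a bounded amount of local gluing data (describing the at most two fixed points and how the $p$ sheets fit together); in particular $M/\langle\phi\rangle$ has $n/p + O(1)$ edges. Hence the number of such $M$ is at most $\mathrm{poly}(n)\cdot c_{\lceil n/p\rceil + O(1)}$, and summing over the $O(\log n)$ relevant primes $p\ge 2$ gives a bound of order $\mathrm{poly}(n)\cdot c_{\lceil n/2\rceil + O(1)}$. Since $c_{n/2}/c_n$ decays exponentially (because $c_n$ behaves like $C\,n^{\alpha}\rho^{-n}$ with $\rho<1$), this case is finished, and it is worth stressing that it does not use $R_1$ at all.

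The orientation-reversing case is the main obstacle, and it is precisely what the two hypotheses on $R_1$ are designed to handle. Here $\phi$ is an involution whose fixed locus is a one-dimensional ``axis'', and if that axis carried a constant fraction of the edges of $M$ then $M/\langle\phi\rangle$ would again have roughly $n$ edges and the counting bound would collapse. To prevent this I would invoke ubiquity of $R_1$: all but a $d_0^n$ fraction of maps in $\mathcal{C}_n$ contain at least $cn$ pairwise-disjoint copies of $R_1$. Any automorphism of $M$ permutes the submaps isomorphic to $R_1$ (outer-cyclicity makes such a copy a well-defined rigid sub-object that automorphisms must respect); if $\phi$ stabilised one such copy setwise, it would induce on it an orientation-reversing automorphism fixing its outer face, contradicting hypothesis~(1). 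So $\phi$ has no stabilised copy and must match the $\ge cn$ disjoint copies in pairs, whence all edges lying in these copies are moved by $\phi$; consequently the axis has at most $(1-c)n$ edges and $M/\langle\phi\rangle$ has at most $(1-c'')n$ edges for some $c''>0$. Freeness of $R_1$ (hypothesis~(2)) then guarantees that excising a copy of $R_1$ and later re-inserting it keeps us inside $\mathcal{C}$, so the reconstruction of $M$ from its halved structure can be organised so that the number of such $M$ is at most $\mathrm{poly}(n)$ times the number of relevant (possibly boundaried) pieces with $(1-c'')n + O(1)$ edges, which grows with the same exponential base $\rho^{-1}$ and is therefore exponentially smaller than $c_n$.

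Combining the two cases yields $a_n\le d^n c_n$. Finally, since a map $M$ with $n$ edges has exactly $2n/|\mathrm{Aut}(M)|$ distinct rootings, the asymmetric underlying maps each account for $2n$ rooted maps while the symmetric ones are exponentially rare, so the stated conclusion -- that the proportion of $n$-edged maps in $\mathcal{C}$ with a non-trivial automorphism is exponentially small -- follows. The one genuinely delicate point is the reflection case: the whole argument rests on using $R_1$ to produce linearly many edges that any reflection must move, which is exactly what assumptions~(1) and~(2) buy us.
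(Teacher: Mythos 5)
Your proposal takes a genuinely different route from the one in \cite{richmond1995almost}, which the paper reproduces (in relaxed form) in the proof of Lemma~\ref{relax}, and as written it has a real gap. In both of your cases you bound the number of symmetric maps by $\mathrm{poly}(n)$ times the number of quotient maps $M/\langle\phi\rangle$ (with $\approx n/p$ edges) or of boundaried ``half'' structures (with $\approx (1-c'')n$ edges), and then assert these counts are exponentially smaller than $c_n$ because they ``grow with the same exponential base $\rho^{-1}$.'' That assertion is unsupported: those quotient and boundaried objects are not members of $\mathcal{C}$ (the orbifold quotient of a $4$-regular sphere map need not be $4$-regular, let alone lie in the class, and a fundamental domain with boundary certainly does not), the theorem as stated gives no asymptotic $c_n\sim Cn^{\alpha}\rho^{-n}$ for a general class, and comparing instead against \emph{all} maps of the reduced size fails numerically even in this paper's applications: all rooted planar maps with $n/2$ edges number roughly $12^{n/2}\approx 3.46^{n}$, which already exceeds $\rho_1^{-n}\approx 2.41^{n}$ for the class $\vv{\mathcal{M}_1^+}$. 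So the orientation-preserving case, which you present as the easy one, does not close. A symptom of the problem is that your argument never really uses freeness of $R_1$ except to say re-insertion ``keeps us inside $\mathcal{C}$,'' whereas freeness is the engine of the actual proof.

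The argument of Richmond and Wormald avoids all reference to growth rates of auxiliary structures. From a map $M$ with $k\ge cn$ pairwise disjoint copies of $R_1$, excise them all to get $H$ with $k$ holes; freeness says every one of the $j^{k}$ ways of re-gluing ($j\ge 2$ distinct gluings per hole, by the lack of reflective symmetry of $R_1$) yields a map of $\mathcal{C}$, and these maps $C(H)$ partition the heavy part of $\mathcal{C}_n$. Any non-trivial automorphism $\sigma$ of a re-glued map restricts to an automorphism $\tau$ of $\hat H$ permuting the holes with at most two fixed points, so the gluing data of the map is determined by its values on at most $k/2+1$ orbit representatives; hence at most $j^{k/2+1}$ of the $j^{k}$ maps in $C(H)$ admit an automorphism over a given $\tau$, and there are at most $4n$ choices of $\tau$. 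Summing over $H$ gives $S(\mathcal{C}_n)/|\mathcal{C}_n|\le 4n\,j^{1-cn/2}$, with ubiquity disposing of the exponentially few maps lacking $cn$ copies. Your use of hypothesis (1) to show that a reflection cannot stabilise a copy of $R_1$ is in the right spirit, but you would need to replace both of your quotient-counting steps by this local double-counting over $C(H)$ for the proof to go through.
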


We will use the map depicted in Figure~\ref{anspricht}. When working with the classes $\mathcal{M},\mathcal{M}_1$ we can consider that it has the crossing pattern of $R_A$. When we work with $\mathcal{M}_2$ we can consider that it has the crossing pattern of $R_B$.
Notice that the map has size equal to 39.
\begin{figure}[h!]\centering
  \includegraphics[scale=1.5]{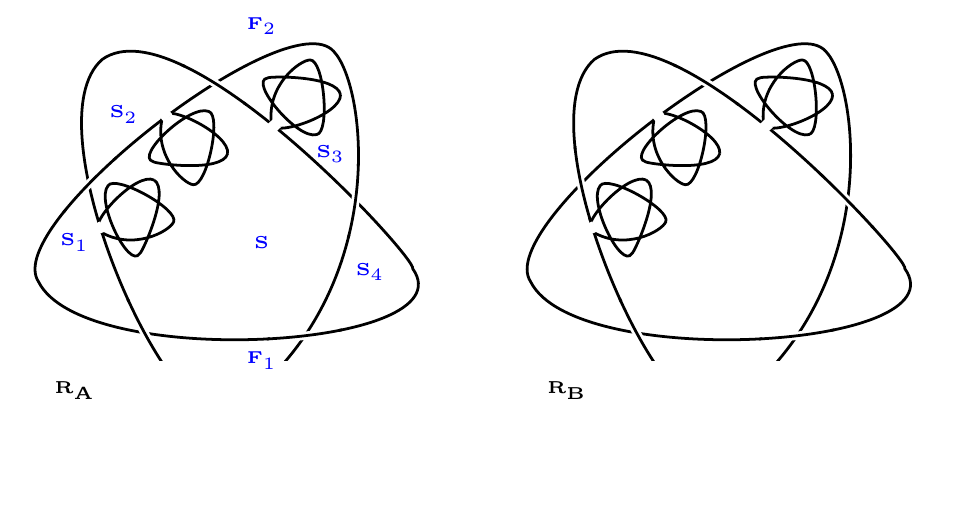}\vspace*{-1.8cm}
  \caption{An asymmetric and ubiquitous link-diagram in $\protect\vv{\mathcal{M}^+}$ and $\protect\vv{\mathcal{M}_1^+}$ ($R_A$), and in $ \protect\vv{\mathcal{M}_2^+}$ ($R_B$). The missing crossings follow the pattern of the existing crossings.}\label{anspricht}
\end{figure}
\begin{lemma}\label{corrected}
The appearances of $R_A$ as submap in $\vv{\mathcal{M}^+}$ or $\vv{\mathcal{M}_1^+}$ are disjoint. The same holds for the appearances of $R_B$ in $\vv{\mathcal{M}_2^+}$.
\end{lemma}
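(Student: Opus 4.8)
The plan is to show that $R_A$ (resp. $R_B$) is built around a structural ``core'' whose presence in a $K_4$-minor-free link-diagram forces a separation of the diagram into well-controlled pieces, and that two such cores cannot overlap in a face without creating a forbidden $K_4$-minor or violating $4$-regularity. Concretely, I would first isolate the combinatorial signature of an appearance of $R_A$: the submap consists of a specific configuration of double edges and bridges (a chain of $\hat{C}$-type blocks joined in series, together with the crossing pattern that makes it asymmetric), and its unbounded face is an outercyclic cycle. The key observation is that in any $R\in\vv{\mathcal{M}^+}$, removing the interior of a copy of $R_A$ leaves a $2$-edge cut (the two edges of the outercyclic boundary that attach to the rest of $R$), since $R$ is $4$-regular and $K_4$-minor-free. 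This is essentially the same $2$-edge-sum structure exploited in Lemma~\ref{inscribed}.

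\textbf{Main steps, in order.} (1) Describe precisely the faces and boundary of a copy of $R_A$ inside a host map, and record that its attachment to the host is through exactly two edges crossing its outercyclic boundary (using $4$-regularity: the boundary cycle has all its other incidences used up internally, a consequence of the $\hat{C}_n$-structure forced by Lemma~\ref{situation} on $3$-edge-connected pieces). (2) Suppose two copies $R'$ and $R''$ of $R_A$ share a face $\varphi$. Analyze the position of $\varphi$ relative to each copy: $\varphi$ is either a bounded face of $R'$ or its unbounded face. If $\varphi$ is bounded in both, then the vertices and edges on $\partial\varphi$ belong to the rigid internal part of both copies, and one checks directly from the (finite, size-$39$) structure of $R_A$ that the internal faces are small and their boundary edges are non-shareable with a second rigid copy without identifying vertices that would force degree $>4$ or create a $K_4$-minor. (3) If $\varphi$ is bounded in $R'$ but is the outer face of $R''$, then $R''$ would have to be glued into an internal face of $R'$, but the internal faces of $R_A$ have degree too small to host all $39$ edges of $R_A$; contradiction by a degree/size count. (4) The remaining case, $\varphi$ the outer face of both, is handled by noting that the two $2$-edge attachments would have to route through the same face, and then one of the copies is not outercyclic in $R$, or else a $K_4$-minor appears by contracting the two internal blocks and the connecting paths — again the argument of Lemma~\ref{archetype}.

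\textbf{What I would actually write.} Since the argument is combinatorial and local, I would carry it out by a short case analysis on how the shared face $\varphi$ sits inside each copy, invoking in each case either (a) a parity/degree count forbidding a vertex to lie on the rigid interior of two copies, or (b) the $K_4$-minor exclusion to rule out the two copies being joined through $\varphi$ by disjoint connecting paths, exactly mirroring the proof of Lemma~\ref{archetype} where three internally disjoint paths between two branch vertices already yield $K_4$. The statement for $R_B$ is identical: only the crossing pattern $\sigma$ differs, and disjointness of appearances is a property of the underlying map, which is the same for $R_A$ and $R_B$; so the proof for $R_B$ is literally the proof for $R_A$ with $\sigma$ adjusted.

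\textbf{The hard part} will be step (2): ruling out that the rigid internal portions of two copies of $R_A$ share boundary vertices or edges. This requires looking carefully at the explicit size-$39$ gadget in Figure~\ref{anspricht}, enumerating its internal faces and their degrees, and checking that no vertex of $R_A$ can simultaneously be an ``interior'' vertex of a second copy without exceeding degree $4$ or creating a $K_4$-topological-minor among the chain of double-edge blocks. Once the gadget is fixed, this is a finite check, but it is the only genuinely non-formal step; the other cases reduce cleanly to $4$-regularity, a size count, or the $K_4$-minor exclusion already used in Lemmas~\ref{archetype} and~\ref{situation}.
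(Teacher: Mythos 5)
Your proposal correctly reduces the statement to the case of $R_A$ and $\vv{\mathcal{M}^+}$ (disjointness depends only on the underlying map, so the $R_B$ case follows with the crossing pattern adjusted), and the overall shape of your argument --- a case analysis on how a shared face sits inside the two copies, terminating in a local check on the explicit gadget --- is the right one. But the decisive step is exactly the one you defer: in step (2) you acknowledge that ruling out a shared bounded face is ``the only genuinely non-formal step'' and leave it as ``a finite check'' without performing it or exhibiting the invariant that makes it succeed. That check is the entire content of the lemma, so as written the proof has a genuine gap. The paper closes it with a short face-adjacency argument: it singles out a face $S$ of $R_A$ whose presence in the second copy forces the two copies to coincide, and shows that if the second copy meets the first only in one of the remaining border faces $S_i$, then the map isomorphism between the two copies would have to identify $S$ with a face $F_2$ that is edge-adjacent to at least four faces, whereas $S$ is edge-adjacent to exactly two ($S_4$ and $F_1$). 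No $K_4$-minor exclusion, $2$-edge-cut analysis, or appeal to Lemmas~\ref{archetype} and~\ref{situation} is needed, and invoking them does not substitute for the missing computation.

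A second, smaller issue: your cases (3) and (4), in which the shared face is the \emph{outer} face of one or both copies, do not arise under the paper's definition of a submap. A submap is a set of faces of the host map $R$ together with their boundary edges and vertices, and two appearances are disjoint iff they share no such constituent face; the unbounded face of a copy of $R_A$ corresponds to a face of $R\setminus R_A$ (the $R$-hole), not to a constituent face of the submap. Likewise, the claim in step (1) that a copy of $R_A$ attaches to the host through exactly two edges is neither justified nor needed. Trimming these cases and supplying the missing adjacency-degree computation for the gadget of Figure~\ref{anspricht} would turn your outline into the paper's proof.
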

\begin{proof}
It is enough to prove the claim for $R_A$ and $\vv{\mathcal{M}^+}$. We denote by $S,S_i,F_i$ faces and neighbouring faces of $R_A$, respectively, as shown in Figure~\ref{anspricht}.
Let $R\in \vv{\mathcal{M}^+}$ and two distinct submaps of $R$, called $R_A^1,R_A^2$, such that $R_A^1, R_A^2\cong R_A$. Then, there is a map isomorphism $\phi$ that identifies $R_A^1$ to $R_A^2$.

Assume that $R_A^1,R_A^2$ share a face. If $S\in R_A^2$, then $R_A^2=R_A^1$, so we can assume that $S\not\in R_A^2$. Then at least one of the remaining border faces $S_i$ must belong to $R_A^2$, which implies that $F_2$ is equivalent to $S$ under $\phi$. But this is impossible regardless of $F_1$, since $F_2$ is neighbouring with one edge to at least four faces, while $S$ is neighbouring with one edge to exactly two faces ($S_4$ and $F_1$).\end{proof}


Lemma~\ref{brightest} follows by the general result~\cite[Theorem 2]{bender1992submaps}. For the sake of clarity and to point out that~\cite[Theorem 2]{bender1992submaps} also holds for maps with a crossing structure, we reproduce here the complete argument in a simplified way for our maps.

\begin{lemma}\label{brightest}
There exists $c>0$ small enough such that the proportion of objects in $\vv{\mathcal{M}_n^+},$ $(\vv{\mathcal{M}_1^+})_n,$  (resp. $(\vv{\mathcal{M}_2^+})_n$) that do not contain at least $cn$ copies of $R_A$ (resp. $R_B$) is exponentially small.\end{lemma}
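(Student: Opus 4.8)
The statement is a typical ``local configurations appear linearly often'' estimate, and the natural tool is the ``cluster'' or ``pointing'' argument as in \cite{bender1992submaps}. The plan is to work entirely at the level of rooted maps (with crossing structure for $\vv{\mathcal{M}_2^+}$) and to show that the bivariate generating function that marks, in addition to edges, the number of disjoint copies of $R_A$ (resp.\ $R_B$) has a singularity structure that forces the number of copies to concentrate around a positive multiple of $n$. Concretely, I would first revisit the combinatorial decompositions of Propositions~\ref{principal}, \ref{minimal} and the one for $\vv{M_2}$, and observe that the operation of ``pasting a map on an edge'' is exactly the mechanism by which a copy of $R_A$ (resp.\ $R_B$) can be inserted: since $R_A$ (resp.\ $R_B$) is outercyclic and, by Lemma~\ref{corrected}, its appearances are pairwise disjoint, pasting one copy on any edge of any diagram in the class yields another diagram in the same class with exactly one more disjoint copy, and these insertions are ``independent'' in the sense required by the symbolic method.

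\textbf{Key steps.} First I would introduce a catalytic variable $u$ marking disjoint copies of $R_A$ and rewrite the defining systems (Propositions~\ref{principal}, \ref{minimal}, and the unknot system) as systems in $z$ and $u$, obtaining generating functions $\vv{M}^+(z,u)$, $\vv{M_1}^+(z,u)$, $\vv{M_2}^+(z,u)$ with $\vv{M}^+(z,1)=\vv{M}^+(z)$ etc. The insertion of a copy of $R_A$ on an edge multiplies the ``pasting slot'' weight by $u\,z^{39}$ (the size of $R_A$ is $39$), so each occurrence of a factor of the form $z^2\vv{M}$ or $z+z^2\vv{M}$ that encodes an optional pasted submap becomes, after tracking the marked copies, a factor in which $\vv{M}$ is replaced by $\vv{M}^+(z,u)+u z^{39}(\text{base diagram})+\cdots$; more cleanly, one checks that the new system still satisfies the hypotheses of Theorem~\ref{drmota} for each fixed $u$ in a neighbourhood of $1$, so that $\vv{M}^+(z,u)$ has a square-root singularity at $z=\rho(u)$ with $\rho(u)$ analytic and strictly decreasing near $u=1$ (strict monotonicity because a copy of $R_A$ can genuinely be inserted, so $\partial_u$ of the relevant coefficients is positive). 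Then the standard argument — differentiating the singular relation, or equivalently applying the quasi-powers/Gaussian-limit machinery of \cite[Ch.~IX]{flajolet2009analytic} — gives that the number of disjoint copies of $R_A$ in a random $n$-edge rooted diagram is asymptotically Gaussian with mean $\mu n$ and variance $\Theta(n)$, where $\mu=-\rho'(1)/\rho(1)>0$. A large-deviation bound (again from the quasi-powers framework, or from a direct Chernoff-type estimate on $[z^n]\vv{M}^+(z,u)$ for $u<1$ fixed) then shows that the proportion of diagrams with fewer than $cn$ copies, for any fixed $c<\mu$, is exponentially small. The same argument applies verbatim to $\vv{\mathcal{M}_1^+}$ with $R_A$ (the crossing pattern of $R_A$ being already fixed and minimal, so minimality is preserved by the pasting, using additivity of the crossing number) and to $\vv{\mathcal{M}_2^+}$ with $R_B$ (here one uses that $R_B$ is a diagram of the unknot, so pasting it corresponds to a connected sum with the unknot and does not change the link-type).

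\textbf{Main obstacle.} The delicate point is \emph{not} the analytic machinery — once the bivariate system with a catalytic variable is set up, Theorem~\ref{drmota} and the quasi-powers theorem do all the work — but rather justifying rigorously that marking \emph{disjoint} copies of $R_A$ (resp.\ $R_B$) is compatible with the recursive decomposition, i.e.\ that one can set up a system in $(z,u)$ whose coefficient $[z^n u^k]$ counts diagrams with (at least, or exactly) $k$ pairwise-disjoint copies. The subtlety is that $R_A$ could in principle appear ``straddling'' the boundary between two pieces of the decomposition rather than sitting inside a single pasted submap; here is where Lemma~\ref{corrected} (disjointness of appearances) together with the fact that $R_A$ is outercyclic and free in the class is essential — freeness guarantees that every appearance of $R_A$ occupies a face-set that is ``detachable'' and can be routed to a single pasting slot of the decomposition, so that the bijective correspondence between (diagram, choice of $cn$ disjoint copies) and (diagram with $cn$ marked pasting slots) goes through. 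I would therefore spend most of the write-up on this bijective/structural lemma and then quote \cite[Theorem 2]{bender1992submaps} (whose proof is exactly this scheme) to close the argument, noting as the paper does that the presence of a crossing structure $\sigma$ changes nothing because the pasting operation respects $\sigma$.
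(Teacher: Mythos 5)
Your overall strategy (a bivariate generating function marking copies of $R_A$, followed by quasi-powers or large-deviation estimates) is a legitimate general scheme, but it has a genuine gap exactly at the point you flag as delicate, and your proposed fix does not work. To write down a system in $(z,u)$ whose coefficient of $z^nu^k$ counts diagrams with $k$ pairwise disjoint copies of $R_A$, you would need every appearance of $R_A$ as a submap to be locatable inside a single ``pasting slot'' of the decomposition of Proposition~\ref{principal}. You justify this by appealing to freeness of $R_A$ in the class, but the paper explicitly observes (just before Lemma~\ref{relax}) that $R_A$ \emph{cannot} be free in any non-trivial link-diagram family because of $4$-regularity; only a relaxed ``$j$ gluings'' property holds, and that property says nothing about appearances of $R_A$ aligning with the recursive decomposition. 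An appearance of $R_A$ in a diagram need not arise from a pasted submap at all, so the decorated system cannot be obtained by simply $u$-weighting the factors $z+z^2\vv{M}$; without it, neither Theorem~\ref{drmota} applied to $\vv{M}(z,u)$ nor the quasi-powers machinery can be invoked. You also assert that the proof of \cite[Theorem 2]{bender1992submaps} ``is exactly this scheme,'' which it is not: that proof, and the paper's, is a one-sided double-counting argument, not a bivariate limit-law argument.

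The paper's actual proof avoids counting copies exactly. It takes the bad class $\mathcal{H}$ of diagrams with fewer than $cn$ copies of $R_A$, forms $\mathcal{Q}$ by optionally pasting $R_A$ on each non-root edge, so that $Q(z)=H(z+z^{40})$ and every resulting object still lies in $\mathcal{M}$, and uses the disjointness from Lemma~\ref{corrected} to bound the multiplicity of this construction by $\sum_{k\le cn}\binom{n}{cn}\le (e/c)^{cn}$. The radius-of-convergence relation $r(H)=F(r(Q))$ for $F(z)=z+z^{40}$ (Lemma 2 of \cite{bender1992submaps}) then yields $r(H)/r(\vv{M})>1$ for $c$ small enough, i.e.\ the bad class is exponentially sparse. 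The ingredients you do establish correctly --- that pasting $R_A$ (resp.\ $R_B$) preserves membership in each of the three classes, by additivity of the crossing number for $\vv{\mathcal{M}_1^+}$ and by connected sum with the unknot for $\vv{\mathcal{M}_2^+}$, and that Lemma~\ref{corrected} controls the overcounting --- are exactly what this simpler argument needs; rerouting your write-up through that double counting removes the need for the bivariate system entirely.
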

\begin{proof}
Let $\mathcal{H}$ be the class of objects in $\vv{\mathcal{M}}$ that contain less than $cn$ copies of $R_A$, where $c\leq \frac{1}{2}$, and $H(z)$ the corresponding generating function. Let $\mathcal{Q}$ be the class such that $Q\in \mathcal{Q}$ is constructed from a map in $\mathcal{H}$, where on each non-root edge one pastes or not $R_A$ in the canonical way (repetitions are allowed, that is, there might be several copies of $Q$ in $\mathcal{Q}$). Let $Q(z)=\sum _{n\geq 0}q_nz^n$ be the corresponding generating function, where size is again taken with respect to edges, not counting the root. Then $Q(z)=H(z+z^{40})$ (it is 40 and not 39, by the way we have defined the operation of pasting a map on an edge, in Section 5). Notice that all $Q\in\mathcal{Q}$ belong to $\mathcal{M}$, since the operation of pasting a map on an edge corresponds to a connected sum between the represented links.

Denote by $r(\cdot )$ the radius of convergence of some generating function. We will use the following lemma, proved in~\cite[Lemma 2]{bender1992submaps}.

\vspace{.3cm}
\textit{Lemma I:} If $F(z)\neq 0$ is a polynomial with non-negative coefficients and $F(0)=0$, $H(z)$ has a power series expansion with non-negative coefficients and $0<r(H)<\infty$, and $Q(z)=H(F(z))$, then $r(H)=F(r(Q))$.
\vspace{.3cm}

Since by Lemma~\ref{corrected} the copies of $R_A$ are always disjoint, every object in $\mathcal{H}$ can be repeated at most \[\sum _{k\leq cn}\binom{n}{cn}\leq \bigg(\frac{e}{c}\bigg)^{cn}\]
times. Let us call this number $t_n$. Then,
\[
\frac{1}{r(\vv{M})}\geq\limsup _{n\rightarrow \infty}\bigg(\frac{q_n}{t_n}\bigg)^{1/n}\geq \frac{\big(\frac{c}{e}\big)^{c}}{r(Q)}
\]
By Lemma I, it holds that $r(H)=r(Q)(1+r(Q)^{40})$. Consequently,
\[\frac{r(H)}{r(\vv{M})}\geq (1+r(Q)^{40}) \bigg(\frac{c}{e}\bigg)^c>1 \] for small enough $c$ and the statement follows for $\vv{\mathcal{M}}$.
Then it immediately follows for $\vv{\mathcal{M}^+}$ and the cases of $\vv{\mathcal{M}_1^+},\vv{\mathcal{M}_2^+}$ can be treated similarly.
\end{proof}
\noindent Notice that the map $R_A$ cannot satisfy freeness in any non-trivial link-diagram family, because of 4-regularity. The same would hold for any other map. Hence, to prove the final statement we need a relaxed version of Theorem~\ref{almostall}.
\begin{lemma}\label{relax}
Theorem~\ref{almostall} holds under a relaxed freeness condition, namely that $R_1$ can be glued to a hole where it initially belonged exactly $j>1$ times, for some constant $j$.
\end{lemma}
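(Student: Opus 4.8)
The plan is to revisit the proof of Theorem~\ref{almostall} in~\cite{richmond1995almost} and to isolate the single place where the \emph{freeness} of $R_1$ is actually used. That hypothesis enters only through a local \emph{switching} operation: given a map $M$ in the class that contains a copy $K$ of $R_1$, one deletes $K$ to obtain a map with a distinguished hole $h$ of degree $\deg(R_1)$, and then re-glues a copy of $R_1$ into $h$. Full freeness asserts that \emph{every} admissible re-gluing yields a map again in the class; but the counting argument only needs that, besides the tautological re-gluing which reproduces $M$, there is at least one \emph{different} admissible re-gluing that stays in the class, so that a genuine switch $M\rightsquigarrow M'\neq M$ is possible. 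The relaxed hypothesis provides exactly $j\ge 2$ admissible re-gluings of $R_1$ into each such hole, hence $j-1\ge 1$ non-trivial switches --- precisely what is required; this is also why the hypothesis must read $j>1$, as for $j=1$ no switch is available.

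With this observation, I would run the argument of~\cite{richmond1995almost} essentially verbatim, replacing everywhere ``the admissible re-gluings of $R_1$ into a hole'' (a set of size $\deg(R_1)$) by ``the $j$ admissible re-gluings''. The remaining ingredients are already in place: by Lemma~\ref{corrected} the copies of $R_1$ in a map of the class are pairwise disjoint (so switching at one copy neither destroys another copy nor changes which re-gluings are admissible there), by Lemma~\ref{brightest} all but an exponentially small fraction of the $n$-edge maps contain at least $\lfloor cn\rfloor$ such copies, and $R_1$ has no reflective symmetry in the plane. One then takes a map $M$ with a non-trivial automorphism, switches a suitable subfamily of its copies of $R_1$ (each to one of its $j-1$ non-tautological re-gluings), recording which copies were switched and the chosen re-gluing at each, and checks that this encodes $M$ together with roughly $cn$ independent bounded choices as a map of the same size in the class, from which $M$ and the extra data are recoverable up to a multiplicative factor at most $j^{O(n)}\,\mathrm{poly}(n)$. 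Since $j$ and $\deg(R_1)$ are fixed constants, all the exponential bases appearing in~\cite{richmond1995almost} are altered only by bounded factors, so the conclusion --- that the proportion of $n$-edge maps of the class with a non-trivial automorphism is exponentially small --- is untouched.

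I expect the only genuinely delicate point to be the bookkeeping behind the ``recovery up to a bounded factor'' step: after switching, one must still be able to \emph{detect} which copies of $R_1$ were moved in order to invert the encoding, and one must verify that the constant $j$ enters every relevant exponential quantity multiplicatively and never with the wrong sign. The detectability again rests on the asymmetry of $R_1$ and on the disjointness of its occurrences (Lemma~\ref{corrected}): a switched copy is still a genuine copy of $R_1$ and cannot, as a subconfiguration, be confused with the surrounding structure. Granting this, the inequality relating maps with a non-trivial automorphism to all maps of size $n$ goes through with the same exponential gap, and the lemma follows. This is the form in which Theorem~\ref{almostall} will be applied in the next section, where $4$-regularity forces $R_A$ (and likewise $R_B$) to re-glue into its holes in only a bounded number $j>1$ of ways.
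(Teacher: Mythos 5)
You have correctly located where freeness enters (only through the count of admissible re-gluings of $R_1$ into each hole) and correctly observed that $j>1$ is the essential requirement, but the mechanism you sketch is not the one that makes the argument work, and as written it has a real gap. You propose a switching/encoding scheme: switch some of the $\approx cn$ copies of $R_1$ in a symmetric map $M$, record the switch data, and recover $M$ ``up to a multiplicative factor at most $j^{O(n)}\,\mathrm{poly}(n)$.'' That unspecified $j^{O(n)}$ loss is exactly the size of the gain you are trying to realize, so nothing is proved unless you control the exponent; and controlling it requires an idea you never state, namely \emph{why} each map produced by switching has only exponentially fewer symmetric preimages than the number of switch choices. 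The detectability of switched copies, which you flag as the delicate point, is not the crux.

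The paper's proof (following Richmond--Wormald) does not switch at all; it is an orbit-counting argument. Cut out all copies of $R_1$ from $M$ to get $H$ with $k(H)\geq cn$ holes, so that $|C(H)|=j^{k(H)}$ maps are obtained by re-gluing. Any non-trivial automorphism $\sigma$ of a map in $C(H)$ restricts to an automorphism $\tau$ of $\hat H$ permuting the holes; since $\tau$ has at most two fixed points, the holes fall into at most $k(H)/2+1$ orbits, and a symmetric map compatible with $\tau$ is determined by its gluing on one hole per orbit. Hence at most $j^{k(H)/2+1}$ of the $j^{k(H)}$ maps in $C(H)$ are symmetric via $\tau$; summing over the at most $4n$ choices of $\tau$ gives $S(\mathcal{C}_n)/|\mathcal{C}_n|\leq 4n\,j^{1-cn/2}$. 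This use of the induced automorphism of $H$ to force correlations between the gluings at distinct holes is the step your sketch is missing; without it (or an equivalent injectivity bound for your encoding), the proposal does not close. Note also that the hypothesis is that $R_1$ re-glues in \emph{exactly} $j$ ways into every hole, which is what makes $|C(H)|=j^{k(H)}$ exact; your weaker ``at least one non-tautological re-gluing'' would need a matching upper bound on the number of gluings per hole to run the same count.
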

\begin{proof}
We will sketch the proof of Theorem~\ref{almostall}, adapted in the stated relaxed condition.

Let $\mathcal{C}_n$ be the class of $n$-sized rooted maps of $\mathcal{C}$ that contain at least $cn$ copies of $R_1$. For $M\in C_n$, let $H(M)$ (or just $H$ when appropriate) be the map obtained by cutting out all the copies of $R_1$ (apart from the case where the copy contains the root internally) and by $C(H)$ the set of maps obtained after pasting them back in all $j$ available ways. (Notice that in our regular maps that contain crossing information, $H$ might not belong to the original class. However, all elements in $C(H)$ belong there.) Then $|C(H)|=j^{k(H)}$, where $k(H)$ is the number of $R_1$-holes in $H$. Let $C$ be the set of all such rooted maps $H$ and, for a rooted map $R$, denote by $\hat{R}$ its unrooted version. For an automorphism of $\hat{M}\in C(H)$, denote by $h(\sigma)$ the automorphism restricted in $\hat{H}$, and by $m(\tau )$, where $\tau$ is an automorphism of $\hat{H}$, the set of maps $\hat{M}\in C(H)$ that admit an automorphism $\sigma$ such that $h(\sigma)=\tau$. Observe that any such $\tau$ has at most two fixed points, hence there are at most $k(H)/2+1$ orbits in total. Then a map in $m(\tau)$ is uniquely determined by the way $R_1$ is pasted in $H$, in one face from each of these orbits. This implies that $|m(\tau )|\leq j^{k(H)/2+1}$. Let $S(\mathcal{C}_n)$ be the number of maps in $\mathcal{C}_n$ that have a non-trivial automorphism and $H(\mathcal{C}_n)=\{H(M)|M\in \mathcal{C}_n\}$. Then the following holds:
\begin{align*}
 S(C_n)&\leq \sum\limits_{\substack{H\in H(\mathcal{C}_n) \\ \tau \in\mathrm{Aut}( \hat{H}) }}m(\tau) \leq  \sum\limits_{\substack{H\in H(\mathcal{C}_n) \\ \tau \in\mathrm{Aut}( \hat{H}) }}j^{k(H)/2+1}=\sum\limits_{\substack{H\in H(\mathcal{C}_n) \\ \tau \in\mathrm{Aut}( \hat{H}) }}\frac{|C(H)|}{j^{k(H)/2-1}}\\
&\leq \sum\limits_{\substack{H\in H(\mathcal{C}_n)}}\frac{4n|C(H)|}{j^{k(H)/2-1}}\leq \sum\limits_{\substack{H\in C}}\frac{4n|C(H)|}{j^{cn/2-1}}=\frac{4nC_n}{j^{cn/2-1}},
\end{align*}
where the last equality follows by the disjointness of the sets $C(H)$. Hence, \[\frac{S(C_n)}{C_n}\leq \frac{4n}{j^{cn/2-1}}.\]\end{proof}

\begin{theorem}\label{dissolved}
The proportion of objects in $\mathcal{M}_n,$ $(\mathcal{M}_1)_n,$ $(\mathcal{M}_2)_n$ that is symmetric is exponentially small.
\end{theorem}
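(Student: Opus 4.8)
The plan is to reduce the statement to the rooted setting, where the relaxed Richmond--Wormald criterion of Lemma~\ref{relax} applies, and then transfer back by the usual orbit-counting dictionary between rooted and unrooted maps. Recall that a link-diagram with $n$ vertices is a connected $4$-regular map with $2n$ edges and hence $4n$ corners, and that the group of $\sigma$-preserving map automorphisms acts freely on these corners; therefore every unrooted diagram $D$ of size $n$ in the class under consideration lifts to exactly $4n/|\mathrm{Aut}(D)|$ rooted diagrams, so that $|\vv{\mathcal{M}}^+_n|\le 4n\,|\mathcal{M}_n|$, and likewise for $\mathcal{M}_1,\mathcal{M}_2$. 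Writing $S_n$ for the number of \emph{symmetric} unrooted diagrams of size $n$, each such diagram admits at least one rooting, so $S_n$ is at most the number of rooted diagrams of size $n$ whose underlying unrooted diagram is symmetric. Hence it suffices to show that this last quantity is an exponentially small fraction of $|\vv{\mathcal{M}}^+_n|$ (resp. of $|(\vv{\mathcal{M}}_1^+)_n|$, $|(\vv{\mathcal{M}}_2^+)_n|$): combined with $|\vv{\mathcal{M}}^+_n|\le 4n\,|\mathcal{M}_n|$ this yields $S_n/|\mathcal{M}_n|\le 4n\cdot\varepsilon^n\to 0$ geometrically, the denominators being of genuine exponential order by Theorems~\ref{collector},~\ref{terrorize},~\ref{necessary}.

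Next I would verify the hypotheses of Lemma~\ref{relax} with $R_1=R_A$ for $\vv{\mathcal{M}}^+$ and $\vv{\mathcal{M}}_1^+$, and with $R_1=R_B$ for $\vv{\mathcal{M}}_2^+$. Outer-cyclicity of $R_A$ and $R_B$ is read off from Figure~\ref{anspricht}; the absence of a reflective planar symmetry preserving the unbounded face holds because the crossing pattern carried by $R_A$ (resp. $R_B$) was chosen precisely to destroy any such symmetry of the underlying map; pairwise disjointness of the appearances of $R_A$ (resp. $R_B$) inside diagrams of the class is Lemma~\ref{corrected}; and ubiquity — all but an exponentially small fraction of $n$-edged diagrams contain at least $cn$ disjoint copies — is Lemma~\ref{brightest}. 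The remaining ingredient is the relaxed freeness condition, and here I would invoke the dictionary of Section~\ref{extirpate}: excising a copy of $R_A$ (resp. $R_B$) and gluing it back into the $R$-hole it occupied is exactly a connected sum with the trivial tangle that $R_A$ (resp. $R_B$) carries. Since $K_4$-minor-freeness is preserved under $2$-edge-sums, since minimality is preserved by the additivity of the crossing number of connected sums of torus links (so a copy of the minimal diagram $R_A$ reinserted in a minimal diagram keeps it minimal), and since a connected sum with the unknot is again the unknot, every map produced by such a regluing again lies in the relevant class; moreover $4$-regularity forces the $R$-hole to have exactly the fixed degree of the outer face of $R_A$ (resp. $R_B$), so the number of admissible regluings is a constant $j$ depending only on $R_A$ (resp. $R_B$) and not on $R$, which is precisely the relaxed freeness hypothesis of Lemma~\ref{relax}.

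With all hypotheses in place, Lemma~\ref{relax} gives that the proportion of $n$-edged rooted diagrams in each of $\vv{\mathcal{M}}^+$, $\vv{\mathcal{M}}_1^+$, $\vv{\mathcal{M}}_2^+$ with a non-trivial automorphism is $O(n\,j^{-cn/2})$, hence exponentially small; plugging this into the transfer inequality of the first paragraph completes the argument. I expect the main obstacle to be the verification of the relaxed freeness condition — namely checking that \emph{every} one of the finitely many ways of reinserting $R_A$ or $R_B$ into its hole yields a map still belonging to the class (equivalently, that all three diagram classes are closed under connected sum with the specific trivial tangles $R_A$ and $R_B$ carry, crossing-pattern bookkeeping included), and confirming that the face degrees involved are bounded so that $j$ is a true constant rather than a quantity growing with $n$; the rest is a routine combination of Lemmas~\ref{corrected},~\ref{brightest},~\ref{relax} with Theorem~\ref{almostall}.
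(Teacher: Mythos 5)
Your proposal is correct and follows essentially the same route as the paper: verify for $R_A$ and $R_B$ the hypotheses of Theorem~\ref{almostall} (no reflective symmetry, disjoint appearances via Lemma~\ref{corrected}, ubiquity via Lemma~\ref{brightest}) and replace freeness by the relaxed condition of Lemma~\ref{relax}, where $4$-regularity makes the number of regluings a constant ($j=2$ in the paper). The only difference is that you spell out explicitly the rooted-to-unrooted transfer and the closure of each class under regluing, both of which the paper leaves implicit.
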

\begin{proof}
The maps $R_A,R_B$ have no reflective symmetry in the plane.
Morever, they always appear disjointly in their respective classes, by Lemma~\ref{corrected}, and they are ubiquitous, by Lemma~\ref{brightest}. The only condition that is missing to apply Theorem~\ref{almostall} is freeness, since in our case there are exactly two distinct gluings of them: the identity and the reflection around the vertical axis, because of the 4-regularity. By Lemma~\ref{relax}, we can relax the freeness condition and conclude the proof.
\end{proof}

\noindent We can now get the final asymptotic result for the link-diagram families under study:

\begin{corollary}\label{final-asymptotic}
The class of connected $K_4$-free link-diagrams $\mathcal{M}$ satisfies (for $n$ even):
$$[z^n]M(z)\sim  \frac{1}{2n}\cdot\frac{cn^{-3/2}}{\Gamma (-1/2)}\cdot\rho ^{-n}\cdot2^{n/2},\,\, \rho \approx 0.31184,\, c_1\approx -3.04531$$
The class of connected  $K_4$-free minimal link-diagrams, $\mathcal{M}_1$, and the class of $K_4$-free link-diagrams of the unknot, $\mathcal{M}_2$, satisfy (for $n$ even):
$$[z^n]M_1(z)\sim \frac{1}{2n}\cdot \frac{c_1n^{-3/2}}{\Gamma (-1/2)}\cdot\rho_1 ^{-n},\,\,  \rho_1 \approx 0.41456,\, c_1\approx -1.62846,$$
$$[z^n]M_2(z)\sim  \frac{1}{2n}\cdot\frac{c_2n^{-3/2}}{\Gamma (-1/2)}\cdot\rho_2 ^{-n},\,\,  \rho_2 \approx 0.23626,\, c_2\approx -3.39943.$$
\end{corollary}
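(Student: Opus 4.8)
The plan is to run the classical rooted-to-unrooted transfer for map families. We already have, from Theorems~\ref{collector}, \ref{terrorize} and~\ref{necessary}, the asymptotics of the rooted generating functions $\vv{M}^{+}$, $\vv{M_1}^{+}$ and $\vv{M_2}^{+}$, each of the form $C\,n^{-3/2}\kappa^{\,n}$ (with the extra $2^{n/2}$ factor in the case of $\vv{M}^{+}$), and from Theorem~\ref{dissolved} that in each of the three unrooted families the symmetric diagrams form an exponentially small fraction. Combining these two facts is all that is needed.

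First I would record the orbit-counting identity. With the rooting of Section~\ref{extirpate} (choice of a directed edge), an automorphism of a link-diagram that fixes a directed edge is trivial, so $\mathrm{Aut}(D)$ acts freely on the directed edges of $D$; hence a link-diagram $D$ of size $n$ yields exactly $2n/|\mathrm{Aut}(D)|$ pairwise non-isomorphic rooted diagrams. Writing $\mathcal{M}_n$ for the unrooted $K_4$-minor-free link-diagrams of size $n$ and $\mathcal{M}_n^{\mathrm{sym}}\subseteq\mathcal{M}_n$ for those with a non-trivial automorphism,
\[
[z^n]\vv{M}^{+}(z)=\sum_{D\in\mathcal{M}_n}\frac{2n}{|\mathrm{Aut}(D)|}=2n\,|\mathcal{M}_n|-\sum_{D\in\mathcal{M}_n^{\mathrm{sym}}}2n\left(1-\frac{1}{|\mathrm{Aut}(D)|}\right).
\]
The first equality yields $|\mathcal{M}_n|\ge \frac{1}{2n}[z^n]\vv{M}^{+}(z)$, and, since each term of the last sum lies in $[0,2n)$, the identity also gives $2n\,|\mathcal{M}_n|\le [z^n]\vv{M}^{+}(z)+2n\,|\mathcal{M}_n^{\mathrm{sym}}|$.

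Next I would feed in Theorem~\ref{dissolved}: there is $d<1$ with $|\mathcal{M}_n^{\mathrm{sym}}|\le d^{\,n}|\mathcal{M}_n|$ for all large $n$, whence $2n\,|\mathcal{M}_n|(1-d^{\,n})\le [z^n]\vv{M}^{+}(z)$. Together with the lower bound this sandwiches
\[
\frac{1}{2n}\,[z^n]\vv{M}^{+}(z)\le |\mathcal{M}_n|\le \frac{1}{2n(1-d^{\,n})}\,[z^n]\vv{M}^{+}(z),
\]
so $|\mathcal{M}_n|\sim \frac{1}{2n}[z^n]\vv{M}^{+}(z)$. Substituting the estimate from Theorem~\ref{collector} (which already carries the $2^{n/2}$ coming from the crossing structure and is supported on even $n$) gives the stated formula for $[z^n]M(z)$. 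Repeating the argument verbatim with Theorem~\ref{terrorize} and $\vv{M_1}^{+}$, and with Theorem~\ref{necessary} and $\vv{M_2}^{+}$ — the hypotheses of Theorem~\ref{dissolved} being available for all three families via $R_A$ (for $\mathcal{M},\mathcal{M}_1$) and $R_B$ (for $\mathcal{M}_2$) — yields the estimates for $[z^n]M_1(z)$ and $[z^n]M_2(z)$.

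The only substantive input is Theorem~\ref{dissolved}, i.e.\ the exponential scarcity of symmetric diagrams; everything else is the elementary squeeze above, and I do not expect a genuine obstacle. The sole bookkeeping point is to note that $\frac{1}{2n}[z^n]\vv{M}^{+}(z)$ and its two analogues are genuinely of positive exponential order and supported on the correct parity of $n$, so that the asymptotic equivalences are not vacuous; this is immediate from the defining systems in Sections~\ref{extirpate}, \ref{according} and~\ref{unknot}.
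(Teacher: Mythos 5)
Your proposal is correct and follows essentially the same route as the paper: the corollary is obtained by dividing the rooted counts of Theorems~\ref{collector}, \ref{terrorize} and~\ref{necessary} by the $2n$ possible rootings, which is legitimate precisely because Theorem~\ref{dissolved} makes the symmetric diagrams (the only ones with fewer than $2n$ distinct rootings) exponentially negligible. Your explicit orbit-counting sandwich just spells out the step the paper leaves implicit.
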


\section{Open problems}

In this paper we made a first step in the enumeration of knot diagrams, starting with $K_{4}$-minor free graphs. Some possible directions for further research are the following.
\begin{itemize}
\item  In Subsection~\ref{unknot} we enumerated the $K_{4}$-minor free
link-diagrams of the unknot. It is interesting to extend this direction to other types of knots, further than  the unknot,  such as the $(2,q)$-torus link for different values of  $q$.
\item It is an open challenge to go further than the $K_{4}$-minor-free link-diagrams. A first candidate in this direction is to consider graph classes where other minors are excluded,
such as planar graphs of bounded treewidth ($K_{4}$-minor free graphs are exactly
those of treewidth at most 2). A first step in this direction is to look for a structural
characterization of the $4$-regular planar graphs with treewidth at most three (in analogy to Theorem~\ref{acceptance}).
\end{itemize}

\noindent{\bf Acknowledgement:} The authors thank the anonymous referees for a detailed reading of the manuscript and for providing useful comments, as well as pointing out minor oversights and improvements in the presentation. We also thank Carlo Beenakker and the MathOverflow community for pointing out an important simplification in the proof of Theorem \ref{elsewhere}. The second author wishes to thank \href{https://sidiropo.people.uic.edu}{Anastasios Sidiropoulos} for early discussions on knots, during \href{https://www.mfo.de/occasion/1542}{MFO Workshop 1542 on Computational Geometric and Algebraic Topology}, that inspired this research.


\begin{thebibliography}{10}

\bibitem{adams2015knot}
C.~Adams, T.~Crawford, B.~DeMeo, M.~Landry, A.~T. Lin, M.~Montee, S.~Park,
  S.~Venkatesh, and F.~Yhee.
\newblock Knot projections with a single multi-crossing.
\newblock {\em Journal of Knot Theory and Its Ramifications}, 24(03):1550011,
  2015.

\bibitem{bender1992submaps}
E.~A. Bender, Z.-C. Gao, and L.~B. Richmond.
\newblock Submaps of maps. i. general 0--1 laws.
\newblock {\em Journal of Combinatorial Theory, Series B}, 55(1):104--117,
  1992.

\bibitem{bergeron1998combinatorial}
F.~Bergeron, G.~Labelle, and P.~Leroux.
\newblock {\em Combinatorial Species and Tree-like Structures}.
\newblock Encyclopedia of Mathematics and its Applications. Cambridge
  University Press, 1997.

\bibitem{bodirsky2007enumeration}
M.~Bodirsky, O.~Gim{\'e}nez, M.~Kang, and M.~Noy.
\newblock Enumeration and limit laws for series--parallel graphs.
\newblock {\em European Journal of Combinatorics}, 28(8):2091--2105, 2007.

\bibitem{bodlaender1998partial}
H.~L. Bodlaender.
\newblock A partial $k$-arboretum of graphs with bounded treewidth.
\newblock {\em Theoretical computer science}, 209(1-2):1--45, 1998.

\bibitem{brandstadt1999graph}
A.~Brandst\"{a}dt, V.~B. Le, and J.~P. Spinrad.
\newblock {\em Graph Classes: A Survey}.
\newblock Society for Industrial and Applied Mathematics, Philadelphia, PA,
  USA, 1999.

\bibitem{buck1994random}
G.~R. Buck.
\newblock Random knots and energy: Elementary considerations.
\newblock {\em Journal of Knot Theory and its Ramifications}, 3(03):355--363,
  1994.

\bibitem{calvo2005physical}
J.~A. Calvo.
\newblock {\em Physical and numerical models in knot theory: including
  applications to the life sciences}, volume~36.
\newblock World Scientific, 2005.

\bibitem{chang2017untangling}
H.-C. Chang and J.~Erickson.
\newblock Untangling planar curves.
\newblock {\em Discrete \& Computational Geometry}, 58(4):889--920, 2017.

\bibitem{chapman2017asymptotic}
H.~Chapman.
\newblock Asymptotic laws for random knot diagrams.
\newblock {\em Journal of Physics A: Mathematical and Theoretical},
  50(22):225001, 2017.

\bibitem{cromwell2004knots}
P.~R. Cromwell.
\newblock {\em Knots and Links}.
\newblock Cambridge University Press, 2004.

\bibitem{diao2004additivity}
Y.~Diao.
\newblock The additivity of crossing numbers.
\newblock {\em Journal of Knot Theory and its Ramifications}, 13(07):857--866,
  2004.

\bibitem{diao1994random}
Y.~Diao, N.~Pippenger, and D.~W. Sumners.
\newblock On random knots.
\newblock In {\em Random Knotting and Linking}, pages 187--197. World
  Scientific, 1994.

\bibitem{drmota1997systems}
M.~Drmota.
\newblock Systems of functional equations.
\newblock {\em Random Structures and Algorithms}, 10(1-2):103--124, 1997.

\bibitem{even2016invariants}
C.~Even-Zohar, J.~Hass, N.~Linial, and T.~Nowik.
\newblock Invariants of random knots and links.
\newblock {\em Discrete \& Computational Geometry}, 56(2):274--314, 2016.

\bibitem{flajolet2009analytic}
P.~Flajolet and R.~Sedgewick.
\newblock {\em Analytic Combinatorics}.
\newblock Cambridge University press, 2009.

\bibitem{harary6graph}
F.~Harary.
\newblock {\em Graph theory}.
\newblock Addison-Wesley, 1969.

\bibitem{kawauchi2012survey}
A.~Kawauchi.
\newblock {\em A survey of knot theory}.
\newblock Birkh{\"a}user, 2012.

\bibitem{Kotesovec2015}
V. Kot\v{e}\v{s}ovec.
\newblock A method of finding the aysmptotics of $q$-series based on the convolution of generating functions.
\newblock Preprint. Available on-line at \texttt{arXiv:1509.08708}.

\bibitem{knuth1989concrete}
D.~E. Knuth, R.~L. Graham, and O.~Patashnik.
\newblock Concrete mathematics.
\newblock {\em Adison Wesley}, 1989.

\bibitem{kunz2001asymptotic}
S.~Kunz-Jacques and G.~Schaeffer.
\newblock The asymptotic number of prime alternating links.
\newblock In {\em Formal Power Series and Algebraic Combinatorics-FPSAC'2001}.

\bibitem{medina2017number}
C.~Medina, J.~Ram{\'\i}rez-Alfons{\'\i}n, and G.~Salazar.
\newblock On the number of unknot diagrams.
\newblock {\em SIAM Journal on Discrete Mathematics}, 33(1):306--326, 2019.

\bibitem{menasco2005handbook}
W.~Menasco and M.~Thistlethwaite.
\newblock {\em Handbook of knot theory}.
\newblock Elsevier, 2005.

\bibitem{murasugi2007knot}
K.~Murasugi.
\newblock {\em Knot theory and its applications}.
\newblock Springer Science \& Business Media, 2007.

\bibitem{noy2019enumeration}
M.~Noy, C.~Requil{\'e}, and J.~Ru{\'e}.
\newblock Enumeration of labelled 4-regular planar graphs.
\newblock {\em Proceedings of the London Mathematical Society},
  119(2):358--378, 2019.

\bibitem{Reidemeister1927}
K.~Reidemeister.
\newblock Elementare begr{\"u}ndung der knotentheorie.
\newblock {\em Abhandlungen aus dem Mathematischen Seminar der Universit{\"a}t
  Hamburg}, 5(1):24--32, Dec 1927.

\bibitem{richmond1995almost}
L.~B. Richmond and N.~C. Wormald.
\newblock Almost all maps are asymmetric.
\newblock {\em Journal of Combinatorial Theory, Series B}, 63(1):1--7, 1995.

\bibitem{sundberg1998rate}
C.~Sundberg and M.~Thistlethwaite.
\newblock The rate of growth of the number of prime alternating links and
  tangles.
\newblock {\em Pacific Journal of Mathematics}, 182(2):329--358, 1998.

\end{thebibliography}

\end{document}